\setlist[enumerate]{leftmargin=.5in}
\setlist[itemize]{leftmargin=.5in}
\crefname{hypothesis}{Hypothesis}{Hypotheses}
\title{Log-concave Density Estimation with Independent Components\thanks{Submitted \today.
\funding{Elina Robeva was supported by an NSERC Discovery Grant DGECR-2020-00338.}}}
\author{Sharvaj Kubal\thanks{Department of Mathematics, University of British Columbia, Vancouver, Canada (\email{erobeva@math.ubc.ca})}
\and Christian Campbell\footnotemark[2]
\and Elina Robeva\footnotemark[2]}
\newcommand{\bR}{\mathbb{R}}
\newcommand{\bE}{\mathbb{E}}
\newcommand{\norm}[1]{\|#1\|}
\newcommand{\normbig}[1]{\left\|#1\right\|}
\newcommand{\op}[1]{\|#1\|_{\mathrm{op}}}
\newcommand{\fro}[1]{\|#1\|_{F}}
\newcommand{\eigmat}{\mathbf{W}}
\newcommand{\Rpl}{\mathbf{R}}
\newcommand{\VarZ}{\mathbf{\Sigma}_Z}
\newcommand{\VarX}{\mathbf{\Sigma}}
\newcommand{\argmax}{\mathop{\arg \max}}
\begin{document}

\maketitle

% REQUIRED
\begin{abstract}
We propose a method for estimating a log-concave density on $\mathbb R^d$ from samples, under the assumption that there exists an orthogonal transformation that makes the components of the random vector independent. While log-concave density estimation is hard both computationally and statistically, the independent components assumption alleviates both issues, while still maintaining a large non-parametric class. We prove that under mild conditions, at most $\tilde{\mathcal{O}}_{d}(\epsilon^{-4})$ samples (suppressing constants and log factors) suffice for our proposed estimator to be within $\epsilon$ of the original density in squared Hellinger distance. On the computational front, while the usual log-concave maximum likelihood estimate can be obtained via a finite-dimensional convex program, it is slow to compute -- especially in higher dimensions. We demonstrate through numerical experiments that our estimator can be computed efficiently, making it more practical to use.
\end{abstract}

% REQUIRED
\begin{keywords}
density estimation, log-concave densities, sample complexity
\end{keywords}

% REQUIRED
% \begin{MSCcodes}
% 62G07, 62H12, 62H25
% \end{MSCcodes}

\section{Introduction}
\label{sec: intro}
A log-concave probability density $p$ on $\bR^d$ is of the form $p(x) = e^{-\varphi(x)}$ for a closed, proper, convex function $\varphi: \bR^d \to (-\infty, +\infty]$.  
Log-concave density estimation is a type of non-parametric density estimation dating back to the work of \citet{walther2002detecting}.  
The class of log-concave distributions includes many known parametric families, has attractive statistical properties, and admits maximum likelihood estimates (MLE) that can be computed with established optimization algorithms \cite{samworth2018recent}. Moreover, the MLE is `automatic' in that no tuning parameters (such as kernel bandwidths) need to be chosen -- a feature that can be especially useful in higher dimensions. 
However, log-concave density estimation suffers from the curse of dimensionality -- one requires $n \gtrsim _d\epsilon^{-\frac{d+1}2}$ samples for the log-concave MLE in $\mathbb R^d$ to be $\epsilon$ close to the true density in squared Hellinger distance~\added{\cite{kim2016global}}. Furthermore, the log-concave MLE is too slow to compute for large values of $d$ (e.g., $d\geq 6$ and $n\geq 5000$).

We reduce the family of log-concave densities by assuming that the components of the random vector at hand become independent after a suitable orthogonal transformation is applied {-- an \emph{independent components} assumption}. While the family that we study is somewhat restrictive, it includes all multivariate normal distributions, and is still non-parametric. {Furthermore, the independent components assumption provides a link with independent component analysis (ICA) models, see \citet{samworth2012independent}}.
We propose an easy-to-compute estimator for this family of densities and show that the sample complexity {rate} is no longer exponential in the ambient dimension $d$: with {at most} {$n \gtrsim_d \epsilon^{-4}$} samples (up to constants and log factors), the estimate is within $\epsilon$ of the true density in squared Hellinger distance. 

\subsection{Prior work}
%-- General log-concave overview
Shape-constrained density estimation has a long history. Classes of shape-constrained densities that have previously been studied include non-increasing~\cite{grenander_theory_1956}, convex~\cite{groeneboom2001estimation}, $k$-monotone~\cite{balabdaoui2007estimation}, $s$-concave~\cite{doss2016global} and quasi-concave densities~\cite{koenker2010quasi}. The log-concave class of densities was first studied in dimension 1 by \citet{walther2002detecting}, and a concise description of the log-concave MLE in higher dimensions was given by \citet{cule2010maximum}. 
{Various statistical properties of the procedure were established by \citet{schuhmacher2010consistency}, \citet{carpenter2018near},  \citet{kur2019optimality}, and \citet{barberLocalContinuityLogconcave2021} amongst others.}
Log-concavity has also gained popularity in applications~\cite{bagnoli2005log, samworth2018recent}. 

Subclasses of log-concave densities have also been studied in the recent years for the purposes of reducing the computational and statistical complexities of estimation, and also because they are important on their own. In the 1-dimensional setting, \citet{Kim2018Adaptation} study the set of log-concave densities that are exponentials of concave functions defined by at most $k$ linear hyperplanes. In this case, they show that the statistical rate becomes parametric. {\citet{xuHighdimensionalNonparametricDensity2021} consider log-concave densities whose super-level sets are scalar multiples of a fixed convex body in $\mathbb{R}^d$, and establish univariate rates for estimation.}
%-- Log-concave MTP2
\citet{robeva2021maximum} consider maximum likelihood estimation for totally positive and log-concave densities, which correspond to random vectors whose components are highly positively correlated. They prove existence and uniqueness of the MLE and propose an algorithm for computing it, although the statistical rates for this estimator are still unknown.
Next, \citet{logconcave} study the family of log-concave densities that factorize according to an undirected graphical model, also providing existence and uniqueness results for the MLE (when the graph is given and chordal) together with an algorithm. Sample complexity rates for this model have not been studied either.

Here, we study the subfamily of log-concave densities on $\bR^d$ obtained by transforming product densities through orthogonal maps. These correspond to random vectors whose coordinates become independent post some orthogonal transformation. The most similar family to ours was considered by \citet{samworth2012independent}, who study the class of log-concave densities of random vectors whose coordinates become independent after a linear transformation (rather than just an orthogonal transformation). They show that the projection (in a Kullback-Leibler (KL) sense) of a (not necessarily log-concave) distribution satisfying the above independent components property, to the set of log-concave distributions, still satisfies the independent components property with the same linear transformation. They also provide a method to perform density estimation in this setting, although their algorithm does not guarantee convergence to a global optimum. On the statistical front, they prove consistency; to the best of our knowledge however, finite sample rates have not yet been established in this setting. By simplifying the model here, we are able to provide a very fast algorithm together with sample complexity upper bounds {displaying univariate rates}.
% that do not grow exponentially with the dimension. 

Towards applicability to real data for tasks such as clustering, we show that our estimator can be used in conjunction with the Expectation-Maximization (EM) algorithm \cite{dempster1977maximum} to estimate mixtures of densities from our subfamily -- this is a {semiparametric} generalization of Gaussian mixtures. A similar application was proposed in \citet{cule2010maximum}, where they show that mixtures of general log-concave densities can work better than Gaussian mixtures. 
% A particular example is the Wisconsin breast cancer dataset~\cite{misc_breast_cancer_wisconsin_(diagnostic)_17}. 
However, estimating a mixture of general log-concave densities can be very costly, particularly in higher dimensions. Plugging our proposed estimator instead, into the EM algorithm, yields a much faster computation.

%#########################

\subsection{Problem}
\label{subsec: problem}

{
Suppose $X$ is a zero-mean random vector in $\bR^d$, with a (probability) density $p:\bR^d \to [0, \infty)$. We say that $X$ (or $p$) satisfies the \emph{orthogonal independent components} property if there exists an orthogonal matrix $\mathbf{W} \in \bR^{d \times d}$ such that the components of $Z = \mathbf{W}X$ are independent.
In this case, the joint density of $Z$ can be expressed as the product
\begin{align}
f(z)=\prod_{i=1}^{d} f_i(z_i), \label{eq: density of Z}
\end{align}
where each $f_i:\bR \to [0, \infty)$ is a univariate density. We focus mainly on the case where $p$ is log-concave, which is equivalent in this setting to requiring log-concavity of $f_1, \dots, f_d$. Our goal is to efficiently estimate $p$ from independent and identically distributed (iid) samples, given that it satisfies the orthogonal independent components property with some unknown $\eigmat$.

By writing $X = \eigmat^{-1} Z$, we can interpret this setting as an independent component analysis (ICA) model, where $Z$ is the vector of independent sources and $\eigmat$ is the so-called \emph{unmixing matrix}. 
Although mixing matrices in ICA do not need to be orthogonal to begin with, a pre-whitening procedure \cite{comon2010handbook} can transform the problem to such an orthogonal form.

% Our goal is to estimate the``$d$-dimensional" density $p$ of $X$ from independent and identically distributed (iid) samples.

Instead of estimating the ``$d$-dimensional" density $p$ directly, we would like to leverage the underlying structure provided by~(\ref{eq: density of Z}), which allows us to simplify $p$ (using a change-of-variables formula) as 
\begin{align}
    p(x) = \prod_{i=1}^{d} f_i\left( [\eigmat x]_i \right) = \prod_{i=1}^{d} f_i(w_i \cdot x). \label{eq: density of X} 
\end{align}
Here, $w_1, \dots, w_d \in \mathbb{S}^{d-1}$ are the rows of $\eigmat$, and we refer to them as \emph{independent directions} of $p$. Note that the model $Z=\eigmat X$ and the factorization in~(\ref{eq: density of X}) are invariant under permuting the rows of $\eigmat$ and the corresponding components of $Z$, as well as under flipping their respective signs.
}

If $\eigmat$ is known beforehand -- if it is provided by an \textit{oracle} -- the estimation problem is simplified, and one can use the following procedure:
\begin{enumerate}
    \item {For each $i=1, \dots, d$, estimate the univariate density $f_i$ (say via log-concave maximum likelihood estimation~\cite{samworth2018recent}) using the samples $\{Z_i^{(\ell)} = w_i \cdot X^{(\ell)}: \ell \in [N] \}$, where $X^{(1)}, \dots, X^{(N)} \overset{\mathrm{iid}}{\sim} p$. Denote the estimated density by $\hat{f_i}$.}
    \item Plug-in the estimated densities into~(\ref{eq: density of X}) to obtain the oracle-informed estimate of $p$:
    \begin{align}
        \hat{p}_{\text{oracle}}(x) = \prod_{i=1}^{d} \hat{f_i}(w_i \cdot x).
    \label{eq: oracle estimator def}
    \end{align}
\end{enumerate}

In reality, the unmixing matrix $\eigmat$ is unknown, and one needs to estimate it.
If the covariance matrix of $X$ has distinct, well-separated eigenvalues, then this can be done by Principal Component Analysis (PCA), which involves diagonalizing the sample covariance matrix of $X$. Otherwise, {certain ICA algorithms may be used to estimate $\eigmat$ provided some non-Gaussianity requirements are satisfied~\cite{auddyLargeDimensionalIndependent2023, fourier_pca}.} 
% This is important for pre-whitened ICA in particular, since the covariance matrix resulting from a pre-whitening procedure has all eigenvalues equal to 1.
We refrain from using the same samples for the unmixing matrix estimation and the density estimation steps. Instead, we split the samples into two subsets, $\{ X^{(1)}, \dots, X^{(N)} \}$ and $\{ Y^{(1)}, \dots, Y^{(M)} \}$, and use different subsets for the two purposes in order to prevent potentially problematic cross-interactions. 
% Further justification for this split and its role in our proof can be found in \Cref{sec: set-up and main result} and \ref{prf: estimating estimated marginals}.  
% {\color{red} LINK!}

Denote the (PCA or {ICA}) estimate of $\eigmat$ by $\hat{\eigmat}$. {We always require $\hat{\eigmat}$ to be an orthogonal matrix.} Given enough samples, the rows $\hat{w}_1, \dots, \hat{w}_d \in \mathbb{S}^{d-1}$ of $\hat{\eigmat}$ can be shown to be close to those of $\eigmat$ up to permutations and sign-flips. More precisely, there exists a permutation $\sigma:[d] \to [d]$ and signs $\theta_i \in \{-1,+1\}$ such that for some threshold $\epsilon>0$
\begin{align*}
    \norm{\hat{w_i} - \theta_i w_{\sigma(i)}}_2 \leq \epsilon, \quad \quad i \in [d].
\end{align*}
As the factorization in (\ref{eq: density of X}) is invariant under such permutations and sign-flips, one can simply relabel $w_i \leftarrow \theta_i w_{\sigma(i)}$ and $Z_i \leftarrow \theta_i Z_{\sigma(i)}$ to proceed with the analysis. 

From $\hat{\eigmat}$, we can follow a procedure analogous to that used to obtain the oracle-informed estimate of $p$. Namely, we use the estimated independent directions $\hat{w}_i \in \bR^d$ and the projected samples onto each such direction to produce {univariate} estimates $\hat p_{\hat w_i}$; then we take the product of these estimates over $i$ (see \Cref{def: proposed estimator}). 

\subsection{Summary of contributions}

The main contribution of this work is an algorithm for density estimation in the setting of \Cref{subsec: problem}, for which we prove finite sample guarantees and demonstrate numerical performance in terms of estimation errors and runtimes. The proposed algorithm is recorded as \Cref{alg: main algo}. 

The theoretical analysis in \Cref{sec: analysis} yields sample complexity bounds for our algorithm {under various assumptions, which are listed in \Cref{tab: table N} and \Cref{tab: table M}}. Briefly, if the true density $p$ {is log-concave} and satisfies some moment assumptions in addition to the {orthogonal} independent components property, then it can be estimated efficiently. In particular, {$\tilde{\mathcal{O}}_{d}(\epsilon^{-4})$ samples (up to constants and log factors)} are sufficient for our proposed estimator $\hat{p}$ to be within $\epsilon$ of $p$ in squared Hellinger distance with probability 0.99. {If the density $p$ has some additional smoothness, the rate can be improved up to $\epsilon^{-2}$}.
% The value of $\varrho$ depends on the particular smoothness assumption being evoked, but is nevertheless bounded above by 4 regardless of the dimension $d$. 

Since we referred to smoothness, a quick comparison with kernel density estimators (KDE) is in order. Recall that in dimension $d$, a KDE typically requires the true density $p$ to have ``smoothness of order $d$" in order to achieve a dimension-independent statistical rate. More precisely, for $p$ in the Sobolev space $\mathcal{W}^{s,1}(\bR^d)$ with $s\geq 1$, achieving an expected total variation error of $\epsilon$ requires $n \gtrsim_d \epsilon^{-\frac{d+2s}{s}}$ samples~\cite{holmstrom1992asymptotic} so that one needs $s \gtrsim d$ to counter the curse of dimensionality. On the contrary, our smoothness assumptions (see \nameref{para: S1} and \nameref{para: S2}) are ``order one" in that we only require control on the first-order (and if available, the second-order) derivatives.

Computational experiments are presented in \Cref{sec: comp exp}, where we demonstrate improved estimation errors (in squared Hellinger distance) and improved runtimes as compared to usual log-concave MLE. A key consequence of our fast algorithm is easy scalability to higher dimensions -- we are able to estimate densities in $d=30$ within no more than a few seconds.

\section{Set-up and algorithms}
\label{sec: set-up and main result}

First, we establish some notation. For a random vector (e.g. $X$), the subscript labels the components of the vector in $\bR^d$, whereas the superscript labels the iid samples (or copies). {Consider a (probability) density $p:\bR^d \to [0,\infty)$, and suppose $X \sim p$. Given a unit vector $s\in \mathbb{S}^{d-1}$, define the $s$-marginal $p_s:\bR \to [0,\infty)$ to be the density of $s \cdot X$}. 
{Additionally, for any (measurable) map $T: \bR^d \to \bR^d$, define the pushforward $T_{\sharp} p$ to be the law of $T(X)$. For an invertible linear map $\mathbf{A}: \bR^d \to \bR^d$, the pushforward $\mathbf{A}_{\sharp}p$ has a density $|\mathrm{det}(\mathbf{A}^{-1})|\, p ( \mathbf{A}^{-1}(x))$. As described in \Cref{sec: intro}, we have the relationship $Z=\eigmat X$ for which one can write $\mathrm{Law}(Z) = \eigmat_{\sharp} p$ by identifying $\eigmat$ with the linear map it induces}. 
% \deleted{similarly, define $\hat{Z} := \hat{\eigmat} X$}.

% As described in \Cref{sec: intro}, $Z=\eigmat X$; similarly, define $\hat{Z} := \hat{\eigmat} X$. \added{By identifying matrices with the linear maps they induce, one can write $\mathrm{Law}(Z) = \eigmat_{\sharp} p$.}
For any positive integer $n$, define $[n]:= \{1,2,\dots,n\}$. We denote by $\norm{\cdot}_2$ the Euclidean norm on $\bR^d$, and by $\op{\cdot}$ and $\norm{\cdot}_F$ respectively the operator norm and the Frobenius norm on $\bR^{d \times d}$. The squared Hellinger distance between densities $p, q: \bR^d \to [0,\infty)$ is defined as $h_d^2(p,q) = (1/2)\int_{\bR^d} (\sqrt{q(x)} - \sqrt{p(x)})^2 dx = 1 - \int_{\bR^d} \sqrt{q(x)p(x)} dx$. Note that $(p,q)\mapsto h_d(p,q) = (h_d^2(p,q))^{1/2}$ is a metric on the space of densities. {In various proofs, we also use the Wasserstein-1 distance, written $W_1(\cdot, \cdot)$.}
% We will also need the 1-Wasserstein distance between $p$ and $q$, defined as $W_1(p,q) = \inf\{\bE_{(X,Y)\sim \gamma}:  \gamma \textrm{ is a coupling between }p, q\}$. 

{Finally, the symbols $c, C, C'$ etc. stand for absolute (universal) constants that do not depend on any parameters of the problem. Also, their values may be readjusted as required in different statements. We also use constants $K_d, K_d'$ etc. that depend only on the dimension $d$.}
% for two quantities $a$ and $b$, $a \lesssim b$ means that $a \leq Cb$ for some absolute constant $C>0$ having no dependence on any parameters of the problem (including the dimension $d$). 

Recall the zero-mean density $p$, satisfying the orthogonal independent components property, and consider the split-up independent samples $\{ X^{(1)}, \dots, X^{(N)} \}$ and $\{ Y^{(1)}, \dots, Y^{(M)} \}$ as introduced in \Cref{sec: intro}. Our procedure for estimating $p$ consists of two stages: (1) use the samples $\{ Y^{(1)}, \dots, Y^{(M)} \}$ to estimate {an} unmixing matrix $\eigmat$, and (2) use the samples $\{ X^{(1)}, \dots, X^{(N)} \}$ to estimate the marginal densities of $p$ in the directions given by the (estimated) rows of $\eigmat$. We expand on these below, starting with the unmixing matrix estimation.

For $X \sim p$ ({with finite second moments}), consider the covariance matrix $\VarX = \bE XX^T$. Since the components of $Z = \eigmat X$ are independent, the covariance of $Z$
\begin{align*}
    \VarZ = \bE ZZ^T = \bE \eigmat XX^T \eigmat^T = \eigmat \VarX \eigmat^T
\end{align*}
is diagonal. Hence, the rows of $\eigmat$ are the (normalized) eigenvectors of $\VarX$, and PCA can be used to estimate these eigenvectors under the following non-degeneracy condition:
\vspace{0.1in}
\paragraph{Moment assumption M1}\label{para: M1}
The density $p$ has finite second moments, and the eigenvalues of the covariance matrix $\VarX = \bE_{X\sim p} XX^T$ are separated from each other by at least $\delta>0$.
\vspace{0.1in}
{Heteroscedastic multivariate Gaussians $\mathcal{N}(0, \mathbf{\Sigma})$, where $\mathbf{\Sigma}$ has well-separated eigenvalues, satisfy M1, and so do uniform distributions supported on rectangles with distinct side lengths.}

From samples $Y^{(1)}, \dots, Y^{(M)} \overset{\mathrm{iid}}{\sim} p$, one can compute the empirical covariance matrix 
\begin{align}
    \hat{\VarX} := \frac{1}{M} \sum_{j=1}^M Y^{(j)}{Y^{(j)}}^T,
    \label{eq: sample-VarX}
\end{align}
which is symmetric and positive semi-definite. Diagonalizing it yields the PCA-based estimate of $\eigmat$.

\begin{definition}[PCA-based estimate of $\eigmat$]
Let $Y^{(1)}, \dots, Y^{(M)} \overset{\mathrm{iid}}{\sim}p$, and $\hat{\VarX}$ defined as in (\ref{eq: sample-VarX}). Denote by $\hat{w}_1, \dots, \hat{w}_d$ a set of orthonormal eigenvectors of $\hat{\VarX}$. Then, the estimate $\hat{\eigmat}$ is defined to be the orthogonal matrix whose rows are $\hat{w}_1, \dots, \hat{w}_d$. 
% (\red{Note that it is defined only up to row permutations}).
\end{definition}
\begin{remark}
    Note that $\hat{\eigmat}$ above is defined only up to permutations and sign-flips of the rows. This is consistent, nevertheless, with our earlier discussion on the invariance of the model with respect to these transformations. 
\end{remark}
By the concentration of $\hat{\VarX}$ about $\VarX$, and the Davis-Kahan theorem~{\cite{yuUsefulVariantDavis2015}}, the $\hat{w}_i$'s can be shown to be good estimates of $w_i$, provided $\delta$ is not too small (see \Cref{res: estimating W by PCA}).

Now consider the situation where the eigenvalues of $\VarX$ are poorly separated, or possibly degenerate. PCA may fail to recover the independent directions in this case. Consider, for example, the extreme case of an isotropic uniform distribution on a cube in $\bR^d$ with $\VarX = \mathbf{I}$. Here, any $w \in \mathbb{S}^{d-1}$ is an eigenvector of $\VarX$, whereas the independent directions are only the $d$ axes of the cube.
When $\VarX$ has such degenerate eigenvalues, one needs to use higher moments to infer the unmixing matrix $\eigmat$, {which is the key principle behind ICA.} 
% {\color{blue} We consider the algorithm of \citet{auddyLargeDimensionalIndependent2023} in this case.} \deleted{An effective method with a polynomial sample complexity is Fourier-PCA \cite{fourier_pca}, which requires the following assumptions on $Z=\eigmat X$.}
% \paragraph{\deleted{Moment assumption M2}}
% \deleted{For each $i\in [d]$, assume that the fourth moment $\bE Z_i^4 \leq \mu_4$ for some $\mu_4 >0$ and that there exists a $k_i \in \mathbb{N}$ (chosen as small as possible) such that the cumulant $|\mathrm{cum}_{k_i}(Z_i)| \geq \Delta > 0$.  Defining $k = \max_i k_i$, further assume that the corresponding $k$-th moments $\bE |Z_i|^k \leq \mu_k$ for some $\mu_k>0$.}

% \deleted{Note that the assumption on the cumulants requires, in particular, that $Z$ be non-Gaussian as is standard in ICA settings. Since $\eigmat$ is a square matrix, the simpler version of Fourier PCA based on diagonalizing a re-weighted covariance matrix suffices -- see \cite{fourier_pca}. The algorithm takes as input the samples $Y^{(1)}, \dots, Y^{(M)} \overset{\mathrm{iid}}{\sim}p$ and the parameters $\Delta, k$ and $\mu_k$ from \\
% \nameref{para: M2}, and outputs an estimate $\hat{\eigmat}$.}

{
Recall our model $X = \mathbf{W}^{-1}Z$, where we think of $Z$ as the latent variables. As it is common in the ICA literature to have the latent variables `standardized' to be unit-variance, we define $S := \mathbf{\Sigma}_{Z}^{-1/2}Z$. This lets us rewrite $X = \mathbf{W}^{-1}Z = \mathbf{W}^{-1}\mathbf{\Sigma}_{Z}^{1/2} S = \mathbf{A}S$, where the new mixing matrix is $\mathbf{A}=\mathbf{W}^{-1}\mathbf{\Sigma}_{Z}^{1/2}$. By orthogonality of $\mathbf{W}$ and the diagonal nature of $\mathbf{\Sigma}_{Z}$, the columns of $\mathbf{A}$ are just scaled versions of the original independent directions $w_{1}, \dots, w_{d}$.

All ICA methods require some form of non-Gaussianity; following \citet{auddyLargeDimensionalIndependent2023}, we require the following assumptions on $S$ and $\mathbf{A}$.
\vspace{0.1in}
\paragraph{Moment assumption M2}\label{para: M2}

For each $i\in [d]$, assume that $\mu_4^{-1} \leq |\bE S_i^4 - 3| \leq \mu_4$ and $\bE |S_i|^9 \leq \mu_9$ for some $\mu_4, \mu_9 >0$. Additionally, assume that the condition number $\mathrm{cond}(\mathbf{A}) \leq \kappa$ for some $\kappa \geq 1$. 
% \vspace{0.1in}
\begin{remark}
    Note that since $S = \VarZ^{-1/2}Z$, one could alternatively frame \nameref{para: M2} in terms of $Z$. In particular, we have that $\mathrm{cond}(\mathbf{A}) = \mathrm{cond}(\VarZ)^{1/2}$.
\end{remark}
}

{
\citet{auddyLargeDimensionalIndependent2023} propose a variant of the FastICA algorithm~\cite{hyvarinen2000independent} that produces an estimate $\hat{\mathbf{A}}$ of $\mathbf{A}$ achieving the optimal sample complexity for computationally tractable procedures. To recover an estimate $\hat{\mathbf{W}}$ of $\mathbf{W}$, we first rescale the columns of $\hat{\mathbf{A}}$ to unit norm, and form a matrix $\tilde{\mathbf{A}}$ from these rescaled columns. To account for the possible non-orthogonality of the columns of $\tilde{\mathbf{A}}$, we compute the singular value decomposition $\tilde{\mathbf{A}}=\mathbf{U}\mathbf{D}\mathbf{V}^T$, and set $\hat{\mathbf{W}}:= \mathbf{V}\mathbf{U}^T$.

Isotropic uniform and exponential densities, for example, satisfy \nameref{para: M2}, but Gaussians do not due to the non-zero kurtosis requirement. But recall that heteroscedastic Gaussians do satisfy \nameref{para: M1}. If the moment assumptions discussed here are too restrictive, one could use other algorithms such as Fourier PCA~\cite{fourier_pca}, which require weaker notions of non-Gaussianity. Our theory in the following sections is agnostic of the specific procedure used to estimate $\eigmat$, and depends only on the estimation error.
}

Given $\hat{\eigmat}$, and hence the estimated independent directions $\hat{w}_1, \dots, \hat{w}_d \in \mathbb{S}^{d-1}$, consider the second stage of the estimation procedure now -- estimating the marginal densities of $p$. Take the samples $X^{(1)}, \dots, X^{(N)} \overset{\mathrm{iid}}{\sim}p$, which are independent {of} $Y^{(1)}, \dots, Y^{(M)}$, and project them as
\begin{align}
    \hat{Z}_{i}^{(j)} = \hat{w}_i \cdot X^{(j)}.
\end{align}
Conditional on $\hat{w}_1, \dots, \hat{w}_d$, it holds that $\hat{Z}_{i}^{(1)}, \dots, \hat{Z}_{i}^{(N)} \overset{\mathrm{iid}}{\sim}p_{\hat{w}_i}$ (almost surely in $\hat{w}_i$), which follows from the independence between $\hat{w}_i$ and $X^{(j)}$.
{Univariate density estimation} can then be used to estimate each marginal $p_{\hat{w}_i}$ using the samples $\hat{Z}_{i}^{(1)}, \dots, \hat{Z}_{i}^{(N)}$. {When $p$ is log-concave in particular (which is the focus of this work), we use univariate log-concave MLE~\cite{walther2002detecting}.}
This procedure finally yields our proposed ``plug-in" estimator.

\begin{definition}[Proposed estimator]
\label{def: proposed estimator}
% \deleted{Let $X^{(1)}, \dots, X^{(N)} \overset{\mathrm{iid}}{\sim}p$ be independent from \\$Y^{(1)}, \dots, Y^{(M)}$.} For \added{inferred independent directions} $\hat{w}_1, \dots, \hat{w}_d$ \deleted{estimated computed as above}, 
{Given estimated orthogonal independent directions \\$\hat{w}_1, \dots, \hat{w}_d \in \mathbb{S}^{d-1}$ computed from $Y^{(1)}, \dots, Y^{(M)} \overset{\mathrm{iid}}{\sim}p$, and univariate density estimates $\hat{p}_{\hat{w}_1}, \dots, \hat{p}_{\hat{w}_d}$ computed from $X^{(1)}, \dots, X^{(N)} \overset{\mathrm{iid}}{\sim}p$,} define the estimator
\begin{align}
    \hat{p}(x) = \prod_{i=1}^d \hat{p}_{\hat{w}_i}(\hat{w}_i \cdot x). \label{eq: proposed estimate}
\end{align}
% where each  $\hat{p}_{\hat{w}_i}$ for $i=1, \dots, d$ is \deleted{the log-concave MLE} \added{a univariate density estimate} of $p_{\hat{w}_i}$ (conditional on $\hat{w}_i$).
\end{definition}

Note that the proposed estimate is just the oracle-informed estimate with $w_i$ replaced by $\hat{w}_i$ (since $f_i = \text{Law}(Z_i) = p_{w_i}$). Further note that if we did not split the samples to conduct covariance estimation and density estimation separately, the law of $\hat{Z}_i$ conditional on $\hat{w}_i$ would no longer be $p_{\hat{w}_i}$.
% ; as a result, this proposed estimator would be somewhat ill-justified.

We summarize the above discussion as \Cref{alg: main algo}. Recall that we had assumed $p$ to be zero-mean in our set-up; in the general case, one requires an additional centering step which we include {in \Cref{alg: main algo}}.

\begin{algorithm}[!h]
    \caption{Proposed estimator}
    \begin{algorithmic}[1]
        \STATE Compute the empirical mean $\hat{\mu}$ and subtract it from the samples.
        \STATE Split the (centered) samples into two sets: $\{ X^{(1)}, \dots, X^{(N)}\}$ and $\{ Y^{(1)}, \dots, Y^{(M)}\}$.
        \STATE {$\hat{\eigmat} := \mathrm{EstimateUnmixingMatrix}(Y^{(1)}, \dots, Y^{(M)})$}
        \STATE Compute $\hat{Z}^{(j)} := \hat{\eigmat}X^{(j)}$ for $j = 1, \dots, N$.
        \FOR{$i \in \{1 , \dots, d\}$}
        \STATE {$\hat{p}_{\hat{w}_i} := \mathrm{EstimateUnivariateDensity}(\hat{Z}_{i}^{(1)}, \dots, \hat{Z}_{i}^{(N)})$ \label{alglin: logconcavemle}}
        \ENDFOR
        \STATE Compute $\hat{p}(x) := \prod_{i=1}^d \hat{p}_{\hat{w}_i}(\hat{w}_i \cdot x)$, where $\hat{w}_i$ is the $i$-th row of $\hat{\eigmat}$.
        \STATE Re-introduce the mean $\hat \mu$ to output $\hat p_{\hat \mu}(x) = \hat p(x - \hat \mu)$.
    \end{algorithmic}
    \label{alg: main algo}
\end{algorithm}

Notice that we have not specified the sample splitting proportion between $M$ and $N$. This is a free parameter in our method, and can be optimized based on the statistical rates obtained in \Cref{sec: analysis}, or based on numerical tests as explored in \Cref{subsec: split-r-results}. An equal split with $M = N$ seems to generally work well. {Interestingly, we observe empirically that \textit{not} splitting the samples, i.e. reusing the same samples for the two stages, can also work very well, and even beat the splitting version.}

{Finally, we have kept the estimation procedures for the unmixing matrix and the marginals unspecified in \Cref{def: proposed estimator} and \Cref{alg: main algo} for the sake of generality. The framework of our analysis allows for flexibility in the choice of these procedures. However, we do focus mainly on PCA, ICA, and log-concave MLE in the following sections, and refer to \Cref{alg: main algo} as the \emph{log-concave independent components} estimator (LC-IC) in that case.
}
In particular, we use the \texttt{R} package \texttt{logcondens} \cite{logcondensref} for univariate log-concave MLE in Line \ref{alglin: logconcavemle} of \Cref{alg: main algo}. This package plays an important role in the computational speed of our method.

\section{Analysis and sample complexities}
\label{sec: analysis}
This section is organized as follows. {\Cref{subsec: basic error bounds} presents basic error bounds and a generic sample complexity result, where the estimators for the unmixing matrix and the marginals are left unspecified. \Cref{subsec: univariate density estimation} and \Cref{sec: bounds for estimation of eigmat} then consider specific estimators to provide concrete sample complexities. \Cref{subsec: additional assumptions} derives stronger bounds under additional assumptions, and collects the various sample complexities. Finally, \Cref{subsec: misspecification} considers the case of misspecified log-concavity.}
% presents the sample complexity of estimating the unmixing matrix, and \Cref{subsec: stability} provides a stability analysis under two different smoothness assumptions, aiming to quantify the effect of unmixing matrix estimation error on the proposed estimator. Finally,  \Cref{subsec: corr sample complexities} gives the sample complexities of the proposed estimators as corollaries of the preceding analysis.

\subsection{Error bounds and generic sample complexities}
\label{subsec: basic error bounds}
% We would like to assess the sample complexity of the proposed estimator, as well as its performance relative to the oracle-informed estimator. 
% Throughout \Cref{subsec: basic error bounds}, let $p$ be a probability density with mean zero, satisfying the \added{orthogonal} independent components property (\ref{eq: density of X}) with independent directions $w_1, \dots, w_d$.
We consider the oracle-informed estimator first. {The analysis involved is also applicable to the proposed estimator.}
\begin{lemma}[Error bound on the oracle estimator]
\label{lemma: h oracle}
{Suppose $p$ is a zero-mean probability density on $\mathbb{R}^d$, satisfying the orthogonal independent components property with independent directions $w_{1}, \dots, w_{d}$.} Let $\hat{p}_{\mathrm{oracle}}$ be the oracle-informed estimator as defined in (\ref{eq: oracle estimator def}). We have the bound
    $$h_d^2(\hat{p}_{\mathrm{oracle}}, p) = 1 - \prod_{i=1}^d (1-h_1^2(\hat{p}_{w_i}, p_{w_i})) \leq d \, \max_{i\in [d]} h_1^2(\hat{p}_{w_i}, p_{w_i}).$$
\end{lemma}

\begin{proof}
    A direct calculation yields
    \begin{align*}
        1 - h_d^2(\hat{p}_{\text{oracle}}, p) &= \int_{\bR^d} \prod_{i=1}^d \sqrt{\hat{p}_{w_i}(w_i \cdot x)\, p_{w_i}(w_i \cdot x)} dx \\
        &= \prod_{i=1}^d \int_{\bR} \sqrt{\hat{p}_{w_i}(z_i)\, p_{w_i}(z_i)} dz_i = \prod_{i=1}^d \left(1 - h_1^2(\hat{p}_{w_i}, p_{w_i})\right).
    \end{align*}
    Now define $b = \max_{i\in [d]} h_1^2(\hat{p}_{w_i}, p_{w_i})$ and note that $b\in [0,1]$. Bernoulli's inequality gives that $\prod_{i=1}^d (1 - h_1^2(\hat{p}_{w_i}, p_{w_i})) \geq (1-b)^d \geq 1-d\,b$.
\end{proof}

\Cref{lemma: h oracle} says that the problem of bounding $h_d^2(\hat{p}_{\text{oracle}}, p)$ boils down to controlling the {univariate} estimation errors $h_1^2(\hat{p}_{w_i}, p_{w_i})$. For log-concave densities, \citet{kim2016global} and \citet{ carpenter2018near} have already established results of this kind. 
{
We stay in a general setting for now, by deriving the sample complexity of the oracle estimator from the sample complexity of any univariate density estimation procedure that it invokes.

\begin{definition}[Sample complexity of a univariate density estimator]
\label{def: sample complexity density estimation}
% Let $\mathcal{F}_1$ be a subclass of univariate densities $f:\mathbb{R}\to [0, \infty)$, and let $\hat{f}$ be an estimator of $f \in \mathcal{F}_{1}$ computed from iid samples. 
% Given $\epsilon> 0$ and $0 < \gamma < 1$, define the sample complexity $N^* = N^*(\epsilon, \gamma; \mathcal{F}_{1})$ of $\hat{f}$ to be the minimal positive integer such that for any $f \in \mathcal{F}_{1}$, and $N \geq N^*$ iid samples from $f$, we get 
Let $f:\mathbb{R} \to [0,\infty)$ be a univariate probability density, and let $\hat{f}$ be an estimator of $f$ computed from iid samples. For $\epsilon> 0$ and $0 < \gamma < 1$, define the sample complexity $N^* = N^*(\epsilon, \gamma; f)$ of $\hat{f}$ be be the minimal positive integer such that $N \geq N^*$ iid samples from $f$ are sufficient to give
\begin{equation*}
\mathbb{P} \left(  h_{1}^2(\hat{f}, f) \leq \epsilon \right) \geq 1-\gamma.
\end{equation*}
\end{definition}

% Using this definition, it is straightforward to obtain the sample complexity of the oracle estimator.

% Now suppose that the marginals $p_{w_{1}}, \dots, p_{w_d}$ of $p$ belong to a subclass $\mathcal{F}_{1}$, and the univariate density estimation step of the oracle estimator has sample complexity $N^*(\epsilon, \gamma; \mathcal{F}_{1})$. 
Now, we can immediately express the sample complexity of the oracle estimator in terms of $N^*$ as a consequence of \Cref{lemma: h oracle} and a union bound.

\begin{corollary}[Sample complexity of the oracle-informed estimator] \label{thm: oracle risk}
% Suppose the marginals $p_{w_{1}}, \dots, p_{w_d}$ of $p$ belong to some subclass $\mathcal{F}_{1}$ of univariate densities.
Let $p$ be a zero-mean probability density on $\mathbb{R}^d$, satisfying the orthogonal independent components property. Let $\hat{p}_{\mathrm{oracle}}$ be the oracle-informed estimator computed from samples $X^{(1)}, \dots, X^{(N)} \overset{\mathrm{iid}}{\sim} p$.
Then, for any $\epsilon > 0$ and $0 < \gamma < 1$, we have  
\begin{align*}
    h_d^2(\hat{p}_{\mathrm{oracle}}, p) \leq \epsilon
\end{align*}
with probability at least $1-\gamma$, whenever
\begin{align*}
    N \geq  \sup_{i \in [d]} N^*(\epsilon/d, \gamma/d; p_{w_i}).
\end{align*}
Here, $N^*$ is the sample complexity of the univariate density estimator invoked by $\hat{p}_{\mathrm{oracle}}$.
\end{corollary}
}

% \begin{proof}
% Define $\tilde{\epsilon} = \epsilon/d$ and $\tilde{\gamma} = \gamma/d$, so that 
% $$N \gtrsim \frac{1}{\tilde{\epsilon}^2} \log^6 \frac{1}{\tilde{\epsilon}\tilde{\gamma}}$$
% Then, using Theorem 7 of \citet{carpenter2018near} as applicable to the 1-dimensional log-concave MLE $\hat{p}_{w_i}$ of $p_{w_i}$, computed from samples $Z_i^{(j)} = w_i \cdot X^{(j)}$ for $j = 1, \dots, N$, we get that $h_1^2(\hat{p}_{w_i}, p_{w_i}) \leq \tilde{\epsilon}$ holds with probability at least $1-\tilde{\gamma}$ for any given $i\in [d]$. By a union bound, $\max_{i\in [d]} h_1^2(\hat{p}_{w_i}, p_{w_i}) \leq \tilde{\epsilon}$ with probability at least $1-\tilde{\gamma}d = 1-\gamma$. Finally, by \Cref{lemma: h oracle},
% $$h_d^2(\hat{p}_{\mathrm{oracle}}, p) \leq \tilde{\epsilon}d = \epsilon.$$
% as desired.
% \end{proof}

Having concluded the analysis of the oracle-informed estimator, we can now turn to the general setting. Here, we only have an estimate of $\eigmat$, and hence, the estimation error needs to be taken into account. This contributes additional terms to the risk of the proposed estimator, as the theorem below shows. The proof can be found in \Cref{prf: basic error bound}.

\begin{theorem}[Error bound on the proposed estimator]
{Suppose $p$ is a zero-mean probability density on $\mathbb{R}^d$, satisfying the orthogonal independent components property with independent directions $w_{1}, \dots, w_{d}$.} Let $\hat{p}$ be the proposed estimator (\Cref{def: proposed estimator}). We have the error bound
\label{res: basic error bound}

\begin{align*}
    h_d(\hat{p}, p) &\leq \sqrt{d} \, \max_{i\in [d]} h_1(\hat{p}_{\hat{w}_i}, p_{\hat{w}_i}) 
    + {\sqrt{d}\,h_d\left(\hat{\mathbf{R}}_{\sharp}p \, , \, p \right) + h_d\left(\hat{\mathbf{R}}^T_{\sharp}p \, , \, p \right)},
\end{align*}
where $\hat{\mathbf{R}} = \eigmat^T \hat{\eigmat}$.
    
\end{theorem}

{Here, $\hat{\Rpl}$ is an orthogonal matrix, because $\eigmat$ and $\hat{\eigmat}$ are.}
The term $\max_{i\in [d]} h_1(\hat{p}_{\hat{w}_i}, p_{\hat{w}_i})$ in the bound of \Cref{res: basic error bound} warrants an analysis similar to the oracle-informed estimator. 
Since $\hat{p}_{\hat{w}_i}$ is {a univariate density estimate} of $p_{\hat{w}_i}$ conditional on $\hat{w}_i$, 
bounds on {univariate density estimation} control this term, when applied in a conditional sense.

{
The second and third terms in the bound of \Cref{res: basic error bound} call for a notion of \emph{stability}; $h_d(\hat{\mathbf{R}}_{\sharp}p, p)$ measures how far the `rotated' density $\hat{\mathbf{R}}_{\sharp}p$ gets from the original density $p$. If Hellinger distance is stable with respect to small rotations of the density, then $h_d(\hat{\mathbf{R}}_{\sharp}p, p)$ can be bounded effectively in terms of the operator norm $\norm{\mathbf{I} - \hat{\mathbf{R}}}_{\mathrm{op}}$, where $\mathbf{I}$ is the $d\times d$ identity matrix. This operator norm is then well-controlled by the unmixing matrix estimation error as
\begin{align}
\label{eq: op norm w closeness}
    \op{\mathbf{I} - \hat{\Rpl}} &= \op{\eigmat - \hat{\eigmat}} \leq \norm{\eigmat - \hat{\eigmat}}_{F} = \sqrt{\textstyle{\sum_{i=1}^d \norm{\hat{w}_i - w_i}_2^2}} \leq \sqrt{d}\,\max_{i \in [d]} \norm{\hat{w}_i - w_i}_2.
\end{align}
The next lemma demonstrates such stability for zero-mean log-concave densities. 

\begin{lemma}[Hellinger stability]
\label{res: hellinger stability general log-concave}
Let $p$ be a zero-mean log-concave density on $\mathbb{R}^d$, and let $\mathbf{A} \in\mathbb{R}^{d \times d}$ be any invertible matrix. Then,
\begin{equation*}
h_{d}^2(p, \mathbf{A}_{\sharp}p) \leq K_{d} \,\mathrm{cond}(\mathbf{\Sigma})^{1/4}\,\op{\mathbf{I}-\mathbf{A}}^{1/2},
\end{equation*}
where $\mathbf{\Sigma}$ is the covariance matrix of $p$, and the constant $K_{d}>0$ depends only on the dimension $d$.

\end{lemma}
The proof involves the log-concave projection of \citet{dumbgen_ApproximationLogConcaveDistributions_2011}, particularly making use of its local H{\"o}lder continuity~\cite{barberLocalContinuityLogconcave2021}. It is delayed to \Cref{prf: hellinger stability general log-concave}.
Note that \Cref{res: hellinger stability general log-concave} is quite general, as it only uses log-concavity of $p$ (with no additional smoothness requirements).
However, \Cref{res: hellinger stability general log-concave} inherits a constant $K_d$ that typically depends at least exponentially in $d$. Also, the exponent $1/2$ of $\op{\mathbf{I}-\mathbf{A}}$ contributes a $1/\epsilon^4$ dependence to our sample complexity bounds (see the first two rows of \Cref{tab: table M}). 
{Later in \Cref{subsec: additional assumptions}}, we show that smoothness assumptions on $p$ can give stronger bounds.
}

{
The last bit to consider is the unmixing matrix estimation error.
In the same spirit as the discussion on univariate density estimation above, we consider a generic estimator for now, with sample complexity defined as below.

\begin{definition}[Sample complexity of an unmixing matrix estimator]
\label{def: sample complexity unmixing estimation}
Let $p$ be a zero-mean probability density on $\mathbb{R}^d$ satisfying the orthogonal independent components property. Let $\hat{\mathbf{W}}$ be an estimator of an unmixing matrix $\mathbf{W}$ of $p$, computed from iid samples. For $\epsilon> 0$ and $0 < \gamma < 1$, define the sample complexity $M^* = M^*(\epsilon, \gamma; p)$ of $\hat{\mathbf{W}}$ to be the minimal positive integer such that $M \geq M^*$ iid samples from $p$ are sufficient to give
$$
\mathbb{P}\left\{ \textstyle{\left( d^{-1} \sum_{i=1}^d \|\hat{w}_{i}-w_{i}\|_{2}^{2}\right)^{1/2}} \leq \epsilon \right\}  \geq 1-\gamma,
$$
for some set $\{w_{1},\dots,w_{d}\}$ of independent directions of $p$.
% Let $\mathcal{F}_{d}$ be a subclass of zero-mean densities $p$ on $\mathbb{R}^{d}$ satisfying the orthogonal independent components property, and let $\hat{\mathbf{W}}$ be an estimator of the unmixing matrix of $p \in \mathcal{F}_{d}$, computed from iid samples. Given $\epsilon >0$ and $0 < \gamma<1$, define the sample complexity $M^*=M^*(\epsilon, \gamma; \mathcal{F}_{d})$ of $\hat{\mathbf{W}}$ to be the minimal positive integer such that for any $p \in \mathcal{F}_{d}$ with unmixing matrix $\mathbf{W}$, and $M \geq M^*$ iid samples from $p$, we get
% \begin{equation*}
% \mathbb{P}\left( \max_{i \in [d]} \|\hat{w}_{i}-w_{i}\|_{2} \leq \epsilon \right) \geq 1-\gamma.
% \end{equation*}
\end{definition}

Now, we state the main result of this section, which expresses the sample complexity of the proposed estimator (\Cref{def: proposed estimator}) in terms of the sample complexities in \Cref{def: sample complexity density estimation} and \Cref{def: sample complexity unmixing estimation}. The proof is presented in \Cref{prf: generic sample complexity proposed}.
% building upon \Cref{def: sample complexity density estimation}, \Cref{def: sample complexity unmixing estimation}, \Cref{res: basic error bound} and \Cref{res: hellinger stability general log-concave}.

\begin{theorem}[Generic sample complexity of the proposed estimator]
\label{res: generic sample complexity proposed}
Let $p$ be a zero-mean log-concave density on $\mathbb{R}^d$, satisfying the orthogonal independent components property. Let $\hat{p}$ be the proposed estimator computed from samples $X^{(1)}, \dots, X^{(N)}, Y^{(1)}, \dots, Y^{(M)} \overset{\mathrm{iid}}{\sim}p$. Then, for any $\epsilon>0$ and $0 < \gamma<1$, we have 
\begin{equation*}
h_{d}^2(\hat{p},p) \leq \epsilon
\end{equation*}
with probability at least $1-\gamma$ whenever
\begin{subequations}
\begin{equation}
N \geq \sup_{s \in \mathbb{S}^{d-1}}N^*\left( \epsilon /4d,\, \gamma /2d;\, p_{s} \right),
\label{eq: N generic}
\end{equation}
and
\begin{equation}
M \geq M^*\left( (K_{d}'\,\mathrm{cond}(\mathbf{\Sigma}))^{-1/2}\epsilon^2,\, \gamma /2;\, p \right).
\label{eq: M generic}
\end{equation}
\end{subequations}
Here, $N^*$ and $M^*$ are respectively the sample complexities of the univariate density estimator and the unmixing matrix estimator invoked by $\hat{p}$. As before, $\mathbf{\Sigma}$ is the covariance matrix of $p$ and $K_{d}' >0$ is a constant that depends only on $d$.

\end{theorem}

To obtain concrete sample complexities, one needs to choose specific estimation procedures. In \Cref{subsec: univariate density estimation}, we consider univariate log-concave MLE and its boosted variant, and provide corresponding bounds on $N^*$. In \Cref{sec: bounds for estimation of eigmat}, we present standard bounds on $M^*$ for PCA- and ICA-based estimators of $\eigmat$.
}

% \begin{lemma}[Marginals of the proposed estimator]
% Let $p$, $\hat{p}$, and $\hat{w}_i$ as before.
% For any $0 < \epsilon < 1$ and $0 < \gamma < 1$, we have
% \begin{align*}
%     \max_{i \in [d]} h_1^2(\hat{p}_{\hat{w}_i}, p_{\hat{w}_i}) \leq \epsilon,
% \end{align*}
% with probability at least $1-\gamma$, whenever 
% \begin{align*}
%     N \gtrsim \frac{1}{\epsilon^2} \log^6 \frac{d}{\epsilon \gamma}.
% \end{align*}
% \label{res: estimating estimated marginals}
% \end{lemma}
% The proof is deferred to \ref{prf: estimating estimated marginals}. To control the remaining terms from the bound of \Cref{res: basic error bound}, we need the results from \Cref{sec: bounds for estimation of eigmat} and \Cref{subsec: stability}.

\subsection{Univariate density estimation for the marginals}
\label{subsec: univariate density estimation}
{
Recall that for each $i = 1, \dots, d$, we need to estimate the densities $p_{\hat{w}_i}$ from samples $\{\hat{Z}_i^{(j)} = \hat{w}_{i} \cdot X^{(j)}: j \in [M]\}$. When $p$ is log-concave, an efficient as well as convenient option is log-concave MLE~\cite{samworth2018recent}, which is in fact minimax optimal.

\begin{proposition}[Sample complexity of univariate log-concave MLE~\cite{kim2016global}]
\label{res: sample complexity 1d log-concave MLE}
Let $f: \mathbb{R} \to [0, \infty)$ be a univariate log-concave density, and denote by $\hat{f}$ the log-concave MLE computed from $N$ iid samples from $f$. Given $\epsilon >0$ and $0 < \gamma < 1$, we have 
$$
h_{1}^{2}(\hat{f}, f) \leq \epsilon
$$
with probability at least $1-\gamma$ whenever
$$
N \geq \frac{C_{1}}{\epsilon^{5/4}} \log^{5/4}\left( \frac{C_{2}}{\gamma} \right) \vee \frac{C_{3}}{\gamma},
$$
for absolute constants $C_1, C_2, C_3 >0$. 
\end{proposition}

In terms of \Cref{def: sample complexity density estimation}, we can restate this
as $N^*(\epsilon, \gamma; f) \leq C_{1}\epsilon^{-5/4} \log^{5/4}\left(C_{2} /\gamma \right) \vee C_{3} /\gamma$. Notice that the right-hand side does not depend on $f$, and as a result, the same upper bound also holds for the term $\sup_{s \in \mathbb{S}^{d-1}} N^*(\epsilon, \gamma; p_s)$ from \Cref{res: generic sample complexity proposed}

The statement above is a ``high-probability" variant of Theorem 5 of \citet{kim2016global}, and we provide a proof in \Cref{prf: sample complexity 1d log-concave MLE}. Note that while the $\epsilon^{-5/4}$ dependence is optimal, the $1/\gamma$ dependence in the sample complexity could be improved. A standard way to do this is via \emph{boosting}. The following definition uses the boosting technique from Algorithm A of \citet{geom_logconcave_boosting}.

\begin{definition}[Boosted univariate log-concave MLE]
\label{def: boosted}
    Let $n, q \in \mathbb{N}$, and suppose we have access to $nq$ iid samples from a univariate log-concave density $f: \mathbb{R} \to [0, \infty)$. Additionally, fix $\epsilon >0$. The boosted estimator $\hat{\hat{f}}_{\epsilon}$ at error threshold $\epsilon$ is defined via the following procedure:
\vspace{0.1in}
\begin{enumerate}
\item Split the samples into $q$ batches of size $n$, and denote by $\hat{f}^{[b]}$ ($b=1,2, \dots, q$) the log-concave MLE computed from the samples in batch $b$. 
\item For each batch $b$, count how many other batches $b'$ satisfy $h_{1}(\hat{f}^{[b]}, \hat{f}^{[b']}) \leq 2\sqrt{ \epsilon } /3$.
\item  If there exists a batch $b$ for which this number is larger than $q /2$, return $\hat{\hat{f}}_{\epsilon}:=\hat{f}^{[b]}$. Otherwise, return $\hat{\hat{f}}_{\epsilon}:= \hat{f}^{[1]}$ .
\end{enumerate}
\end{definition}

As the next proposition show, the sample complexity of the boosted log-concave MLE depends only logarithmically on $1/\gamma$, which is much milder. See \Cref{prf: boosted sample complexity} for the proof.
\begin{proposition}[Sample complexity boosted univariate log-concave MLE]
\label{res: boosted sample complexity}
Let $f: \mathbb{R} \to [0, \infty)$ be a univariate log-concave density, and fix $\epsilon>0$. Denote by $\hat{\hat{f}}_\epsilon$ the boosted log-concave MLE at error threshold $\epsilon$, computed from $q$  independent batches of $n$ iid samples from $f$ each. 
Given $0 < \gamma < 1$, we have 
\begin{equation*}
h_{1}^{2}(\hat{\hat{f}}_\epsilon,f) \leq \epsilon
\end{equation*}
with probability at $1-\gamma$ provided 
\begin{equation*}
n \geq \frac{C_{1}}{\epsilon^{5/4}} \quad \text{and} \quad q \geq C_{2} \log \left( \frac{1}{\gamma} \right),
\end{equation*}
for absolute constants $C_1, C_2 \geq 0$.

\end{proposition}

In particular, by multiplying the bounds for $n$ and $q$, we get that the sample complexity \\
$N^*(\epsilon, \gamma; f) \leq C_{1}C_{2}\epsilon^{-5/4}\log(1 /\gamma)$. As before, the right-hand side does not depend on $f$.

}

\subsection{Unmixing matrix estimation}
\label{sec: bounds for estimation of eigmat}
Here, we discuss standard techniques for estimating $\eigmat$, and
show closeness of $\hat{w}_i$ and $w_i$. Recall that the methods used here for estimating $\eigmat$ -- PCA or {ICA} -- require different assumptions on the moments of $X$ (or $Z = \eigmat X$) in order to yield bounds on the estimation errors.

Consider regular PCA first. The PCA estimate of $\eigmat$ was obtained by computing the eigenvectors of the sample covariance matrix $\hat{\VarX}$. Bounding the sample complexity of this estimation procedure is standard, and can be broken down into two simple steps: (i) bounding the distance between the empirical covariance matrix $\hat{\VarX}$ and the true covariance matrix $\VarX$, and (ii) bounding the distances between their corresponding eigenvectors.  
The result is summarized in the following proposition, the proof of which is delayed to \ref{prf: estimating W by PCA}.

\begin{proposition}[Sample complexity of estimating $\eigmat$ by PCA]
\label{res: estimating W by PCA}
Let $p$ be a zero-mean, log-concave probability density on $\bR^d$. Let $Y^{(1)}, Y^{(2)}, \dots, Y^{(M)} \overset{\mathrm{iid}}{\sim} p$ be samples, and suppose \nameref{para: M1} is satisfied with eigenvalue separation $\delta>0$. Let $\epsilon >0$  and $0<\gamma \leq 1/e$, and define $\tilde M(d, \gamma):= d \log^4(1/\gamma) \log^2\left({C_0} \log^2(1/\gamma) \right)$. If
\begin{align}
    M \geq {C _1}\tilde{M}(d, \gamma) \vee \left\{\frac{{C_2} \op{\VarX}^2 d}{\delta^2 \epsilon^2}\log^4(1/\gamma)\log^2 \left(\frac{{C_3}\,\op{\VarX}^2}{\delta^2\epsilon^2}\log^2(1/\gamma)\right) \right\}
  \label{eq: PCA sample complexity}  
\end{align}
{where $C_0, C_1, C_2, C_3 > 0$ are absolute constants}, then with probability at least $1-\gamma$, PCA recovers vectors $\{\hat{w}_1, \dots, \hat{w}_d\}$ such that
$$ 
\norm{\hat{w}_i - w_i}_2 \leq \epsilon,
$$
up to permutations and sign-flips.
\end{proposition}

{The right-hand side of \eqref{eq: PCA sample complexity} is an upper bound on sample complexity $M^*(\epsilon, \gamma; p)$ of the PCA-based estimator, for $0 < \gamma \leq 1/e$.} The term $\tilde M(d, \gamma)$ is needed in order to allow the result above to hold for all positive $\epsilon$, instead of just small values of $\epsilon$.

{
As discussed earlier, assumptions on strict eigenvalue separation can be stringent, and possibly unrealistic in practical settings. In such cases, $\eigmat$ should be estimated by ICA instead, for which we have a result similar to \Cref{res: estimating W by PCA}.
}

{
Recall the definitions of $S$ and $\mathbf{A}$ from \Cref{sec: set-up and main result}, and express $X= \mathbf{A}S$. The algorithm of \citet{auddyLargeDimensionalIndependent2023} then estimates the columns of $\mathbf{A}$, which coincide with $w_1, \dots, w_d \in \mathbb{S}^{d-1}$ up to scalings and signs. 

The proposition below is a direct consequence of Theorem 3.4 of \citet{auddyLargeDimensionalIndependent2023}, and provides an upper bound on $M^*(\epsilon, \gamma; p)$ for $3d^{-3} \leq \gamma < 1$. The proof is delayed to \Cref{prf: W estimation ICA}.

\begin{proposition}[Sample complexity of estimating $\eigmat$ by ICA~\cite{auddyLargeDimensionalIndependent2023}]
\label{res: W estimation ICA}
Let $p$ be a zero-mean probability density on $\mathbb{R}^{d}$ satisfying the orthogonal independent components property. Suppose, additionally, that \nameref{para: M2} is satisfied with parameters $\mu_{4}, \mu_{9} >0$ and $\kappa \geq 1$. Given $\epsilon > 0$ and $3d^{-3} \leq \gamma < 1$, and samples $Y^{(1)}, \dots, Y^{(M)} \overset{\mathrm{iid}}{\sim}p$, if
\begin{equation}
\label{eq: M ICA}
M \geq \frac{K' d}{\epsilon^2} \log (3 /\gamma) \,\vee\, K'' d^2 \log^2(3 /\gamma) \,\vee\, (3 /\gamma)^{8/9}
\end{equation}
for constants $K', K'' >0$ depending on $\mu_{4}, \mu_{9}, \kappa$, then with probability at least $1-\gamma$, ICA recovers vectors $\{ \hat{w}_{1}, \dots, \hat{w}_{d} \}$ such that
\begin{equation*}
\left( \frac{1}{d} \sum_{i=1}^d\|\hat{w}_{i}-w_{i}\|_{2}^2 \right)^{1/2}  \leq \epsilon
\end{equation*}
for independent directions $w_{1}, \dots, w_{d}$ of $p$.

\end{proposition}
% The right-hand side of \eqref{eq: M ICA} upper bounds $M^*(\epsilon, \gamma; p)$ over the range $3d^{-3} \leq \gamma < 1$.

% \begin{remark}
% The bound in Theorem 4.2 of \cite{fourier_pca} holds with high probability. An {arbitrarily high success probability} of $1-\gamma$ is achieved by \emph{boosting} (see for example \citet[Theorem 2.8]{geom_logconcave_boosting}), which is responsible for the additional factor of $\log 1/\gamma$ in the sample complexity above.
% \end{remark}

{
The result above relies on non-zero excess kurtosis of the components of $Z$, which may be interpreted as a measure of non-Gaussianity. However, there exist distributions which differ from Gaussians only in higher cumulants. Techniques such as Fourier PCA~\cite{fourier_pca} allow for estimating $\eigmat$ in such cases. As discussed earlier, our theory is agnostic to specific choices of ICA algorithms.
}

}

\subsection{Stronger bounds under additional assumptions}
\label{subsec: additional assumptions}
{
While the statement of \Cref{res: generic sample complexity proposed} is quite general, the inequality in \eqref{eq: M generic} contains a constant $K'_d>0$ that typically depends at least exponentially in $d$. This is owed to the stability result \Cref{res: hellinger stability general log-concave}, which uses only the log-concavity of $p$. Stronger stability inequalities are possible when $p$ is smooth.
}

In what follows, we consider two possible assumptions on the smoothness of $p$. The choice of assumptions then allows us to quantify stability in either Kullback-Leibler (KL) divergence or total variation distance first, before translating this to stability in Hellinger distance.
Let $p$ be a probability density on $\bR^d$ with mean zero.

\paragraph{Smoothness assumption S1}\label{para: S1}
{The density $p$ has finite second moments, is continuously differentiable, and the integral $\int_{\mathbb{R}^d} \|x\|_{2} \|\nabla p(x)\|_{2}\,dx < \infty$.}
The function $\varphi = -\log p$ is finite-valued everywhere, and belongs to the H{\"o}lder space $\mathcal{C}^{1, \alpha}(\bR^d)$ for some $0< \alpha \leq 1$, i.e.
\begin{align}
    [\nabla \varphi]_\alpha := \sup_{x \neq y} \frac{\norm{\nabla \varphi(y) - \nabla \varphi(x)}_2}{\norm{y-x}_2^\alpha} < \infty.
    \label{eq: S1}
\end{align}

{Note that $-\nabla\varphi = \nabla p /p$ is commonly known as the \emph{score function}, whose regularity also plays a role in sampling theory. We may also interpret \eqref{eq: S1} as a modified H{\"o}lder condition on $p$.}
This assumption yields good stability in KL divergence, but it can be stringent since it requires, in particular, that $p$ is strictly positive everywhere. 
The following assumption relaxes this.

\paragraph{Smoothness assumption S2}\label{para: S2}
The density $p$ {has finite second moments}, and has a (weak) gradient $\nabla p$, with
\begin{align}
    \norm{\nabla p}_{L^1} = \int_{\bR^d}\norm{\nabla p(x)}_2\,dx < \infty.
    \label{eq: S2}
\end{align}
This assumption yields a stability estimate in total variation distance (or equivalently, $L^1$ distance). Note that $p$ is not required to be differentiable in the strong sense, and hence, can have ``corners" or ``kinks". But, jumps are still not allowed. {An interesting example is the exponential-type density $p(x)= a e^{-\norm{x}_1}$.} 
Also, to compare with assumption S1, note that if $p(x)= e^{-\varphi(x)}$ with $[\nabla \varphi]_\alpha < \infty$, then we write $\nabla p(x) = -\nabla \varphi(x) p(x)$ which can translate to a bound on $\norm{\nabla p}_{L^1}$.

Assumptions S1 and S2 can be interpreted better by considering a multivariate Gaussian distribution with density
\begin{align*}
    p(x) = a \exp\left( - \frac{1}{2} x \cdot \VarX^{-1} x \right)
\end{align*}
where $a = (2\pi)^{-d/2} (\det \VarX)^{-1/2}$ is the normalizing constant. Here, $\varphi(x) = \frac{1}{2} x \cdot \VarX^{-1} x - \log a$ is clearly $\mathcal{C}^{1, 1}(\bR^d)$, with $[\nabla \varphi]_{1} = 1/\lambda_{\min}(\VarX)$. On the other hand, $\norm{\nabla p}_{L^1} \leq \sqrt{d/\lambda_{\min}(\VarX)} < \infty$, provided $\lambda_{\min}(\VarX)>0$. We start with stability under \nameref{para: S1}.

\begin{lemma}[Stability in KL]
    \label{res: rotational stability of KL}
Let $p$ be a zero-mean probability density on $\bR^d$, satisfying \nameref{para: S1}. Let $\Rpl \in \bR^{d\times d}$ be any orthogonal matrix. Then,
    \begin{align*}
        \mathrm{KL}(p \,||\, {\Rpl_{\sharp}^Tp}) \leq [\nabla \varphi]_\alpha \, d^{(1+\alpha)/2} \, \op{\VarX}^{(1+\alpha)/2}\, \norm{\mathbf{I} - \Rpl}^{1+\alpha}_{\mathrm{op}}.
    \end{align*}
\end{lemma}

The proof involves a direct analysis of the KL integral, using the bound $\norm{\nabla \varphi(y) - \nabla\varphi(x)}_2 \leq [\nabla \varphi]_{\alpha} \norm{y-x}_2^\alpha$. It is delayed to \Cref{prf: rotational stability of KL}. This result translates to Hellinger distance via the upper bound~\cite{gibbs2002choosing}
\begin{align*}
    h_d^2(p, {\Rpl_{\sharp}^Tp}) \leq \frac{1}{2}\mathrm{KL}(p \,||\, {\Rpl_{\sharp}^Tp}).
\end{align*}

{We remark that the exponent $1+\alpha$ of $\op{\mathbf{I} - \Rpl}$ in \Cref{res: rotational stability of KL} is significantly higher than the corresponding exponent $1/2$ in \Cref{res: hellinger stability general log-concave}, and this leads to better convergence rates as $\Rpl \to \mathbf{I}$. In particular, this contributes an $\epsilon^{-2/(1+\alpha)}$ dependence to our sample complexity bounds, as compared to $\epsilon^{-4}$ (see the rows of \Cref{tab: table M} corresponding to S1).
We note that using higher derivatives of $\varphi$ (provided $\varphi \in \mathcal{C}^{s, \alpha}(\mathbb{R}^d)$ for $s > 1$) in the proof could potentially lead to even faster convergence.
Next, we move on to stability under \nameref{para: S2}.
}
% For the next stability result, we can go beyond rotations and consider more general linear maps on $\bR^d$. Given a linear map $\mathbf{A}:\bR^d \to \bR^d$, denote by $\mathbf{A}_{\sharp}p$ the pushforward of $p$ through $\mathbf{A}$, i.e. if $X \sim p$, then $\mathbf{A}X\sim \mathbf{A}_{\sharp}p$. Also, if $\mathbf{A}$ is invertible, then $\mathbf{A}_\sharp p$ has the density $|\det(\mathbf{A}^{-1})|\, \left(p \circ \mathbf{A}^{-1}\right)$, which simplifies to $p \circ \mathbf{A}^T$ when $\mathbf{A}$ is orthogonal.

\begin{lemma}[Stability in TV]
\label{res: linear stability of TV}
Let $p$ be a zero-mean probability density on $\bR^d$, satisfying \nameref{para: S2}. Let $\mathbf{A}\in \bR^{d \times d}$  be invertible with $\op{\mathbf{A}^{-1}} \leq B$. Then,
$$
\mathrm{TV}(p, \mathbf{A}_{\sharp}p) \equiv \frac{1}{2}
\norm{\mathbf{A}_{\sharp}p-p}_{L^1}\leq \frac{1}{2}\left[(1+B)\norm{\nabla p}_{L^1}\sqrt{d} + d\right]\op{\VarX}^{1/4}\,\op{\mathbf{I}-\mathbf{A}}^{1/2}.
$$
\end{lemma}
The proof is based on bounding the Wasserstein-1 distance between $p$ and $\mathbf{A}_{\sharp}p$, accompanied by an argument inspired from \citet{CHAE2020108771} (see \Cref{prf: linear stability of TV}). Again, the bound can be translated to Hellinger distance via the inequality \cite{gibbs2002choosing}
\begin{align*}
    h_d^2(p, \mathbf{A}_{\sharp}p) \leq \mathrm{TV}(p, \mathbf{A}_{\sharp}p).
\end{align*}
Finally, in the case of orthogonal matrices, the parameter $B=1$. 

{Notice that the exponent of $\op{\mathbf{I} - \mathbf{A}}$ in \Cref{res: linear stability of TV} is $1/2$ again -- the same as in \Cref{res: hellinger stability general log-concave}. This contributes an $\epsilon^{-4}$ dependence to our sample complexity bounds (see the rows of \Cref{tab: table M} corresponding to S2). A better exponent could potentially be obtained by requiring control on higher order Sobolev norms and following estimates from KDE theory~\cite{holmstrom1992asymptotic}.}

{
We can now state the analogue of \Cref{res: generic sample complexity proposed} under these stronger stability results. The proof is provided in \Cref{prf: smooth sample complexity proposed}.

\begin{corollary}[Sample complexities under smoothness assumptions]
\label{res: smooth sample complexity proposed}
Let $p$ be a zero-mean probability density on $\mathbb{R}^d$, satisfying the orthogonal independent components property. Let $\hat{p}$ be the proposed estimator computed from samples $X^{(1)}, \dots, X^{(N)}, Y^{(1)}, \dots, Y^{(M)} \overset{\mathrm{iid}}{\sim}p$. Then, for any $\epsilon>0$ and $0 < \gamma<1$, we have 
\begin{equation*}
h_{d}^2(\hat{p},p) \leq \epsilon
\end{equation*}
with probability at least $1-\gamma$ whenever
\begin{subequations}
\begin{equation}
N \geq \sup_{s \in \mathbb{S}^{d-1}}N^*\left( \epsilon /4d,\, \gamma /2d;\, p_{s} \right),
\label{eq: N smooth}
\end{equation}
and
\begin{equation}
M \geq \begin{cases}
    M^*\left(\frac{1}{4} d^{-\frac{2+\alpha}{1+\alpha}}\,[\nabla \varphi]_{\alpha}^{- \frac{1}{1+\alpha}}\,\op{\mathbf{\Sigma}}^{-1/2}\,\epsilon^{\frac{1}{1+\alpha}}\,,\, \gamma/2; \,p \right) & \text{if S1 holds} \\
    M^*\left(4^{-4} \left[ \|\nabla p\|_{L^1} d^{7/4} + d^{9/4} \right]^{-2} \op{\mathbf{\Sigma}}^{-1/2} \epsilon^{2}\,,\, \gamma/2; \,p\right) & \text{if S2 holds}. 
\end{cases}
\label{eq: M smooth}
\end{equation}
\end{subequations}
Here, $N^*$ and $M^*$ are respectively the sample complexities of the univariate density estimator and the unmixing matrix estimator invoked by $\hat{p}$.

\end{corollary}

As stated, \Cref{res: smooth sample complexity proposed} does not require log-concavity of $p$. If $p$ is smooth enough but not log-concave, the univariate density estimation stage of the proposed estimator can be done via \textit{kernel density estimation} (KDE), for example, which would have a different sample complexity $N^*(\epsilon, \gamma, f)$.
% If $p$ is smooth enough, and a suitable univariate density estimator and unmixing matrix estimator are chose

}

\begin{table}[!t]
\caption{Number of samples $N$ that suffice (along with a sufficiently large $M$) for the proposed estimator to achieve $h_d^2(\hat{p}, p) \leq \epsilon$ with probability at least $1-\gamma$. Here, $p$ is assumed to be a zero-mean log-concave density on $\mathbb{R}^d$, satisfying the orthogonal independent components property. The bottom row additionally assumes that all marginals of $p$ are log-$k$-affine.}
    \centering
    {
    \begin{tabular}{c|c}
    \hline
    \textbf{Setting} & \textbf{Number of samples} $N$\\
    \hline
         &  \\
    Log-concave MLE &   $\frac{C_{1}d^{5/4}}{\epsilon^{5/4}} \log^{5/4}\left( \frac{C_{2}d}{\gamma} \right) \vee \frac{C_{3}d}{\gamma}$ \\ 
        &   \\
    Boosted log-concave MLE &   $\frac{Cd^{5/4}}{\epsilon^{5/4}}\log\left( \frac{2d}{\gamma} \right)$ \\
        &   \\
    Adapted log-concave MLE &   $\frac{Ckd^2}{\epsilon\gamma} \quad(\text{up to log factors})$ \\
        &   \\
        \hline
    \end{tabular}
    }
    \label{tab: table N}
\end{table}

\begin{table}[h]
\caption{Number of samples $M$ that suffice (along with a sufficiently large $N$) for the proposed estimator to achieve $h_d^2(\hat{p}, p) \leq \epsilon$ with probability at least $1-\gamma$. Here, $p$ is assumed to be a zero-mean log-concave density on $\mathbb{R}^d$, satisfying the orthogonal independent components property.
Invoking PCA or ICA additionally assumes that \nameref{para: M1} or \nameref{para: M2} is (respectively) satisfied. The term $\tilde{M}(d, \gamma)$ is defined in \Cref{res: estimating W by PCA}. The constants $K', K''$ depend on parameters from \nameref{para: M2}.}
    \centering
    {
    \begin{tabular}{c|c}
    \hline
         \textbf{Setting} &  \textbf{Number of samples} $M$\\
         \hline
            &   \\
         
         PCA & \small{$C_{1}\tilde{M}(d,\gamma) \vee \left\{ \frac{K_{d}' \mathrm{cond}(\mathbf{\Sigma}) \op{\mathbf{\Sigma}}^{2} \log^4(2 /\gamma)}{\delta^2\epsilon^4}  \log^2 \left( \frac{K_{d}'' \mathrm{cond}(\mathbf{\Sigma}) \op{\mathbf{\Sigma}}^{2} \log^2(2 /\gamma)}{\delta^2\epsilon^4}  \right) \right\}$}\\

            &   \\
        ICA & \small{$\frac{K_{d} K'\mathrm{cond}(\mathbf{\Sigma})}{\epsilon^4} \log (6 /\gamma) \,\vee\, K'' d^2 \log^2(6 /\gamma) \,\vee\, (6 /\gamma)^{8/9}$} \\
            &   \\
        PCA, S1 & \small{$C_{1}\tilde{M}(d, \gamma) \vee \left\{\frac{C_{2}\op{\mathbf{\Sigma}}^3 [\nabla \varphi]_{\alpha}^{\frac{2}{1+\alpha}} d^\frac{5+3\alpha}{1+\alpha}\log^4(2/\gamma)}{\delta^2 \epsilon^\frac{2}{1+\alpha}}\log^2 \left(\frac{C_{3}\op{\mathbf{\Sigma}}^3 [\nabla \varphi]_{\alpha}^{\frac{2}{1+\alpha}} d^\frac{4+2\alpha}{1+\alpha}\log^2(2/\gamma)}{\delta^2 \epsilon^\frac{2}{1+\alpha}}\right) \right\}$} \\

            &   \\
        ICA, S1 & \small{$\frac{K' \op{\mathbf{\Sigma}} [\nabla\varphi]_{\alpha}^{\frac{2}{1+\alpha}}\,d^{\frac{5+3\alpha}{1+\alpha}}}{\epsilon^{\frac{2}{1+\alpha}}} \log (6 /\gamma) \,\vee\, K'' d^2 \log^2(6 /\gamma) \,\vee\, (6 /\gamma)^{8/9}$} \\
            &   \\
        PCA, S2 & \small{$C_{1}\tilde{M}(d, \gamma) \vee \left\{\frac{C_{2}\op{\mathbf{\Sigma}}^3 (\norm{\nabla p}_{L^1}^4 d^8 + d^{10})\log^4(2/\gamma)}{\delta^2 \epsilon^4}\log^2 \left(\frac{C_{3}\op{\mathbf{\Sigma}}^3 (\norm{\nabla p}_{L^1}^4 d^7 + d^{9})\log^2(2/\gamma)}{\delta^2 \epsilon^4}\right) \right\}$} \\
            &   \\
        ICA, S2 & \small{$\frac{K' \op{\mathbf{\Sigma}}(\|\nabla p\|_{L^1}^4 d^8 + d^{10})}{\epsilon^4} \log (6 /\gamma) \,\vee\, K'' d^2 \log^2(6 /\gamma) \,\vee\, (6 /\gamma)^{8/9}$} \\
            &   \\
            \hline
\end{tabular}
        }
    \label{tab: table M}
\end{table}

{
A different way in which we get stronger bounds under additional assumptions is \emph{adaptation}. A univariate density $f:\mathbb{R} \to [0, \infty)$ is said to be log-$k$-affine, for some $k \in \mathbb{N}$, if $\log f$ is piece-wise affine with at most $k$ pieces on the support of $f$. \citet{Kim2018Adaptation} established parametric sample complexity of the log-concave MLE when the true density is log-$k$-affine with $k$ fixed. The following an immediate consequence of their Theorem 3. 

\begin{proposition}[Adaptation of log-concave MLE~\cite{Kim2018Adaptation}]
    \label{res: adaptation}
Let $f: \mathbb{R} \to [0, \infty)$ be a log-concave density that is log-$k$-affine for some $k \in \mathbb{N}$, and denote by $\hat{f}$ the log-concave MLE computed from $N$ iid samples from $f$. Given $\epsilon>0$ and $0 < \gamma< 1$, we have
\begin{equation*}
h_{1}^{2}(\hat{f}, f) \leq \epsilon
\end{equation*}
with probability at least $1-\gamma$ whenever
\begin{equation*}
\frac{N}{\log^{5/4}(eN /k)} \geq \frac{Ck}{\epsilon\gamma}
\end{equation*}
for an absolute constant $C>0$.
\end{proposition}

As a consequence, we have that $N^*(\epsilon, \gamma; f)$ is at most $Ck /\epsilon\gamma$ up to log factors when $f$ is log-$k$-affine. The $1 /\gamma$-dependence is due to Markov's inequality, and may be improved to $\log(1 /\gamma)$ via boosting as before (see \Cref{def: boosted}).

We conclude this discussion by collecting some concrete bounds on the number of samples $N$ and $M$ that are sufficient for the proposed estimator to achieve $h_d^2(\hat{p}, p) \leq \epsilon$ with probability at least $1-\gamma$. These depend on the particular procedures invoked for marginal estimation and unmixing matrix estimation, as well as on the smoothness assumptions. The bounds on $N$ are presented in \Cref{tab: table N}, and those for $M$ are tabulated in \Cref{tab: table M}. The results in both tables take $p$ to be a zero-mean, log-concave density on $\mathbb{R}^d$, satisfying the orthogonal independent components property.

}

\subsection{Bounds under misspecification of log-concavity}
\label{subsec: misspecification}
{

Let $\mathcal{P}_{d}$ be the set of probability distributions $P$ on $\mathbb{R}^{d}$ satisfying $\mathbb{E}_{X \sim P} \|X\|_{2} < \infty$ and $\mathbb{P}_{X \sim P}(X \in H) < 1$ for every hyperplane $H \subseteq \mathbb{R}^{d}$. Identical to the setting of densities, we say that a zero-mean distribution $P \in \mathcal{P}_{d}$ satisfies the orthogonal independent components property if there exists an orthogonal matrix $\mathbf{W} \in \mathbb{R}^{d \times d}$ such that the components of $Z=\mathbf{W}X$ are independent when $X \sim P$.

To study the case of misspecification, we need the log-concave projection of \citet{dumbgen_ApproximationLogConcaveDistributions_2011}, which is a map from $\mathcal{P}_{d}$ to the set of log-concave densities $\mathcal{F}_{d}$ on $\mathbb{R}^d$, defined by
\begin{equation*}
\psi_{d}^*(P) := \argmax_{f \in\mathcal{F}_{d}}\,\,\mathbb{E}_{X \sim P}\log f(X).
\end{equation*}
If $P$ is a zero-mean distribution in $\mathcal{P}_{d}$ satisfying the orthogonal independent components property, we show below that our proposed estimator (invoking univariate log-concave MLE), computed from iid samples from $P$, gets close to $\psi_{d}^*(P)$.

\begin{theorem}[Error bound with misspecification of log-concavity]
    \label{res: basic err bound misspecification}
Let $P$ be a zero-mean distribution in $\mathcal{P}_{d}$, satisfying the orthogonal independent components property with independent directions $w_{1}, \dots, w_{d}$. Let $\hat{p}$ be the proposed estimator computed from samples $X^{(1)}, \dots, X^{(N)}, Y^{(1)}, \dots, Y^{(M)} \overset{\mathrm{iid}}{\sim}P$. We have the error bound
\begin{align*}
h_{d}(\hat{p}, \psi_{d}^*(P)) \leq \sqrt{ d } & \left( \max_{i \in [d]} h_{1}(\hat{p}_{\hat{w}_{i}}, \psi_{1}^*(P_{\hat{w}_{i}})) + \max_{i \in[d]} h_{1}(\psi_{1}^*(P_{\hat{w}_{i}}), \psi_{1}^*(P_{w_{i}})) \right)  \\
 & + h_{d}(\psi_{d}^*(\hat{\mathbf{R}}_{\sharp} P), \psi_{d}^*(P)),
\end{align*}
where $\hat{\mathbf{R}}= \hat{\mathbf{W}}^T \mathbf{W}$.

\end{theorem}
See \Cref{prf: basic err bound misspecification} for the proof.

We can now obtain sample complexities; to get concrete bounds, we focus on the ICA case and recall \nameref{para: M2} with parameters $\mu_{4}, \mu_{9}$ and $\kappa$. Given $P \in \mathcal{P}_{d}$ and $X \sim P$, define the moment quantities
\begin{equation*}
\eta_{P} := \inf_{s \in S^{d-1}}\,\mathbb{E}\left| s \cdot (X-\mathbb{E}X) \right|, \quad \text{and}\quad L_{q} := \left( \mathbb{E}\,\|X\|_{2}^q \right)^{1/q}
\end{equation*}
for $q \geq 1$.

\begin{theorem}[Samplex complexity with misspecification of log-concavity]
    \label{res: sample complexity misspecification}
Let $P$ be a zero-mean distribution in $\mathcal{P}_{d}$ satisfying the orthogonal independent components property. Suppose that \nameref{para: M2} is satisfied, $\eta_{P}>0$, and $L_{q} < \infty$ for some $q > 1$. Let $\hat{p}$ be the proposed estimator computed from samples $X^{(1)}, \dots, X^{(N)}, Y^{(1)}, \dots, Y^{(M)} \overset{\mathrm{iid}}{\sim}P$, invoking univariate log-concave MLE and ICA. Then, for any $\epsilon>0$ and $6d^{-3} < \gamma<1$, we have 
\begin{equation*}
h_{d}^2(\hat{p},\psi_{d}^*(P)) \leq \epsilon
\end{equation*}
with probability at least $1-\gamma$ whenever
\begin{equation*}
\frac{N}{\log^{\frac{3q}{q-1}}N} \geq \left( K_{q} \sqrt{ \frac{L_{q}}{\eta_{P}} }\, \frac{8d^2}{\epsilon\gamma} \right)^{\frac{2q}{q-1}} ,
\end{equation*}
and
\begin{equation*}
M \geq \frac{K' C_{d}'' L_{1}^2}{\eta_{P}^2\, \epsilon^4} \log (6 /\gamma) \,\vee\, K'' d^2 \log^2(6 /\gamma) \,\vee\, (6 /\gamma)^{8/9}.
\end{equation*}
Here, the constants $K', K'' >0$ depend on $\mu_{4}, \mu_{9}, \kappa$, the constant $K_{q}>0$ depends only on $q$, and $C_{d}''>0$ depends only on $d$.
\end{theorem}

The proof is provided in \Cref{prf: sample complexity misspecification}. The inequality for $N$ depends polynomially on $\gamma$, and this is due to the use of Markov's inequality in the proof. If the distribution $P$ is sub-exponential, better tails bounds may be used, as discussed by \citet{barberLocalContinuityLogconcave2021}.

}

\begin{figure}[!h]
    \centering
    \includegraphics[width=0.9\textwidth]{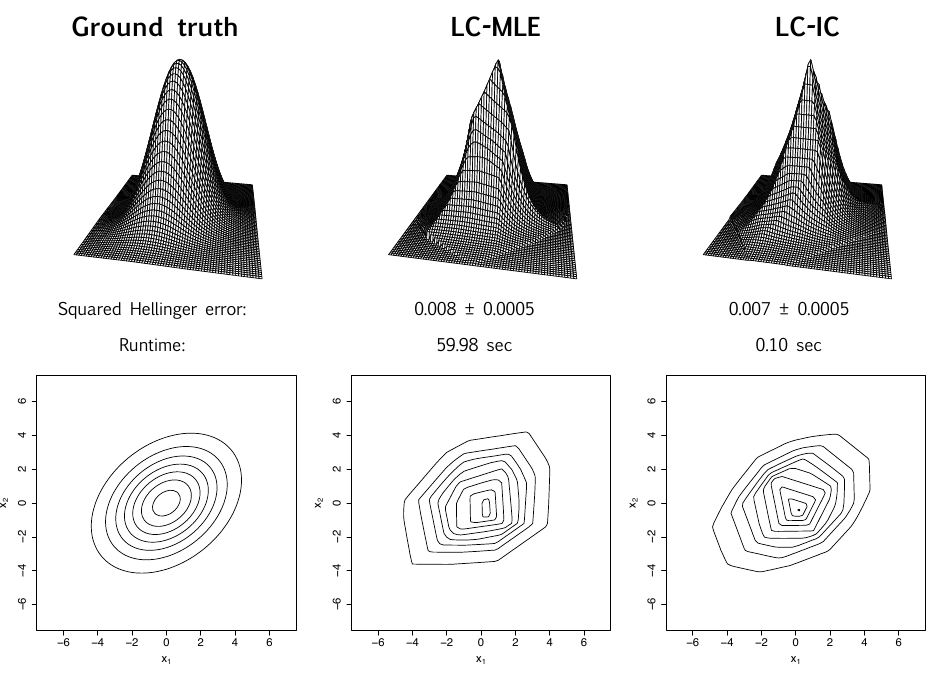}
    \caption{Comparing density estimation performance of our proposed log-concave independent components estimator (LC-IC) with the usual log-concave maximum likelihood estimator (LC-MLE), on simulated bivariate normal data. The ground truth and estimated densities are visualized in the top row, and the corresponding level curves are plotted in the bottom row.}
    \label{fig:compare-visually-gaussian}
\end{figure}

\section{Computational experiments}
\label{sec: comp exp}
In this section, we demonstrate the performance of the log-concave independent components estimator  (LC-IC, \Cref{alg: main algo}) on simulated as well as real data. \Cref{subsec: gaussian comparison tests} compares the estimation errors and runtimes of LC-IC with those of usual multivariate log-concave MLE {and the LogConICA algorithm of }\citet{samworth2012independent}, on simulated data. \Cref{subsec: split-r-results} assesses the effect of the sample splitting proportions between the two tasks: estimation of $\eigmat$ ($M$ samples) and estimation of the marginals ($N$ samples). {We also test the case of no sample splitting.} Finally, \Cref{subsec: mixtures} combines LC-IC with the Expectation-Maximization (EM) algorithm \cite{dempster1977maximum} to estimate mixtures of densities from real data -- the Wisconsin breast cancer dataset \cite{misc_breast_cancer_wisconsin_(diagnostic)_17}.
% and one slice of the mouse organogenesis spatiotemporal transcriptomic atlas (MOSTA) dataset \cite{chen2022spatiotemporal}. 

\subsection{Comparison of performance on simulated data}
\label{subsec: gaussian comparison tests}

\begin{table}[!t]
\centering
\caption{Comparing average algorithm runtimes in \textbf{seconds} of our proposed log-concave independent components estimator (LC-IC) with the usual log-concave maximum likelihood estimator (LC-MLE), on simulated multivariate normal data. Here, $n$ is the number of samples, and $d$ is the dimension.}
\begin{tblr}{
  row{1} = {c},
  cell{2}{1} = {c},
  cell{2}{3} = {r},
  cell{2}{4} = {r},
  cell{2}{5} = {r},
  cell{2}{6} = {r},
  cell{2}{7} = {r},
  cell{3}{1} = {c},
  cell{3}{3} = {r},
  cell{3}{4} = {r},
  cell{3}{5} = {r},
  cell{3}{6} = {r},
  cell{3}{7} = {r},
  cell{4}{1} = {c},
  cell{4}{3} = {r},
  cell{4}{4} = {r},
  cell{4}{5} = {r},
  cell{4}{6} = {r},
  cell{4}{7} = {r},
  cell{5}{1} = {c},
  cell{5}{3} = {r},
  cell{5}{4} = {r},
  cell{5}{5} = {r},
  cell{5}{6} = {r},
  cell{5}{7} = {r},
  cell{6}{1} = {c},
  cell{6}{3} = {r},
  cell{6}{4} = {r},
  cell{6}{5} = {r},
  cell{6}{6} = {r},
  cell{6}{7} = {r},
  cell{7}{1} = {c},
  cell{7}{3} = {r},
  cell{7}{4} = {r},
  cell{7}{5} = {r},
  cell{7}{6} = {r},
  cell{7}{7} = {r},
  hline{2,8} = {-}{},
  hline{4,6} = {1-2}{},
}
      &        & $n=100$ & $n=500$ & $n=1000$ & $n=2000$ & $n=3000$ \\
$d=2$ & LC-MLE & 0.56    & 11.93   & 60.79    & 469.65   & 1703.08  \\
      & LC-IC  & 0.02    & 0.04    & 0.10     & 0.09     & 0.12     \\
$d=3$ & LC-MLE & 1.82    & 42.00   & 141.31   & 874.58   & 3183.45  \\
      & LC-IC  & 0.02    & 0.06    & 0.08     & 0.13     & 0.17     \\
$d=4$ & LC-MLE & 8.70    & 155.03  & 735.06   & 4361.81  & 12933.45 \\
      & LC-IC  & 0.03    & 0.07    & 0.11     & 0.16     & 0.24     
    \label{tab: runtimes-gaussian}
\end{tblr}
\end{table}

\begin{figure}[!b]
    \centering
    \includegraphics[width=\textwidth]{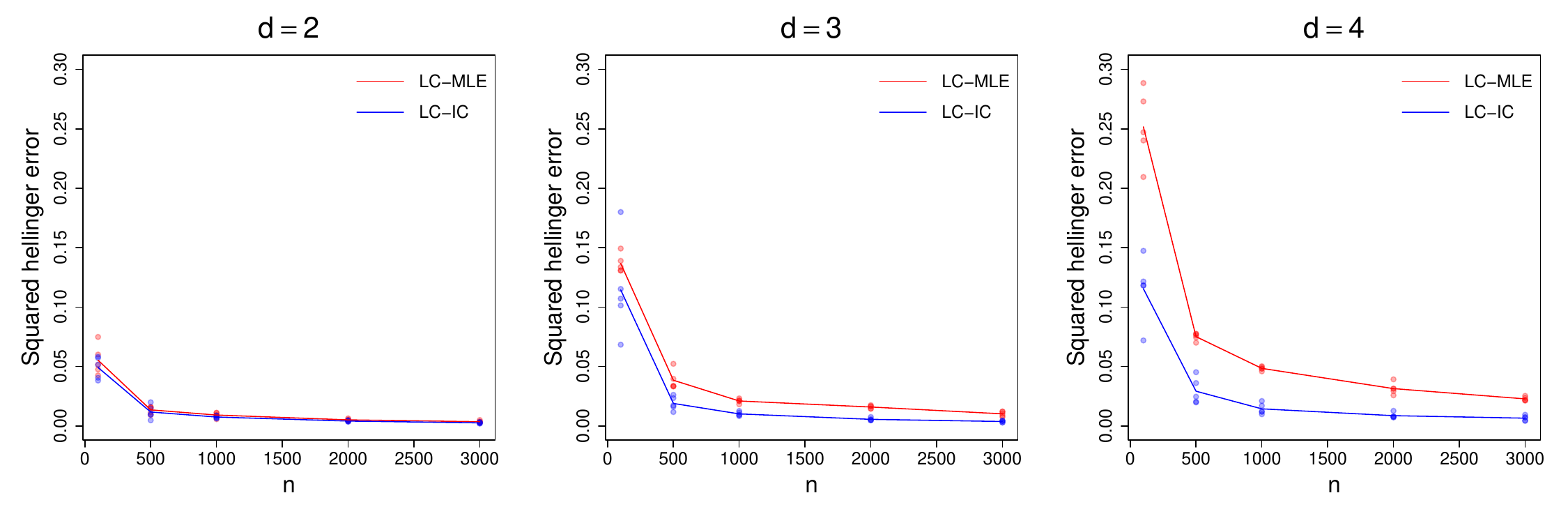}
    \caption{Comparing estimation errors in squared Hellinger distance of our proposed log-concave independent components estimator (LC-IC) with the usual log-concave maximum likelihood estimator (LC-MLE), on simulated multivariate normal data. The dots correspond to independent experiments, and the lines represent averages.
    Here, $n$ is the number of samples, and $d$ is the dimension.}
    \label{fig: error-curves-comparisons-gaussian}
\end{figure}

We first demonstrate estimation performance on bivariate normal data, and provide accompanying visualizations of the estimated density. Recalling that $X = \eigmat^T Z$, we generate $Z \sim \mathcal{N}(0, \VarZ)$ with $\VarZ = \mathrm{Diag}(6, 3)$, and choose the orthogonal matrix $\eigmat$ at random. With $n=1000$ independent samples of $X$ generated, we equally split $M=N=500$, and compute the LC-IC estimate using PCA to estimate $\eigmat$ and \texttt{logcondens} to estimate the marginals. From the same $n=1000$ samples, we also compute the usual log-concave maximum likelihood estimate (LC-MLE) using the \texttt{R} package \texttt{LogConcDEAD} \cite{logconcdead_ref}. \Cref{fig:compare-visually-gaussian} shows the ground truth and estimated densities together with their contour plots, and lists the estimation errors in squared Hellinger distance along with the algorithm runtimes. The integrals needed to obtain the squared Hellinger distances are computed via a simple Monte-Carlo procedure (outlined in \ref{subsec: app monte carlo}); the associated uncertainty is also presented in \Cref{fig:compare-visually-gaussian}. 

{

\begin{table}[!t]
\centering
\caption{Comparing average algorithm runtimes in \textbf{seconds} of our proposed log-concave independent components estimator (LC-IC) with the LogConICA method of \citet{samworth2012independent}, on simulated $\mathrm{Unif}([-1,1]^d)$ data. Here, $n$ is the number of samples, and $d$ is the dimension. For LogConICA, the reported runtimes are averages over 10 random initializations.}
{
\begin{tblr}{
  row{1} = {c},
  cell{2}{1} = {c},
  cell{2}{3} = {r},
  cell{2}{4} = {r},
  cell{2}{5} = {r},
  cell{2}{6} = {r},
  cell{2}{7} = {r},
  cell{3}{1} = {c},
  cell{3}{3} = {r},
  cell{3}{4} = {r},
  cell{3}{5} = {r},
  cell{3}{6} = {r},
  cell{3}{7} = {r},
  cell{4}{1} = {c},
  cell{4}{3} = {r},
  cell{4}{4} = {r},
  cell{4}{5} = {r},
  cell{4}{6} = {r},
  cell{4}{7} = {r},
  cell{5}{1} = {c},
  cell{5}{3} = {r},
  cell{5}{4} = {r},
  cell{5}{5} = {r},
  cell{5}{6} = {r},
  cell{5}{7} = {r},
  cell{6}{1} = {c},
  cell{6}{3} = {r},
  cell{6}{4} = {r},
  cell{6}{5} = {r},
  cell{6}{6} = {r},
  cell{6}{7} = {r},
  cell{7}{1} = {c},
  cell{7}{3} = {r},
  cell{7}{4} = {r},
  cell{7}{5} = {r},
  cell{7}{6} = {r},
  cell{7}{7} = {r},
  hline{2,8} = {-}{},
  hline{4,6} = {1-2}{},
}
      &        & $n=100$ & $n=500$ & $n=1000$ & $n=2000$ & $n=3000$ \\
$d=2$ & LogConICA & 1.27    & 2.87   & 9.17    & 24.92   & 41.64  \\
      & LC-IC  & 0.02    & 0.01    & 0.02     & 0.04     & 0.06     \\
$d=3$ & LogConICA & 0.38    & 2.99   & 4.01   & 15.99   & 31.35  \\
      & LC-IC  & 0.02    & 0.03    & 0.05     & 0.09     & 0.10     \\
$d=4$ & LogConICA & 0.44    & 1.49  & 3.22   & 7.60  & 12.33 \\
      & LC-IC  & 0.02    & 0.04    & 0.10     & 0.12     & 0.18     
    \label{tab: runtimes-unif-samworth-yuan}
\end{tblr}
}
\end{table}

\begin{figure}[!b]
    \centering
    \includegraphics[width=\textwidth]{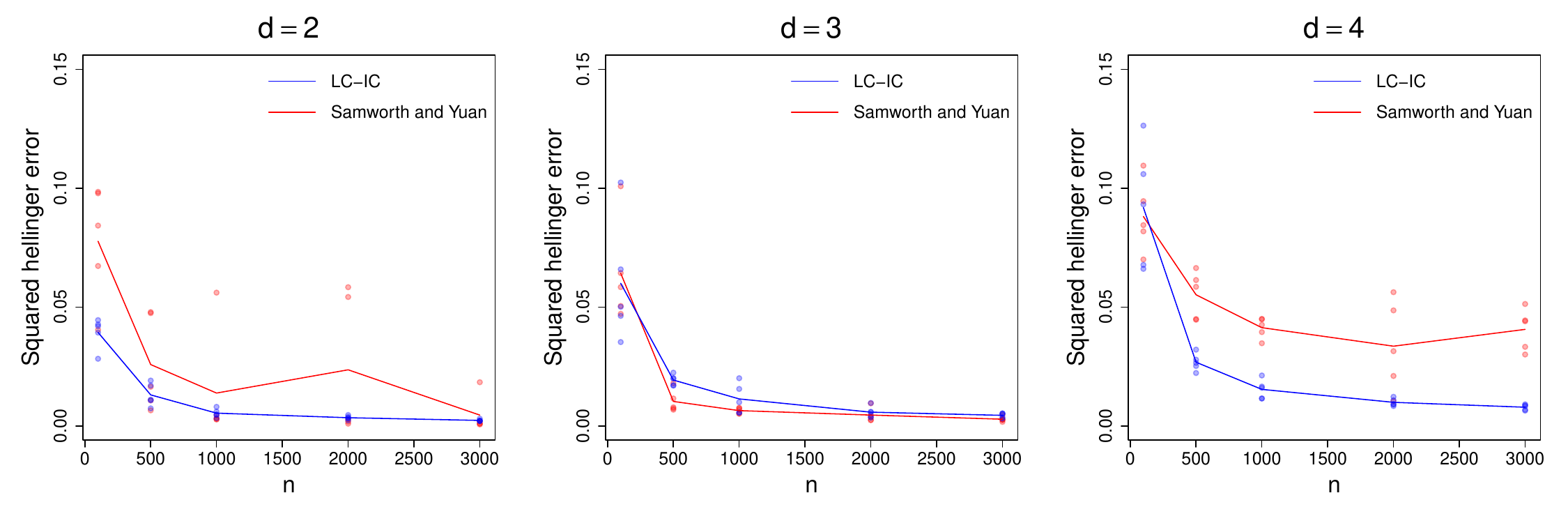}
    \caption{Comparing estimation errors in squared Hellinger distance of our proposed log-concave independent components estimator (LC-IC) with the LogConICA method of \citet{samworth2012independent}, on simulated $\mathrm{Unif}([-1,1]^d)$ data. The dots correspond to independent experiments, and the lines represent averages. Here, $n$ is the number of samples, and $d$ is the dimension.}
    \label{fig: unif ica samworth-yuan}
\end{figure}

}

Next, we conduct systematic comparison tests on simulated data from multivariate normal distributions in $d=2,3,4$, with various values of $n$. As before, set $X = \eigmat^T Z$ with $\eigmat$ chosen at random and  $Z \sim \mathcal{N}(0, \VarZ)$, where $\VarZ = \mathrm{Diag}(15,14)$ for $d=2$, $\VarZ = \mathrm{Diag}(15, 14, 13)$ for $d=3$, and $\VarZ = \mathrm{Diag}(15, 14, 13, 12)$ for $d=4$. PCA is used for estimating $\eigmat$. Each experiment is repeated 5 times and the average metrics are presented. Algorithm runtime comparisons are given in \Cref{tab: runtimes-gaussian}, whereas the estimation errors in squared Hellinger distance are plotted in \Cref{fig: error-curves-comparisons-gaussian}. Notice that LC-IC runtimes are several orders of magnitude smaller than LC-MLE runtimes, especially for large $n$. Also, the estimation errors of LC-IC are lower than those of LC-MLE, with a growing margin as $d$ increases. Similar tests on Gamma-distributed data, are provided in \ref{subsec: gamma comparison tests}.

{

These results are not surprising as the multivariate log-concave MLE is a fully non-parametric procedure that makes no assumptions on $p$ except for log-concavity. A fairer comparison is with the LogConICA algorithm of \citet{samworth2012independent}. On simulated data from the uniform distribution $\mathrm{Unif}([-1,1]^d)$ with $d=2,3,4$, we compare LC-IC (invoking the kurtosis-based FastICA implementation of \texttt{ica}~\cite{ica_package_R} to estimate $\eigmat$, and \texttt{logcondens} to estimate the marginals) with our implementation of LogConICA (using 10 random initializations and a maximum of 20 iterations per initialization).  As before, each experiment is repeated 5 times, and the average metrics are presented. Algorithm runtime comparisons are given in \Cref{tab: runtimes-unif-samworth-yuan}, where we report the average times over the random initializations for LogConICA. The estimation errors in squared Hellinger distance are plotted in \Cref{fig: unif ica samworth-yuan}.

LC-IC still has much shorter runtimes as expected, because it estimates the marginals only once, whereas LogConICA updates those for multiple iterations. Low LogConICA runtimes for larger $d$ is peculiar however; this appears to be because the alternating scheme can get stuck quickly at bad points and subsequently terminate. Next, notice that LogConICA can achieve smaller errors than LC-IC (see the plot for $d=3$), but may occasionally fail to produce good estimates due to getting stuck at bad points. This could be resolved by better-than-random initialization; one may use our LC-IC estimate as a fast and accurate initial point for LogConICA (see \Cref{fig: LogConICA init}).

\begin{figure}
    \centering
    \includegraphics[width=\textwidth]{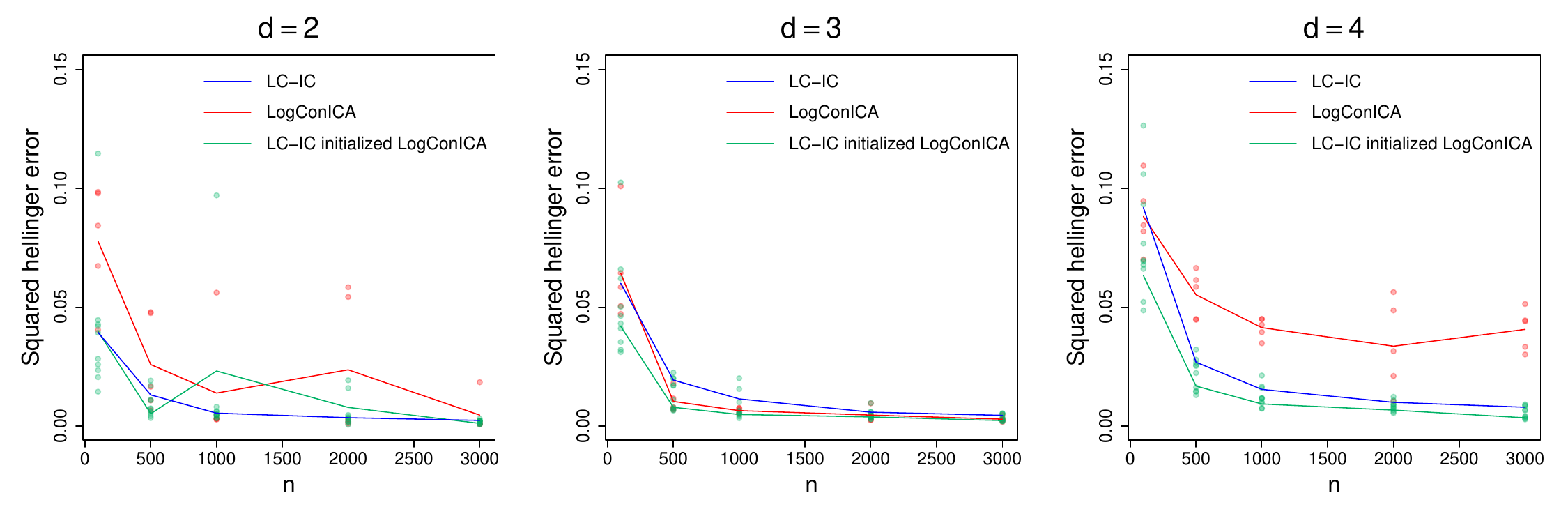}
    \caption{Comparing estimation errors in squared Hellinger distance of LC-IC, LogConICA with random initialization, and LogConICA initialized with LC-IC, on simulated $\mathrm{Unif}([-1,1]^d)$ data. The dots correspond to independent experiments, and the lines represent averages. Here, $n$ is the number of samples, and $d$ is the dimension.}
    \label{fig: LogConICA init}
\end{figure}

}

\subsection{Effect of sample splitting proportions}
\label{subsec: split-r-results}
We assess the effect of changing the sample splitting proportions between $M$ and $N$ on the estimation errors, for multivariate normal data in various dimensions. Using the total number of samples $n=M + N$, we define the parameter $r = M/n$ so that the extreme $r = 0$ corresponds to all samples being used for estimating the marginals, and $r=1$ corresponds to all samples being used for estimating $\eigmat$. 
\begin{figure}[!b]
    \centering
    \includegraphics[width=\textwidth]{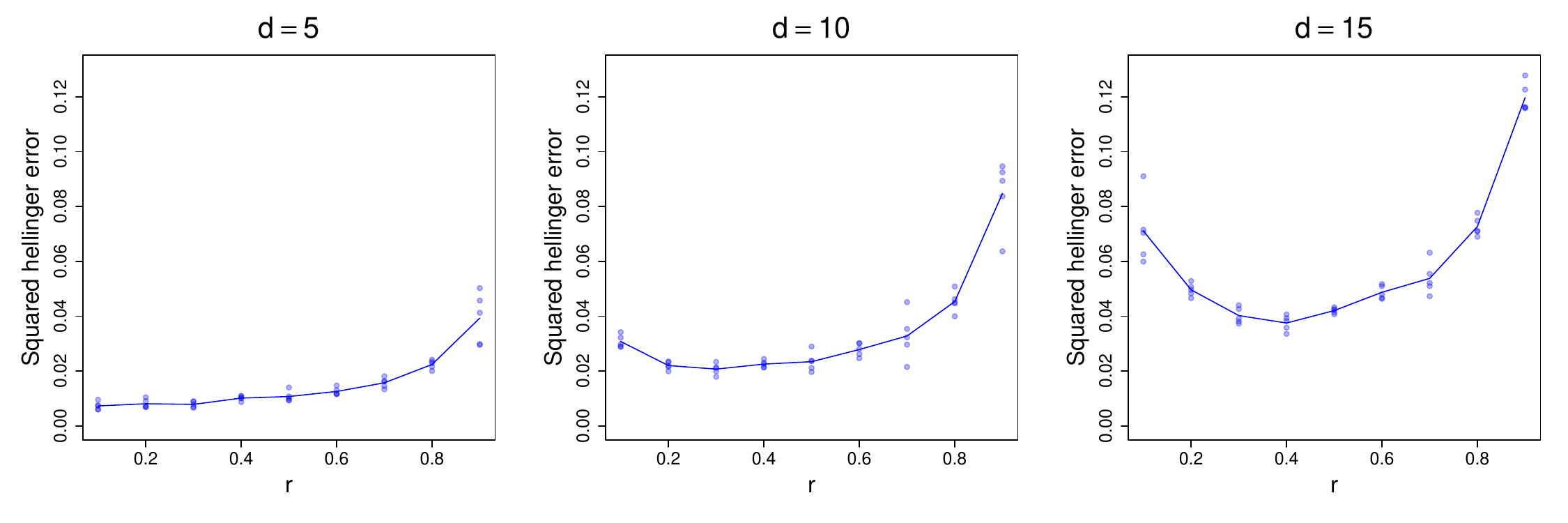}
    \caption{Demonstrating the effect of the sample splitting ratio $r=M/n$ on estimation error in squared Hellinger distance, for multivariate normal data in different dimensions $d$. The dots correspond to independent experiments, and the lines represent averages.}
    \label{fig:three_r_curves}
\end{figure}

As before, $X = \eigmat^T Z$ with the orthogonal matrix $\eigmat$ chosen at random and $Z \sim \mathcal{N}(0,\VarZ)$, where for dimension $d \in \{2, 3, \dots, 9, 10, 15\}$ we set $\VarZ = \mathrm{Diag}(15, 14, \dots, 15 - (d-1))$. With $n=2000$ total samples, the values $r \in \{0.1,0.2, \dots, 0.9\}$ are tested. PCA is used for estimating $\eigmat$. Each experiment is repeated 5 times independently, and the resulting estimation errors in squared Hellinger distance are presented in \Cref{fig:three_r_curves} for $d=5,10,15$.

Let $r^\star$ denote the minimizer of an $r$-versus-error curve for a fixed $d$, and notice that in \Cref{fig:three_r_curves}, $r^\star$~shifts to the right as $d$ increases. This trend is tabulated for all values of $d$ tested in \Cref{tab: r star}. The observed shift is expected since the bounds for $M$ (see \Cref{tab: table M}) have a stronger dependence on $d$ compared to the bounds for $N$ (see \Cref{tab: table N}). 
However, we remark that the effect of $d$ has not been completely isolated in the above experiment. The increase in $d$ is accompanied by a corresponding decrease in $\lambda_{\min}(\VarX)$, which worsens the smoothness (or conditioning) of the density. Keeping $\lambda_{\min}(\VarX)$ fixed, on the other hand, would require shrinking the eigengap $\delta$ with $d$.

{
Finally, we test the case of \emph{no sample splitting} in the setting described above. Here, all $n=2000$ samples are used for both marginal estimation and unmixing matrix estimation. We compare the resulting squared Hellinger errors with those of the usual sample splitting variant with $r=0.4$, and present the results in \Cref{fig: no split compare}. We see, somewhat surprisingly, that the variant without sample splitting achieves lower errors. This suggests that any cross-interactions between the marginal and unmixing matrix estimation stages are mild, and both stages benefit from seeing more samples. 
Our theory from \Cref{sec: analysis}
does not directly work for this variant without sample splitting, however.}
\begin{table}[t]
\centering
\caption{Dependence of the error-minimizing sample splitting ratio $r^\star$ on dimension $d$ for simulated multivariate normal data.}
\vspace{0.5em}
\begin{tabular}{l|llllllllll}
$d$       & 2   & 3& 4& 5& 6& 7& 8& 9& 10&15\\ 
\hline
$r^\star$ & 0.1 & 0.1 & 0.1& 0.1& 0.2& 0.2& 0.3& 0.3& 0.3& 
0.4\end{tabular} \label{tab: r star}
\end{table}

\begin{figure}[h]
    \centering
    \includegraphics[width=\textwidth]{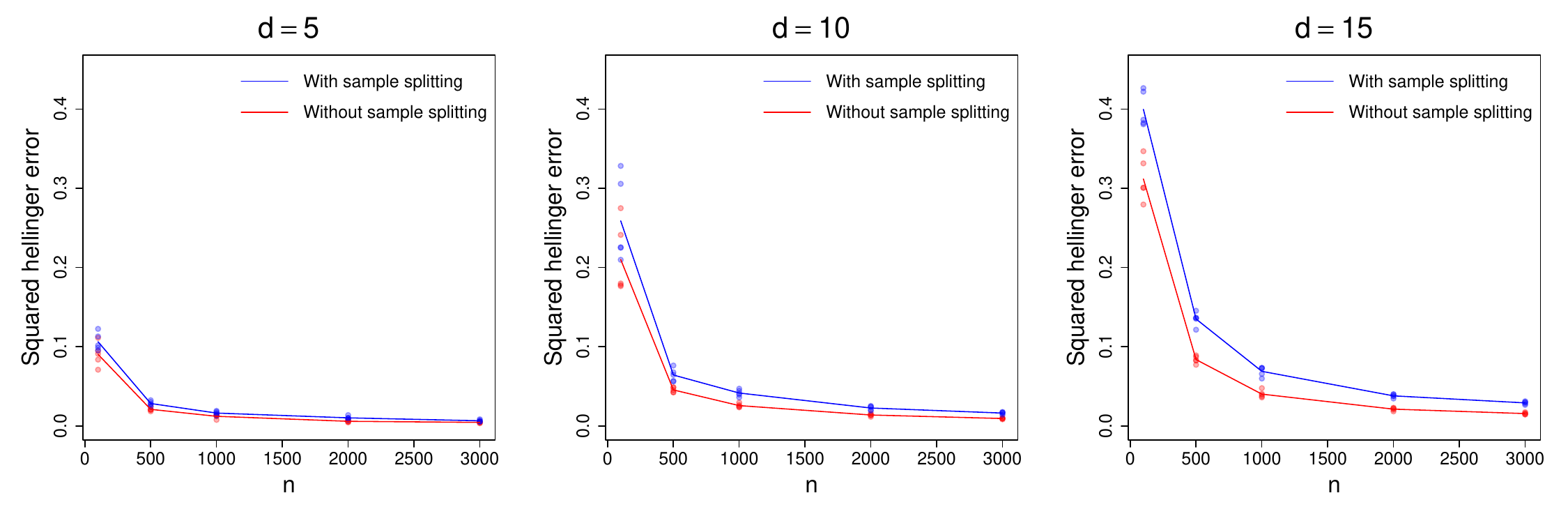}
    \caption{Comparing LC-IC variants with and without sample splitting, on multivariate normal data. The variant with sample splitting uses $r=0.4$. The dots correspond to independent experiments, and the lines represent averages.}
    \label{fig: no split compare}
\end{figure}

\subsection{Estimating mixtures of densities and clustering}
\label{subsec: mixtures}

Consider the problem of estimating finite mixture densities of the form
\begin{align}
    \label{eq: mixture}
    p(x) = \sum_{k=1}^K \pi_k p_k(x),
\end{align}
where the mixture proportions $\pi_1, \dots, \pi_k$ are nonnegative and sum to 1, and $p_1, \dots, p_k: \bR^d \to \bR$ are the component densities corresponding to different clusters. If the component densities are normal, the standard Gaussian EM (expectation maximization) algorithm can be used to estimate $p$. \citet{chang2007clustering} allowed for the component densities to be univariate log-concave by combining log-concave MLE with the EM algorithm. They also considered a normal copula for the multivariate case.
This was later extended to general multivariate log-concave component densities by \citet{cule2010maximum}.

Here, we consider mixture densities of the form \eqref{eq: mixture} where the component densities $p_1, \dots, p_K$ (when centered to mean-zero positions) satisfy the log-concave independent components property. By combining the log-concave independent components estimator (LC-IC) with the EM algorithm, we can port the aforementioned computational benefits to this mixture setting.

We follow the general strategy in \citet[Section 6.1]{cule2010maximum}, replacing their log-concave MLE step with LC-IC. Such a replacement is not completely obvious however. The log-concave MLE in this setting involves weighted samples, which are not immediately compatible with our sample splitting approach. Instead, we use a re-sampling heuristic as outlined in \ref{subsec: EM resampling}.

\begin{figure}[!b]
    \centering
    \includegraphics[width=\textwidth]{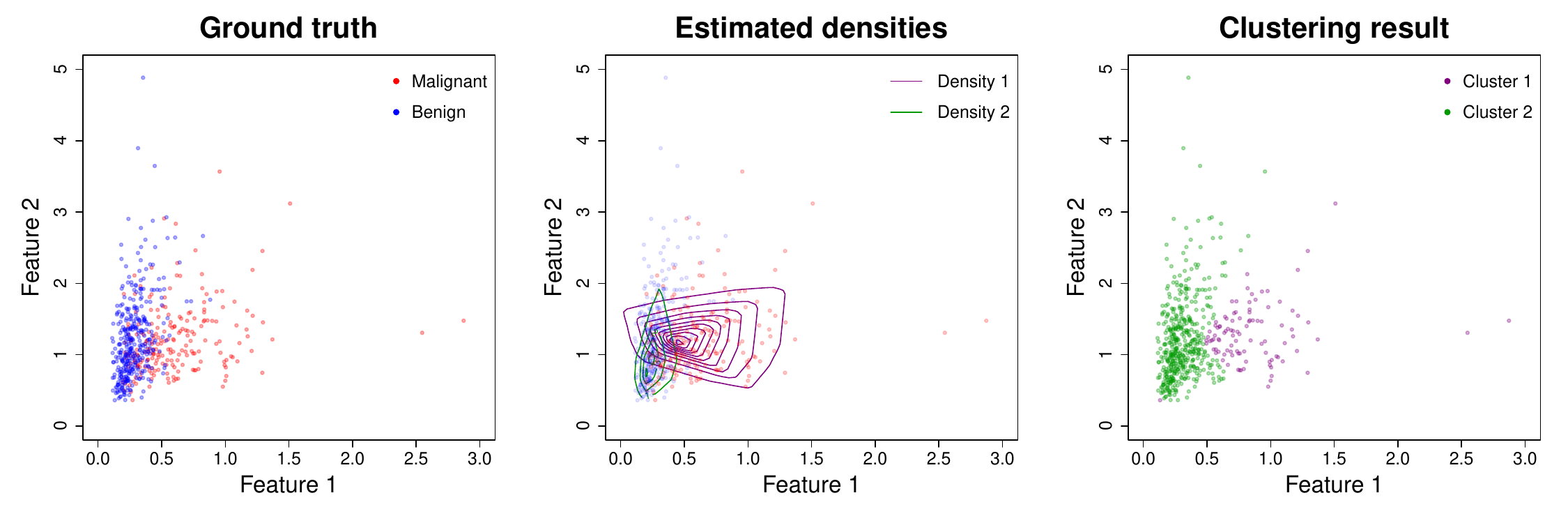}
    \caption{Estimating mixture densities and clustering using two features from the Wisconsin breast cancer dataset \cite{misc_breast_cancer_wisconsin_(diagnostic)_17}. Labeling Cluster 1 as Malignant and Cluster 2 as Benign gives a 77.7\% classification accuracy.}
    \label{fig:clustering-2d}
\end{figure}

Estimated mixture densities can be used for unsupervised clustering, and we test this on real data. To compare the performance of our EM + LC-IC against the results from \citet[Section 6.2]{cule2010maximum}, we use the Wisconsin breast cancer dataset \cite{misc_breast_cancer_wisconsin_(diagnostic)_17}.  
The dataset consists of 569 observations (individuals) and 30 features, where each observation is labeled with a diagnosis of \emph{Malignant} or \emph{Benign}. The test involves clustering the data without looking at the labels, and then comparing the learnt clusters with the ground truth labels.

First, we use the same two features as \citet{cule2010maximum}: Feature 1 -- standard error of the cell nucleus radius, and Feature 2 -- standard error of the cell nucleus texture. This gives bivariate data ($d=2$) with $n=569$ samples. 
We employ EM + LC-IC with $K=2$ target clusters, random initialization, and 20 EM iterations. Again, PCA is used for the $\eigmat$-estimation steps. \Cref{fig:clustering-2d} shows the data with ground truth labels (left-most plot), level curves of the estimated component densities (center plot), and the learnt clusters (right-most plot). Labeling Cluster 1 as Malignant and Cluster 2 as Benign attains a 77.7\% classification accuracy (i.e. 77.7\% of the instances are correctly labeled by the learnt clusters). This is close to the 78.7\% accuracy reported in \citet{cule2010maximum} (121 misclassified instances).

\begin{figure}[!t]
    \centering
    \includegraphics[width=\textwidth]{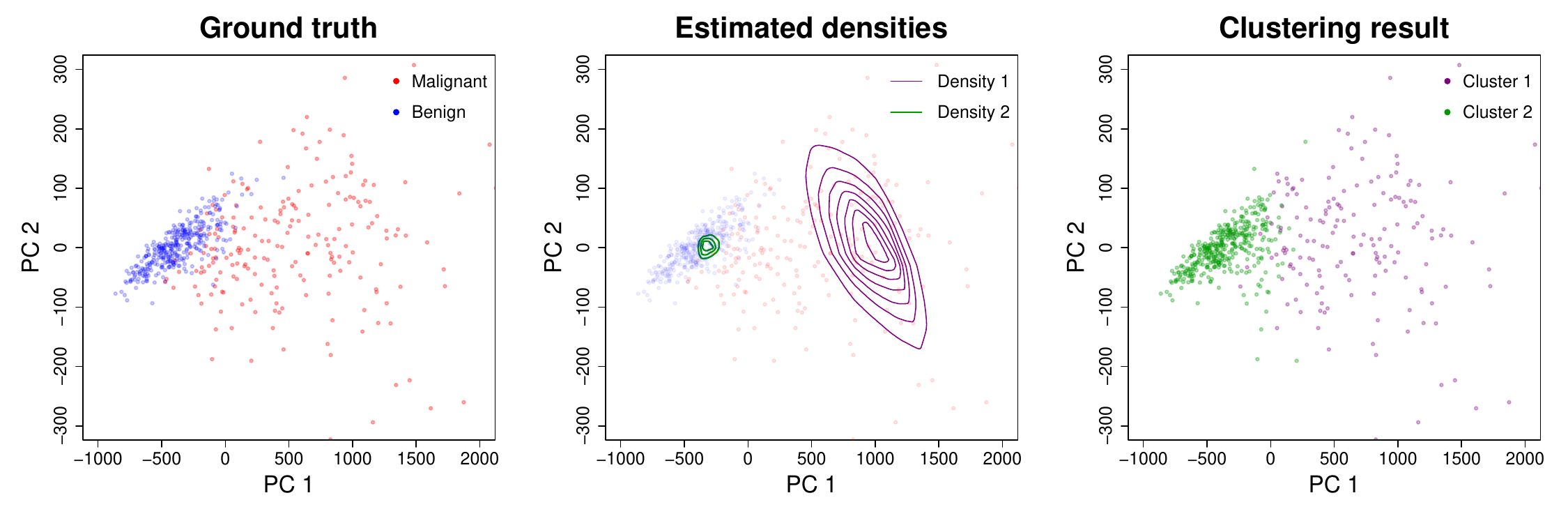}
    \caption{Estimating mixture densities and clustering using all 30 features from the Wisconsin breast cancer dataset \cite{misc_breast_cancer_wisconsin_(diagnostic)_17}. The first two principal components are plotted.
    Labeling Cluster 1 as Malignant and Cluster 2 as Benign gives an 89.6\% classification accuracy.}
    \label{fig:clustering-30d}
\end{figure}

Next, we show improved clustering performance by using more features. Due to the scalability of LC-IC to higher dimensions, one can use all 30 features available in the dataset, so that $d=30$. We employ EM + LC-IC with $K=2$ target clusters as before, choose an initialization based on agglomerative hierarchical clustering (using the \texttt{R} package \texttt{mclust}~\cite{mclust-ref}), and use 15 EM iterations. The results can be visualized in a PCA subspace of the data -- the span of the two leading PCA eigenvectors (or loadings). This particular PCA is only used for visualization, and is \textit{separate} from the PCA used in estimating $\eigmat$.

\Cref{fig:clustering-30d} shows the data in this PCA subspace with ground truth labels (left-most plot), level curves of the estimated component densities (center plot), and the learnt clusters (right-most plot). Labeling Cluster 1 as Malignant and Cluster 2 as Benign achieves an 89.6\% classification accuracy, which is a significant improvement over the results for $d=2$. In comparison, agglomerative hierarchical clustering on its own reaches an accuracy of 86.3\%.

\section{Discussion}
We have looked at statistical and computational aspects of estimating log-concave densities satisfying the orthogonal independent components property. First, we proved that up to {$\tilde{\mathcal{O}}_d(\epsilon^{-4})$} iid samples (suppressing constants and log-factors) are sufficient for our proposed estimator $\hat{p}$ to be $\epsilon$-close to $p$ with high confidence. Although squared Hellinger distance was used here, similar results could have been obtained for total variation (TV) distance since $\mathrm{TV}(\hat{p}, p) \lesssim h_d(\hat{p}, p)$. Note that we do not claim these sample complexity bounds to be tight. {The focus here was on demonstrating univariate rates, and a mild dependence on $d$ under additional assumptions.}
% polynomial dependence on $d$, and it could certainly be possible for the power of $d$ to be reduced further.

The results here could potentially be processed in the framework of \citet{ashtiani2018nearly}, who prove that any family of distributions admitting a so-called sample compression scheme can be learned with few samples. Furthermore, such compressibility extends to products and mixtures of that family.
Hence, demonstrating sample compression in our setting would allow extending the sample complexities to mixtures of log-concave independent components densities, providing some theoretical backing to the results from \Cref{subsec: mixtures}.

Next, the stability results from \Cref{subsec: additional assumptions} could be of further theoretical interest, providing geometric insights into families of densities obtained from orthogonal or linear transformations of product densities. In particular, these could allow one to port covering and bracketing entropies from $d=1$ to higher dimensions by discretizing the operator norm ball in $\bR^{d\times d}$. Also note that these stability bounds are not necessarily tight -- consider a spherically symmetric density $q$ so that $h_d^2(q, \Rpl_{\sharp}q)=0$ for all orthogonal matrices $\Rpl$. Our current analysis does not make use of such symmetry properties. 

On the computational side, we showed that our algorithm has significantly smaller runtimes compared to the usual log-concave MLE {as well as the LogConICA algorithm of} \citet{samworth2012independent}. This was achieved by breaking down the higher dimensional problem into several 1-dimensional ones. In fact these 1-dimensional marginal estimation tasks are decoupled from one another, allowing for effective parallelization to further reduce runtimes. Finally, we demonstrated the scalability of our algorithm by performing density estimation with clustering in $d=30$.

\section*{Acknowledgments}
We thank the anonymous referees for several helpful suggestions, and also thank Yaniv Plan for fruitful discussions.
Elina Robeva was supported by an NSERC Discovery Grant DGECR-2020-00338.

Parts of this paper were written using the ``Latex Exporter" plugin for Obsidian.

\bibliography{refs}

\begin{thebibliography}{40}
\providecommand{\natexlab}[1]{#1}
\providecommand{\url}[1]{\texttt{#1}}
\expandafter\ifx\csname urlstyle\endcsname\relax
  \providecommand{\doi}[1]{doi: #1}\else
  \providecommand{\doi}{doi: \begingroup \urlstyle{rm}\Url}\fi

\bibitem[Adamczak et~al.(2010)Adamczak, Litvak, Pajor, and Tomczak-Jaegermann]{adamczak2010quantitative}
Rados{\l}aw Adamczak, Alexander Litvak, Alain Pajor, and Nicole Tomczak-Jaegermann.
\newblock Quantitative estimates of the convergence of the empirical covariance matrix in log-concave ensembles.
\newblock \emph{Journal of the American Mathematical Society}, 23\penalty0 (2):\penalty0 535--561, 2010.

\bibitem[Ashtiani et~al.(2018)Ashtiani, Ben-David, Harvey, Liaw, Mehrabian, and Plan]{ashtiani2018nearly}
Hassan Ashtiani, Shai Ben-David, Nicholas Harvey, Christopher Liaw, Abbas Mehrabian, and Yaniv Plan.
\newblock Nearly tight sample complexity bounds for learning mixtures of gaussians via sample compression schemes.
\newblock \emph{Advances in Neural Information Processing Systems}, 31, 2018.

\bibitem[Auddy and Yuan(2023)]{auddyLargeDimensionalIndependent2023}
Arnab Auddy and Ming Yuan.
\newblock Large dimensional independent component analysis: Statistical optimality and computational tractability.
\newblock \emph{arXiv preprint arXiv:2303.18156}, 2023.

\bibitem[Bagnoli and Bergstrom(2005)]{bagnoli2005log}
Mark Bagnoli and Ted Bergstrom.
\newblock Log-concave probability and its applications.
\newblock \emph{Economic Theory}, 26\penalty0 (2):\penalty0 445--469, 2005.

\bibitem[Balabdaoui and Wellner(2007)]{balabdaoui2007estimation}
Fadoua Balabdaoui and Jon~A Wellner.
\newblock Estimation of a k-monotone density: {L}imit distribution theory and the spline connection.
\newblock \emph{The Annals of Statistics}, 35\penalty0 (6):\penalty0 2536--2564, 2007.

\bibitem[Barber and Samworth(2021)]{barberLocalContinuityLogconcave2021}
Rina~Foygel Barber and Richard~J Samworth.
\newblock Local continuity of log-concave projection, with applications to estimation under model misspecification.
\newblock \emph{Bernoulli}, 27\penalty0 (4):\penalty0 2437--2472, 2021.

\bibitem[Carpenter et~al.(2018)Carpenter, Diakonikolas, Sidiropoulos, and Stewart]{carpenter2018near}
Timothy Carpenter, Ilias Diakonikolas, Anastasios Sidiropoulos, and Alistair Stewart.
\newblock Near-optimal sample complexity bounds for maximum likelihood estimation of multivariate log-concave densities.
\newblock In \emph{Conference On Learning Theory}, pages 1234--1262. PMLR, 2018.

\bibitem[Chae and Walker(2020)]{CHAE2020108771}
Minwoo Chae and Stephen~G. Walker.
\newblock Wasserstein upper bounds of the total variation for smooth densities.
\newblock \emph{Statistics \& Probability Letters}, 163:\penalty0 108771, 2020.
\newblock ISSN 0167-7152.

\bibitem[Chang and Walther(2007)]{chang2007clustering}
George~T Chang and Guenther Walther.
\newblock Clustering with mixtures of log-concave distributions.
\newblock \emph{Computational Statistics \& Data Analysis}, 51\penalty0 (12):\penalty0 6242--6251, 2007.

\bibitem[Comon and Jutten(2010)]{comon2010handbook}
Pierre Comon and Christian Jutten.
\newblock \emph{Handbook of Blind Source Separation: Independent component analysis and applications}.
\newblock Academic press, 2010.

\bibitem[Cule et~al.(2010)Cule, Samworth, and Stewart]{cule2010maximum}
Madeleine Cule, Richard Samworth, and Michael Stewart.
\newblock Maximum likelihood estimation of a multi-dimensional log-concave density.
\newblock \emph{Journal of the Royal Statistical Society: Series B (Statistical Methodology)}, 72\penalty0 (5):\penalty0 545--607, 2010.

\bibitem[Cule et~al.(2023)Cule, Gramacy, Samworth, and Chen]{logconcdead_ref}
Madeleine Cule, Robert Gramacy, Richard Samworth, and Yining Chen.
\newblock \emph{LogConcDEAD: Log-Concave Density Estimation in Arbitrary Dimensions}, 2023.
\newblock URL \url{https://CRAN.R-project.org/package=LogConcDEAD}.
\newblock R package version 1.6-8.

\bibitem[Dempster et~al.(1977)Dempster, Laird, and Rubin]{dempster1977maximum}
Arthur~P Dempster, Nan~M Laird, and Donald~B Rubin.
\newblock Maximum likelihood from incomplete data via the em algorithm.
\newblock \emph{Journal of the royal statistical society: series B (methodological)}, 39\penalty0 (1):\penalty0 1--22, 1977.

\bibitem[Doss and Wellner(2016)]{doss2016global}
Charles~R Doss and Jon~A Wellner.
\newblock Global rates of convergence of the {MLE}s of log-concave and s-concave densities.
\newblock \emph{The Annals of Statistics}, 44\penalty0 (3):\penalty0 954, 2016.

\bibitem[D{\"u}mbgen et~al.(2011)D{\"u}mbgen, Samworth, and Schuhmacher]{dumbgen_ApproximationLogConcaveDistributions_2011}
Lutz D{\"u}mbgen, Richard Samworth, and Dominic Schuhmacher.
\newblock Approximation by log-concave distributions, with applications to regression.
\newblock \emph{Annals of statistics}, 39\penalty0 (2):\penalty0 702--730, 2011.

\bibitem[Gibbs and Su(2002)]{gibbs2002choosing}
Alison~L Gibbs and Francis~Edward Su.
\newblock On choosing and bounding probability metrics.
\newblock \emph{International statistical review}, 70\penalty0 (3):\penalty0 419--435, 2002.

\bibitem[Goyal et~al.(2014)Goyal, Vempala, and Xiao]{fourier_pca}
Navin Goyal, Santosh Vempala, and Ying Xiao.
\newblock Fourier pca and robust tensor decomposition.
\newblock In \emph{Proceedings of the Forty-Sixth Annual ACM Symposium on Theory of Computing}, STOC '14, page 584–593, New York, NY, USA, 2014. Association for Computing Machinery.
\newblock ISBN 9781450327107.

\bibitem[Grenander(1956)]{grenander_theory_1956}
Ulf Grenander.
\newblock On the theory of mortality measurement: {P}art ii.
\newblock \emph{Scandinavian Actuarial Journal}, 1956\penalty0 (2):\penalty0 125--153, 1956.

\bibitem[Groeneboom et~al.(2001)Groeneboom, Jongbloed, and Wellner]{groeneboom2001estimation}
Piet Groeneboom, Geurt Jongbloed, and Jon~A Wellner.
\newblock Estimation of a convex function: {C}haracterizations and asymptotic theory.
\newblock \emph{The Annals of Statistics}, 29\penalty0 (6):\penalty0 1653--1698, 2001.

\bibitem[Helwig(2022)]{ica_package_R}
Nathaniel~E. Helwig.
\newblock \emph{ica: Independent Component Analysis}, 2022.
\newblock URL \url{https://CRAN.R-project.org/package=ica}.
\newblock R package version 1.0-3.

\bibitem[Holmstr{\"o}m and Klemel{\"a}(1992)]{holmstrom1992asymptotic}
Lasse Holmstr{\"o}m and Jussi Klemel{\"a}.
\newblock Asymptotic bounds for the expected l1 error of a multivariate kernel density estimator.
\newblock \emph{Journal of multivariate analysis}, 42\penalty0 (2):\penalty0 245--266, 1992.

\bibitem[Hyv{\"a}rinen and Oja(2000)]{hyvarinen2000independent}
Aapo Hyv{\"a}rinen and Erkki Oja.
\newblock Independent component analysis: algorithms and applications.
\newblock \emph{Neural networks}, 13\penalty0 (4-5):\penalty0 411--430, 2000.

\bibitem[Kim et~al.(2018)Kim, Guntuboyina, and Samworth]{Kim2018Adaptation}
Arlene~K. Kim, Adityanand Guntuboyina, and Richard~J. Samworth.
\newblock {Adaptation in log-concave density estimation}.
\newblock \emph{The Annals of Statistics}, 46\penalty0 (5):\penalty0 2279 -- 2306, 2018.
\newblock \doi{10.1214/17-AOS1619}.

\bibitem[Kim and Samworth(2016)]{kim2016global}
Arlene~KH Kim and Richard~J Samworth.
\newblock Global rates of convergence in log-concave density estimation.
\newblock \emph{The Annals of Statistics}, 44\penalty0 (6):\penalty0 2756--2779, 2016.

\bibitem[Koenker and Mizera(2010)]{koenker2010quasi}
Roger Koenker and Ivan Mizera.
\newblock Quasi-concave density estimation.
\newblock \emph{The Annals of Statistics}, 38\penalty0 (5):\penalty0 2998--3027, 2010.

\bibitem[Kubjas et~al.(2022)Kubjas, Kuznetsova, Robeva, Semnani, and Sodomaco]{logconcave}
Kaie Kubjas, Olga Kuznetsova, Elina Robeva, Pardis Semnani, and Luca Sodomaco.
\newblock Log-concave density estimation in undirected graphical models.
\newblock \emph{arXiv preprint arXiv:2206.05227}, 2022.

\bibitem[Kur et~al.(2019)Kur, Dagan, and Rakhlin]{kur2019optimality}
Gil Kur, Yuval Dagan, and Alexander Rakhlin.
\newblock Optimality of maximum likelihood for log-concave density estimation and bounded convex regression.
\newblock \emph{arXiv:1903.05315}, 2019.

\bibitem[Lovász and Vempala(2007)]{geom_logconcave_boosting}
László Lovász and Santosh Vempala.
\newblock The geometry of logconcave functions and sampling algorithms.
\newblock \emph{Random Structures \& Algorithms}, 30\penalty0 (3):\penalty0 307--358, 2007.
\newblock \doi{https://doi.org/10.1002/rsa.20135}.

\bibitem[Meyers and Serrin(1964)]{HWdensity}
Norman~G. Meyers and James Serrin.
\newblock {H = W}.
\newblock \emph{Proceedings of the National Academy of Sciences}, 51\penalty0 (6):\penalty0 1055--1056, 1964.
\newblock \doi{10.1073/pnas.51.6.1055}.

\bibitem[Robeva et~al.(2021)Robeva, Sturmfels, Tran, and Uhler]{robeva2021maximum}
Elina Robeva, Bernd Sturmfels, Ngoc Tran, and Caroline Uhler.
\newblock Maximum likelihood estimation for totally positive log-concave densities.
\newblock \emph{Scandinavian Journal of Statistics}, 48\penalty0 (3):\penalty0 817--844, 2021.

\bibitem[Rufibach and Duembgen(2023)]{logcondensref}
Kaspar Rufibach and Lutz Duembgen.
\newblock \emph{logcondens: Estimate a Log-Concave Probability Density from Iid Observations}, 2023.
\newblock URL \url{https://CRAN.R-project.org/package=logcondens}.
\newblock R package version 2.1.8.

\bibitem[Samworth and Yuan(2012)]{samworth2012independent}
Richard Samworth and Ming Yuan.
\newblock {Independent component analysis via nonparametric maximum likelihood estimation}.
\newblock \emph{The Annals of Statistics}, 40\penalty0 (6):\penalty0 2973 -- 3002, 2012.
\newblock \doi{10.1214/12-AOS1060}.

\bibitem[Samworth(2018)]{samworth2018recent}
Richard~J. Samworth.
\newblock {Recent Progress in Log-Concave Density Estimation}.
\newblock \emph{Statistical Science}, 33\penalty0 (4):\penalty0 493 -- 509, 2018.
\newblock \doi{10.1214/18-STS666}.
\newblock URL \url{https://doi.org/10.1214/18-STS666}.

\bibitem[Schuhmacher and D{\"u}mbgen(2010)]{schuhmacher2010consistency}
Dominic Schuhmacher and Lutz D{\"u}mbgen.
\newblock Consistency of multivariate log-concave density estimators.
\newblock \emph{Statistics \& probability letters}, 80\penalty0 (5-6):\penalty0 376--380, 2010.

\bibitem[Scrucca et~al.(2023)Scrucca, Fraley, Murphy, and Raftery]{mclust-ref}
Luca Scrucca, Chris Fraley, T.~Brendan Murphy, and Adrian~E. Raftery.
\newblock \emph{Model-Based Clustering, Classification, and Density Estimation Using {mclust} in {R}}.
\newblock Chapman and Hall/CRC, 2023.
\newblock ISBN 978-1032234953.
\newblock \doi{10.1201/9781003277965}.
\newblock URL \url{https://mclust-org.github.io/book/}.

\bibitem[Vershynin(2018)]{vershynin2018high}
Roman Vershynin.
\newblock \emph{High-dimensional probability: An introduction with applications in data science}, volume~47.
\newblock Cambridge university press, 2018.

\bibitem[Walther(2002)]{walther2002detecting}
Guenther Walther.
\newblock Detecting the presence of mixing with multiscale maximum likelihood.
\newblock \emph{Journal of the American Statistical Association}, 97\penalty0 (458):\penalty0 508--513, 2002.

\bibitem[Wolberg et~al.(1995)Wolberg, Mangasarian, Street, and Street]{misc_breast_cancer_wisconsin_(diagnostic)_17}
William Wolberg, Olvi Mangasarian, Nick Street, and W.~Street.
\newblock {Breast Cancer Wisconsin (Diagnostic)}.
\newblock UCI Machine Learning Repository, 1995.
\newblock {DOI}: https://doi.org/10.24432/C5DW2B.

\bibitem[Xu and Samworth(2021)]{xuHighdimensionalNonparametricDensity2021}
Min Xu and Richard~J. Samworth.
\newblock {High-dimensional nonparametric density estimation via symmetry and shape constraints}.
\newblock \emph{The Annals of Statistics}, 49\penalty0 (2):\penalty0 650 -- 672, 2021.
\newblock \doi{10.1214/20-AOS1972}.
\newblock URL \url{https://doi.org/10.1214/20-AOS1972}.

\bibitem[Yu et~al.(2015)Yu, Wang, and Samworth]{yuUsefulVariantDavis2015}
Yi~Yu, Tengyao Wang, and Richard~J Samworth.
\newblock A useful variant of the davis--kahan theorem for statisticians.
\newblock \emph{Biometrika}, 102\penalty0 (2):\penalty0 315--323, 2015.

\end{thebibliography}

\newpage
\appendix

\section{Delayed proofs}
\label{sec: app A}
Here, we present the proofs that were deferred in the main text. We will need some additional notation. Let $\mathcal{P}_{d}$ be the set of probability distributions $P$ on $\mathbb{R}^{d}$ satisfying $\mathbb{E}_{X \sim P} \|X\|_{2} < \infty$ and $\mathbb{P}_{X \sim P}(X \in H) < 1$ for every hyperplane $H \subseteq \mathbb{R}^{d}$. The log-concave projection of \citet{dumbgen_ApproximationLogConcaveDistributions_2011} is a map from $\mathcal{P}_{d}$ to the set of log-concave densities $\mathcal{F}_{d}$ on $\mathbb{R}^d$, defined by
\begin{equation*}
\psi_{d}^*(P) := \argmax_{f \in\mathcal{F}_{d}}\,\,\mathbb{E}_{X \sim P}\log f(X).
\end{equation*}
Also define, given $X \sim P \in \mathcal{P}_d$ the minimal first moment
$$
\eta_{P} := \inf_{s \in \mathbb{S}^{d-1}}\,\mathbb{E}\left| s \cdot (X-\mathbb{E}X) \right|.
$$

\subsection{Proof of \Cref{res: basic error bound}}
\label{prf: basic error bound}
We recall the statement below.
\paragraph{Statement}
{Suppose $p$ is a zero-mean probability density on $\mathbb{R}^d$, satisfying the orthogonal independent components property with independent directions $w_{1}, \dots, w_{d}$.} Let $\hat{p}$ be the proposed estimator (\Cref{def: proposed estimator}). We have the error bound
\begin{align*}
    h_d(\hat{p}, p) &\leq \sqrt{d} \, \max_{i\in [d]} h_1(\hat{p}_{\hat{w}_i}, p_{\hat{w}_i}) 
    + {\sqrt{d}\,h_d\left(\hat{\mathbf{R}}_{\sharp}p \, , \, p \right) + h_d\left(\hat{\mathbf{R}}^T_{\sharp}p \, , \, p \right)},
\end{align*}
where $\hat{\mathbf{R}} = \eigmat^T \hat{\eigmat}$.    

\begin{proof}
For $s \in \mathbb{S}^{d-1}$, denote the projection $\pi_s (x) = s \cdot x$. This allows us to write the proposed estimator as 
\begin{align}
    \hat{p}(x) = \prod_{i=1}^d \hat{p}_{\hat{w}_i}(\hat{w}_i \cdot x) = \prod_{i=1}^d \hat{p}_{\hat{w}_i} \circ \pi_{\hat{w}_i} (x).
\end{align}
We can break $h_d(\hat{p}, p)$ down into three terms that need to be controlled:
\begin{align}
\begin{aligned}
    h_d(\hat{p}, p) &= h_d\left( \prod_{i} \hat{p}_{\hat{w}_i} \circ \pi_{\hat{w}_i} \, , \, \prod_{i} p_{w_i} \circ \pi_{w_i} \right)  \\ 
    &\leq h_d\left( \prod_{i} \hat{p}_{\hat{w}_i} \circ \pi_{\hat{w}_i} \, , \, \prod_{i} p_{\hat{w}_i} \circ \pi_{\hat{w}_i} \right)  \\ 
    &\phantom{\leq} + h_d\left( \prod_{i} p_{\hat{w}_i} \circ \pi_{\hat{w}_i} \, , \, \prod_{i} p_{w_i} \circ \pi_{\hat{w}_i} \right)
    + h_d\left(\prod_{i} p_{w_i} \circ \pi_{\hat{w}_i}  \, , \, \prod_{i} p_{w_i} \circ \pi_{w_i} \right).
\end{aligned}
\label{eq: hellinger split}
\end{align}
This is essentially \textit{removing hats} one-at-a-time. The first term is handled as
\begin{align}
h_d^2\left( \prod_{i} \hat{p}_{\hat{w}_i} \circ \pi_{\hat{w}_i} \, , \, \prod_{i} p_{\hat{w}_i} \circ \pi_{\hat{w}_i} \right) \leq d \, \max_{i\in [d]} h_1^2(\hat{p}_{\hat{w}_i}, p_{\hat{w}_i}),
\end{align}
using the simple tensorization argument from \Cref{lemma: h oracle}. Since  $\prod_{i} p_{w_i} \circ \pi_{\hat{w}_i} (x) = p(\hat{\mathbf{R}}x)$, {which is the density of $\hat{\mathbf{R}}_{\sharp}^T p$,} we get that the third term equals {$h_d(\hat{\mathbf{R}}_{\sharp}^T p, p)$}.

% Bounding the second term requires the following lemma.
% \begin{lemma}
%     \label{lem: hellinger 1d bounded by d}
%     Let $p$ and $w_1, \dots, w_d$ be defined as before, and consider any other probability density $q:\bR^d \to [0, \infty)$. Then,
%     \begin{align*}
%         h_1^2 (p_{w_k}, q_{w_k}) \leq h_d^2(p,q),
%     \end{align*}
%     for $k\in [d]$.
% \end{lemma}
% The above lemma bounds the 1-dimensional Hellinger distance between marginals by the $d$-dimensional Hellinger distance between the full densities. This is a special case of the \emph{data processing inequality} for $f$-divergences, but we nevertheless provide a proof in \ref{prf: hellinger 1d bounded by d}. 
Finally, consider the second term. Noting that $p_{\hat{w}_k} = (\hat{\mathbf{R}}_{\sharp} p )_{w_k}$ and using \Cref{lem: hellinger 1d bounded by d} gives
    \begin{align}
    \label{eq: 1d bound by d}
        h_1^2(p_{\hat{w}_i}, p_{w_i}) = h_1^2\left((\hat{\mathbf{R}}_{\sharp} p)_{w_i}, \, p_{w_i}\right) \leq h_d^2\left(\hat{\mathbf{R}}_{\sharp} p, \, p\right).
    \end{align}
Now, to bound the second term, note that
    \begin{align*}
        1 - h_d^2\left( \prod_{i} p_{\hat{w}_i} \circ \pi_{\hat{w}_i} \, , \, \prod_{i} p_{w_i} \circ \pi_{\hat{w}_i} \right)
        &= \int_{\bR^d} \sqrt{\prod_{i} p_{\hat{w}_i}(\hat{w}_i \cdot x) \, p_{w_i}(\hat{w}_i \cdot x) }\, dx \\
        &= \prod_{i} \int_{\bR} \sqrt{p_{\hat{w}_i}(z'_i) \, p_{w_i}(z'_i) }\, dz'_i,
    \end{align*}
via the change of variables $z' = \hat{\eigmat}x$. The above expression can be lower bounded using (\ref{eq: 1d bound by d}):
    \begin{align*}
        \prod_{i} \int_{\bR} \sqrt{p_{\hat{w}_i}(z'_i) \, p_{w_i}(z'_i) }\, dz'_i &= \prod_{i} \left( 1 - h_1^2(p_{\hat{w}_i}, p_{w_i}) \right)
        \geq \prod_{i} \left(1 - h_d^2\left(\hat{\mathbf{R}}_{\sharp} p, \, p\right) \right) \\
        &= \left(1 - h_d^2\left(\hat{\mathbf{R}}_{\sharp} p, \, p\right) \right)^{d}
        \geq 1 - d\cdot h_d^2\left(\hat{\mathbf{R}}_{\sharp} p, \, p\right),
    \end{align*}
    where the last bound follows from Bernoulli's inequality. Hence, 
    \begin{align*}
        h_d^2\left( \prod_{i} p_{\hat{w}_i} \circ \pi_{\hat{w}_i} \, , \, \prod_{i} p_{w_i} \circ \pi_{\hat{w}_i} \right) \leq d \cdot h_d^2\left(\hat{\mathbf{R}}_{\sharp} p \, , \, p \right),
    \end{align*}
which completes the last bit of the proof of \Cref{res: basic error bound}.
\end{proof}

\subsection{Proof of \Cref{res: hellinger stability general log-concave}}
\label{prf: hellinger stability general log-concave}
We recall the statement below.
\paragraph{Statement}
Let $p$ be a zero-mean log-concave density on $\mathbb{R}^d$, and let $\mathbf{A} \in\mathbb{R}^{d \times d}$ be any invertible matrix. Then,
\begin{equation*}
h_{d}^2(p, \mathbf{A}_{\sharp}p) \leq K_{d} \,\mathrm{cond}(\mathbf{\Sigma})^{1/4}\,\op{\mathbf{I}-\mathbf{A}}^{1/2},
\end{equation*}
where $\mathbf{\Sigma}$ is the covariance matrix of $p$, and the constant $K_{d}>0$ depends only on the dimension $d$.

\begin{proof}
We will use properties of the log-concave projection $\psi^*$. Firstly, $p$ is a log-concave density by assumption, and by the linearity and invertibility of $\mathbf{A}$, so is $\mathbf{A}_{\sharp}p$. As a result, $\psi^*(p)=p$ and $\psi^*(\mathbf{A}_{\sharp}p)=\mathbf{A}_{\sharp}p$. By Theorem 2 of \citet{barberLocalContinuityLogconcave2021}, 
\begin{equation*}
h_{d}^{2}(p, \mathbf{A}_{\sharp}p) \leq C_{d}^2 \left[ \frac{W_{1}(p, \mathbf{A}_{\sharp}p)}{\eta_{p} \vee \eta_{\mathbf{A}_{\sharp}p}} \right]^{1/2},
\end{equation*}
where $C_{d}>0$ is a constant that depends only on $d$. We will conclude by upper-bounding the Wasserstein-1 distance $W_{1}(p, \mathbf{A}_{\sharp}p)$ and lower bounding $\eta_{p}$.

By \Cref{res: stability in W1}, we have $W_{1}(p, \mathbf{A}_{\sharp}p) \leq \sqrt{ \mathrm{tr}(\mathbf{\Sigma}) } \,\op{\mathbf{I-A}} \leq \lambda_{\max}(\mathbf{\Sigma})^{1/2}d^{1/2}\,\op{\mathbf{I}-\mathbf{A}}$. On the other hand, by \Cref{res: lower bd eta}, one can lower bound $\eta_{p} \geq c \,\lambda_{\min}(\mathbf{\Sigma})^{1/2}$, where $c>0$ is an absolute constant (say $c=1 /1600$). The result then follows with $K_{d}=C_{d}^2 \,d^{1/4} c^{-1/2}$, recalling that $\mathrm{cond}(\mathbf{\Sigma}):= \lambda_{\max}(\mathbf{\Sigma}) /\lambda_{\min}(\mathbf{\Sigma})$. 
\end{proof}

\subsection{Proof of \Cref{res: generic sample complexity proposed}}
\label{prf: generic sample complexity proposed}
We recall the statement below.
\paragraph{Statement (brief)}
Let $p$ be a zero-mean log-concave density on $\mathbb{R}^d$ satisfying the orthogonal independent components property, and let $\hat{p}$ be the proposed estimator. Then, for any $\epsilon>0$ and $0 < \gamma<1$, we have 
\begin{equation*}
h_{d}^2(\hat{p},p) \leq \epsilon
\end{equation*}
with probability at least $1-\gamma$ whenever
\begin{subequations}
\begin{equation*}
N \geq \sup_{s \in \mathbb{S}^{d-1}}N^*\left( \epsilon /4d,\, \gamma /2d;\, p_{s} \right),
\end{equation*}
and
\begin{equation*}
M \geq M^*\left( (K_{d}'\,\mathrm{cond}(\mathbf{\Sigma}))^{-1/2}\epsilon^2,\, \gamma /2;\, p \right).
\end{equation*}
\end{subequations}

\begin{proof}
By \Cref{res: basic error bound}, decompose $h_{d}(\hat{p}, p)$ as
\begin{equation*}
\begin{aligned}
h_d(\hat{p}, p) &\leq \sqrt{d} \, \max_{i\in [d]} h_1(\hat{p}_{\hat{w}_i}, p_{\hat{w}_i}) 
    + \sqrt{d}\,h_d\left(\hat{\mathbf{R}}_{\sharp}p \, , \, p \right) + h_d\left(\hat{\mathbf{R}}^T_{\sharp}p \, , \, p \right),
\end{aligned}
\end{equation*}
where $\hat{\mathbf{R}} = \mathbf{W}^T \hat{\mathbf{W}}$. We show that $\sqrt{ d } \,\max_{i \in[d]} h_{1}(\hat{p}_{\hat{w}_{i}}, p_{\hat{w}_{i}}) \leq \sqrt{ \epsilon } /2$ holds with probability at least $1-\gamma /2$ when $N$ is large enough, and that $\sqrt{d}\,h_d\left(\hat{\mathbf{R}}_{\sharp}p \, , \, p \right) + h_d\left(\hat{\mathbf{R}}^T_{\sharp}p \, , \, p \right) \leq \sqrt{ \epsilon } /2$ holds with probability at least $1-\gamma /2$ when $M$ is large enough. A final union bound then gives the desired result.

We handle $\sqrt{ d } \,\max_{i \in[d]} h_{1}(\hat{p}_{\hat{w}_{i}}, p_{\hat{w}_{i}})$ first. Write $\mathbf{X} = (X^{(1)}, \dots,X^{(N)})$ and $\mathbf{Y}=(Y^{(1)},\dots,Y^{M})$, and denote the marginal error $D_{i}(\mathbf{X}, \mathbf{Y}):= h_{1}^2(\hat{p}_{\hat{w}_{i}}, p_{\hat{w}_{i}})$ for $i=1,\dots,d$ to highlight the dependence on $\mathbf{X}$ and $\mathbf{Y}$. Then, one can use the independence of $\mathbf{X}$ and $\mathbf{Y}$ to express, for any $\tilde{\epsilon}>0$, 
\begin{equation*}
\begin{aligned}
\mathbb{P}\left\{ D_{i}(\mathbf{X}, \mathbf{Y}) > \tilde{\epsilon} \right\} = \mathbb{E}\,\mathbf{1}{\{ D_{i}(\mathbf{X}, \mathbf{Y}) > \tilde{\epsilon} \}} = \mathbb{E}_{\mathbf{Y}} [\mathbb{E}_{\mathbf{X}}\,\mathbf{1}\{ D_{i}(\mathbf{X}, \mathbf{Y})>\tilde{\epsilon} \}],
\end{aligned}
\end{equation*}
where $\mathbf{1}\{\cdot\}$ denotes the indicator of an event, and $\mathbb{E}_{\mathbf{X}}$, $\mathbb{E}_{\mathbf{Y}}$ denote expectation with respect to the distributions of $\mathbf{X}$ and $\mathbf{Y}$ respectively. As a result, it suffices to bound $\mathbb{E}_{\mathbf{X}}\,\mathbf{1}\{ D_{i}(\mathbf{X}, \mathbf{y})>\tilde{\epsilon} \}$ for any deterministic $\mathbf{y} \in (\mathbb{R}^{d})^{N}$. 

Recall that the estimated independent directions $\hat{w}_{1}, \dots, \hat{w}_{d}$ are computed from the samples in $\mathbf{Y}$. For a deterministic $\mathbf{y}$, the corresponding $\hat{w}_{1}(\mathbf{y}), \dots, \hat{w}_{d}(\mathbf{y})$ are deterministic as well. In this case, the projected samples $\{\hat{w}_{i}(\mathbf{y}) \cdot X^{(j)}\}_{j=1}^{N}$ are independent and identically distributed with law $p_{\hat{w}_{i}(\mathbf{y})}$, and as a result,
\begin{equation*}
\begin{aligned}
\mathbb{E}_{\mathbf{X}}\,\mathbf{1}\{ D_{i}(\mathbf{X}, \mathbf{y})>\tilde{\epsilon} \} = \mathbb{P}_{\mathbf{X}}\{ h_{i}^2(\hat{p}_{\hat{w}_{i}(\mathbf{y})},p_{\hat{w}_{i}(\mathbf{y})}) > \tilde{\epsilon} \} \leq \tilde{\gamma}
\end{aligned}
\end{equation*}
holds for $\tilde{\gamma}\in(0,1)$ provided $N \geq \max_{s \in \mathbb{S}^{d-1}} N^*(\tilde{\epsilon}, \tilde{\gamma}; p_{s}) \geq N^*(\tilde{\epsilon}, \tilde{\gamma}; p_{\hat{w}_{i}}(\mathbf{y}))$. Now, re-introducing the randomness in $\mathbf{Y}$ and taking expectation gives $\mathbb{P}\{ D_{i}(\mathbf{X}, \mathbf{Y})>\tilde{\epsilon} \} \leq \tilde{\gamma}$. We set $\tilde{\epsilon}=\epsilon /4d$ and $\tilde{\gamma}=\gamma /2d$, and use a union bound over $i=1,\dots,d$ to finally get
\begin{equation*}
\mathbb{P}\left\{ \sqrt{ d }\,\max_{i \in [d]} h_{1}(\hat{p},p) > \sqrt{ \epsilon } /2 \right\} \leq \gamma /2.
\end{equation*}

Now, we handle the remaining terms. By \Cref{res: hellinger stability general log-concave} and \eqref{eq: op norm w closeness}, we have
\begin{equation*}
\begin{aligned}
h_d^2\left(\hat{\mathbf{R}}_{\sharp}p \, , \, p \right) &\leq K_{d} \,\mathrm{cond}(\mathbf{\Sigma})^{1/4} \op{\mathbf{I}-\hat{\mathbf{R}}}^{1/2} \\
&\leq d^{1/4}\,K_{d}\,\mathrm{cond}(\mathbf{\Sigma})^{1/4}\left( d^{-1} \textstyle{\sum_{i=1}^d \|\hat{w}_{i}-w_{i}\|_{2}^2} \right)^{1/4},
\end{aligned}
\end{equation*}
and similarly for $h_{d}^2\left( \hat{\mathbf{R}}^T_{\sharp}p, p \right)$. For any $\tilde{\epsilon}>0$, note that $M \geq M^*(\tilde{\epsilon}, \gamma /2; p)$ samples are sufficient to give 
\begin{equation*}
\begin{aligned}
\sqrt{d}\,h_d\left(\hat{\mathbf{R}}_{\sharp}p \, , \, p \right) + h_d\left(\hat{\mathbf{R}}^T_{\sharp}p \, , \, p \right) &\leq (\sqrt{ d }+1)d^{1/8}K_{d}^{1/2}\,\mathrm{cond}(\mathbf{\Sigma})^{1/8} \tilde{\epsilon}^{1/4}
\end{aligned}
\end{equation*}
with probability at least $1 - \gamma /2$. Setting $\tilde{\epsilon} = (K_{d}'\,\mathrm{cond}(\mathbf{\Sigma}))^{-1/2}\epsilon^2$ for a suitable constant $K_{d}'>0$ (depending only on $d$) then gives
\begin{equation*}
\mathbb{P}\left\{ \sqrt{d}\,h_d\left(\hat{\mathbf{R}}_{\sharp}p \, , \, p \right) + h_d\left(\hat{\mathbf{R}}^T_{\sharp}p \, , \, p \right) > \sqrt{ \epsilon } /2 \right\} \leq \gamma /2,
\end{equation*}
which concludes the proof.
\end{proof}

\subsection{Proof of \Cref{res: sample complexity 1d log-concave MLE}}
\label{prf: sample complexity 1d log-concave MLE}
We recall the statement below.
\paragraph{Statement}
Let $f: \mathbb{R} \to [0, \infty)$ be a univariate log-concave density, and denote by $\hat{f}$ the log-concave MLE computed from $N$ iid samples from $f$. Given $\epsilon >0$ and $0 < \gamma < 1$, we have 
$$
h_{1}^{2}(\hat{f}, f) \leq \epsilon
$$
with probability at least $1-\gamma$ whenever
$$
N \geq \frac{C_{1}}{\epsilon^{5/4}} \log^{5/4}\left( \frac{C_{2}}{\gamma} \right) \vee \frac{C_{3}}{\gamma},
$$
for absolute constants $C_1, C_2, C_3 >0$. 

\begin{proof}
Lemma 6 and the proof of Theorem 5 of \citet{kim2016global} imply the concentration bound
\begin{equation*}
\mathbb{P}\left( h_{1}^{2}(\hat{f}, f) \geq n^{-4/5}t \right) \leq C'e^{ -ct } + C''n^{-1}
\end{equation*}
for suitably large absolute constants $C', C'' >0$. Setting $t=n^{4/5}\epsilon$ in the concentration bound gives that
\begin{equation*}
\mathbb{P}\left( h_{1}^{2}(\hat{f}, f) \geq \epsilon \right) \leq C'e^{ -cn^{4/5}\epsilon } + C''n^{-1}.
\end{equation*}
Now, $C'e^{ -cn^{4/5}\epsilon }\leq\gamma /2$ holds provided $n \geq \left[ (c\epsilon)^{-1}\log\left( 2C' /\gamma \right) \right]^{5/4}$, and $C''n^{-1} \leq \gamma /2$ holds provided $n \geq 2C'' /\gamma$. Setting $C_{1}=c^{-5/4}$, $C_{2}=2C'$ and $C_{3}=2C''$ concludes the proof.
\end{proof}

\subsection{Proof of \Cref{res: boosted sample complexity}}
\label{prf: boosted sample complexity}
We recall the statement below.
\paragraph{Statement}
Let $f: \mathbb{R} \to [0, \infty)$ be a univariate log-concave density, and fix $\epsilon>0$. Denote by $\hat{\hat{f}}_\epsilon$ the boosted log-concave MLE at error threshold $\epsilon$, computed from $q$  independent batches of $n$ iid samples from $f$ each. 
Given $0 < \gamma < 1$, we have 
\begin{equation*}
h_{1}^{2}(\hat{\hat{f}}_\epsilon,f) \leq \epsilon
\end{equation*}
with probability at $1-\gamma$ provided 
\begin{equation*}
n \geq \frac{C_{1}}{\epsilon^{5/4}} \quad \text{and} \quad q \geq C_{2} \log \left( \frac{1}{\gamma} \right),
\end{equation*}
for absolute constants $C_1, C_2 \geq 0$.

\begin{proof}
We follow the proof of Theorem 2.8 of \citet{geom_logconcave_boosting}. Firstly, $n \geq C_{1}\epsilon^{-5/4}$ for a sufficiently large $C_{1}>0$ ensures (by Markov's inequality) that
\begin{equation*}
\mathbb{P}\left\{ h_{1}(\hat{f}^{[b]}, f) \leq \sqrt{ \epsilon } /3 \right\}  \geq 0.9
\end{equation*}
for each batch $b$. Then, by Chernoff's inequality, strictly more than ($q /2$)-many batches $b$ satisfy $h_{1}(\hat{f}^{[b]},f) \leq \sqrt{ \epsilon } /2$ simultaneously, with probability at least $1 - 0.9^q$. Under this event, suppose $\hat{f}^{[b^\star]}$ is returned. Then there exists some $\hat{f}^{[b']}$ such that $h_{1}(\hat{f}^{[b^\star]}, \hat{f}^{[b']}) \leq 2 \sqrt{ \epsilon } /3$ and $h_{1}(\hat{f}^{[b']}, f) \leq \sqrt{ \epsilon } /3$, so that by triangle inequality, $h_{1}(\hat{f}^{[b^\star]},f) \leq \sqrt{ \epsilon }$. Finally, choosing $q \geq C_{2} \log(1 /\gamma)$ makes the probability of this event $1-\gamma$.
\end{proof}

\subsection{Proof of \Cref{res: estimating W by PCA}}
\label{prf: estimating W by PCA}
We recall the statement below.
\paragraph{Statement} (Sample complexity of estimating $\eigmat$ by PCA).
Let $p$ be a zero-mean, log-concave probability density on $\bR^d$. Let $Y^{(1)}, \dots, Y^{(M)} \overset{\mathrm{iid}}{\sim} p$ be samples, and suppose \nameref{para: M1} is satisfied with eigenvalue separation $\delta>0$. Let $\epsilon >0$  and $0<\gamma \leq 1/e$, and define $\tilde M(d, \gamma):= d \log^4(1/\gamma) \log^2\left(C_0 \log^2(1/\gamma) \right)$. If
\begin{align*}
    M \geq C_1\tilde{M}(d, \gamma) \vee \left\{\frac{C_2\op{\VarX}^2 d}{\delta^2 \epsilon^2}\log^4(1/\gamma)\log^2 \left(\frac{C_3\,\op{\VarX}^2}{\delta^2\epsilon^2}\log^2(1/\gamma)\right) \right\}, 
\end{align*}
then with probability at least $1-\gamma$, PCA recovers vectors $\{\hat{w}_1, \dots, \hat{w}_d\}$ such that
$$ 
\norm{\hat{w}_i - w_i}_2 \leq \epsilon,
$$
up to permutations and sign-flips.
\begin{proof}
Since $w_i$ and $\hat{w}_i$ are eigenvectors of $\VarX$ and $\hat{\VarX}$ respectively, the Davis-Kahan theorem (see \citet{yuUsefulVariantDavis2015}, or Theorem 4.5.5 of \citet{vershynin2018high}) gives the bound
\begin{align*}
    \norm{\hat{w}_i - w_i}_2 \leq \frac{2^{3/2}}{\delta} \op{\hat{\VarX} - \VarX},
\end{align*}
for some permutation and choice of sign-flips of $w_1, \dots, w_d$. Hence, the problem boils down to covariance estimation of log-concave random vectors \cite{adamczak2010quantitative}.

To bring the problem into an isotropic setting, define random vectors $V^{(j)} := \VarX^{-1/2} Y^{(j)}$ for $j = 1, \dots, M$, such that $\bE V^{(j)} {V^{(j)}}^T = \mathbf{I}$, and note that these still have log-concave densities. Let $C,c >0$ be absolute constants.
By Theorem 4.1 of \citet{adamczak2010quantitative}, for any $\tilde{\epsilon}\in (0,1)$ and $t\geq 1$, if
    \begin{align*}
        M \geq C d \,\frac{t^4}{\tilde{\epsilon}^2} \log^2 \left(\frac{2t^2}{\tilde{\epsilon}^2}\right),
    \end{align*}
then with probability at least $1 - e^{-ct \sqrt{d}}$, it holds that 
    \begin{align}
        \label{eq: V covariance bound}
        \normbig{\frac{1}{M} \sum_{j=1}^M V^{(j)} {V^{(j)}}^T - \mathbf{I}}_{\mathrm{op}} \leq \tilde{\epsilon}.
    \end{align}
For a $\gamma \in (0,1/e]$, we have $\log(1/\gamma) \geq 1$ so that setting $t=C'\log(1/\gamma)$ for a large enough absolute constant $C'>0$ results in the failure probability being bounded above as $e^{-ct\sqrt{d}} \leq \gamma$. Hence, (\ref{eq: V covariance bound}) holds
with probability at least $1-\gamma$, provided 
    \begin{align*}
        M \geq \frac{C_1 d}{\tilde{\epsilon}^2} \log^4(1/\gamma) \log^2 \left(\frac{C_0}{\tilde{\epsilon}^2} \log^2(1/\gamma)\right).
    \end{align*}
Furthermore, letting $\tilde \epsilon \uparrow 1$ yields that
\begin{align*}
    \normbig{\frac{1}{M} \sum_{j=1}^M V^{(j)} {V^{(j)}}^T - \mathbf{I}}_{\mathrm{op}} \leq 1
\end{align*}
with probability at least $1-\gamma$ provided $M \geq C_1 \tilde M(d, \gamma)$. As a result, (\ref{eq: V covariance bound}) also extends to $\tilde\epsilon \geq 1$. Finally, letting $\tilde{\epsilon} = 2^{-3/2}\epsilon \delta\,\op{\VarX}^{-1}$ and adjusting the absolute constants gives the required result, since 
    \begin{align*}
        \norm{\hat{\VarX} - \VarX}_{\mathrm{op}} &= \normbig{\frac{1}{M} \sum_{j=1}^M Y^{(j)} {Y^{(j)}}^T - \VarX}_{\mathrm{op}} \\
        &= \normbig{\VarX^{1/2} \left( \frac{1}{M} \sum_{j=1}^M V^{(j)} {V^{(j)}}^T - \mathbf{I}\right) \VarX^{1/2}}_{\mathrm{op}} \\
        &\leq \norm{\VarX}_{\mathrm{op}} \tilde{\epsilon} \, \leq \frac{\delta}{2^{3/2}} \epsilon.
    \end{align*}

\end{proof}

\subsection{Proof of \Cref{res: W estimation ICA}}
\label{prf: W estimation ICA}
We recall the statement below.
\paragraph{Statement} 
Let $p$ be a zero-mean probability density on $\mathbb{R}^{d}$ satisfying the orthogonal independent components property. Suppose, additionally, that \nameref{para: M2} is satisfied with parameters $\mu_{4}, \mu_{9} >0$ and $\kappa \geq 1$. Given $\epsilon > 0$ and $3d^{-3} \leq \gamma < 1$, and samples $Y^{(1)}, \dots, Y^{(M)} \overset{\mathrm{iid}}{\sim}p$, if
\begin{equation*}
M \geq \frac{K' d}{\epsilon^2} \log (3 /\gamma) \,\vee\, K'' d^2 \log^2(3 /\gamma) \,\vee\, (3 /\gamma)^{8/9}
\end{equation*}
for constants $K', K'' >0$ depending on $\mu_{4}, \mu_{9}, \kappa$, then with probability at least $1-\gamma$, ICA recovers vectors $\{ \hat{w}_{1}, \dots, \hat{w}_{d} \}$ such that
\begin{equation*}
\left( \frac{1}{d} \sum_{i=1}^d\|\hat{w}_{i}-w_{i}\|_{2}^2 \right)^{1/2}  \leq \epsilon
\end{equation*}
for independent directions $w_{1}, \dots, w_{d}$ of $p$.

\begin{proof}
Given vectors $u,v \in \mathbb{S}^{d}$, \citet{auddyLargeDimensionalIndependent2023} consider discrepancy $\sin \angle(u, v)$ in their bounds. We can relate this to the distance $\|v-u\|_{2}$ up to a sign-flip. More precisely, pick a $\theta \in \{ -1,+1 \}$ such that $u \cdot \theta v \in [0,1]$. In this case,
\begin{equation*}
\begin{aligned}
\|\theta v- u\|_{2}^2 = 2(1-u\cdot\theta v) \leq 2\left( 1-(u \cdot \theta v)^2 \right) = 2 \sin^2 \angle(u,v).
\end{aligned}
\end{equation*}
Next, the algorithm of \citet{auddyLargeDimensionalIndependent2023} produces vectors $\hat{a}_{1},\dots,\hat{a}_{d} \in \mathbb{R}^d$ which estimate the columns of $\mathbf{A}=\mathbf{W}^T \mathbf{\Sigma}_{Z}^{1/2}$. Recall that the columns of $\mathbf{A}$ are simply scalings of independent directions $w_{1}, \dots, w_{d}$.
Rescaling as $\tilde{a}_{i}=\hat{a}_{i} /\|\hat{a}_{i}\|_{2}$ does not change the angles, and hence,
\begin{equation*}
\|\tilde{a}_{i}-w_{i}\|_{2} \leq \sqrt{ 2 } \sin \angle(\tilde{a}_{i}, w_{i}) = \sqrt{ 2 } \sin \angle(\hat{a}_{i}, w_{i})
\end{equation*}
(flipping the sign of $w_i$ as required).
To get orthonormal estimates $\hat{w}_{1}, \dots, \hat{w}_{d}$, we form a matrix $\tilde{\mathbf{A}}$ with columns $\tilde{a}_{1}, \dots, \tilde{a}_{d}$, compute the singular value decomposition $\tilde{\mathbf{A}}=\mathbf{U}\mathbf{D}\mathbf{V}^T$ and set $\hat{\mathbf{W}}^T=\mathbf{U}\mathbf{V}^T$, which is the closest orthogonal matrix to $\tilde{\mathbf{A}}$ in Frobenius norm. As a result,
\begin{equation*}
\begin{aligned}
\fro{\hat{\mathbf{W}}^T - \mathbf{W}^T} &\leq \fro{\mathbf{U}\mathbf{V}^T - \tilde{\mathbf{A}}} + \fro{\tilde{\mathbf{A}}- \mathbf{W}^T} \\
&\leq 2\,\fro{\tilde{\mathbf{A}}-\mathbf{W}^T} \\
&= 2\sqrt{ d }\,\textstyle{\left( d^{-1} \sum_{i=1}^d \|\tilde{a}_{i} - w_{i}\|_{2}^2\right)^{1/2} } \\
&\leq 2 \sqrt{ 2d } \,\textstyle{\left( d^{-1} \sum_{i=1}^{d} \sin^2 \angle(\hat{a}_{i}, w_{i}) \right)^{1/2} }.
\end{aligned}
\end{equation*}

Finally, we required a bound on the 9th moment of $S_{i}$ in Moment assumption M2, instead of on some $(8+\epsilon)$th moment, for the sake of simplicity. Using these facts, and setting $\delta$ in Theorem 3.4 of \citet{auddyLargeDimensionalIndependent2023} to $\gamma /3$, gives the desired form of the result.
\end{proof}

\subsection{Proof of \Cref{res: rotational stability of KL}}
\label{prf: rotational stability of KL}
We recall the statement below.
\paragraph{Statement}
Let $p$ be a zero-mean probability density on $\bR^d$, satisfying \nameref{para: S1}. Let $\Rpl \in \bR^{d\times d}$ be any orthogonal matrix. Then,
    \begin{align*}
        \mathrm{KL}(p \,||\, \Rpl^T_{\sharp}p) \leq [\nabla \varphi]_\alpha \, d^{(1+\alpha)/2} \, \op{\VarX}^{(1+\alpha)/2}\, \norm{\mathbf{I} - \Rpl}^{1+\alpha}_{\mathrm{op}}.
    \end{align*}

\begin{proof}
Recall that $\Rpl^T_{\sharp}p (x) = p(\Rpl x) = p \circ \Rpl (x)$. The KL divergence
\begin{align}
    \mathrm{KL}(p \, ||\, p \circ \Rpl) &= \int_{\bR^d} p(x) \left(\log p(x) - \log p(\Rpl x) \right) \, dx, \nonumber \\
    &= \int_{\bR^d} p(x) \left(\varphi(\Rpl x) - \varphi(x) \right) \, dx. 
\label{eq: KL varphi}
\end{align}
where $\varphi(x) = - \log p(x)$. Since KL is always non-negative, an upper bound suffices to show closeness between $p$ and $p\circ \Rpl$. 

Using smoothness assumption S1 i.e. $\varphi \in C^{1,\alpha}(\bR^d)$ with $[\nabla \varphi]_{\alpha}< \infty$,
we get a Taylor-like expansion
\begin{align}
    \varphi(y) - \varphi(x) &\leq \nabla \varphi(x) \cdot (y-x) +  [\nabla \varphi]_{\alpha} \norm{y-x}_2^{1+\alpha}.
    \label{eq: taylor-like}
\end{align}
for $x,y\in \bR^d$. Setting $y = \Rpl x$, and plugging (\ref{eq: taylor-like}) into (\ref{eq: KL varphi}) yields
\begin{align}
    &\int_{\bR^d} p(x) \left(\varphi(\Rpl x) - \varphi(x) \right) \, dx \nonumber \\
    &\leq \int_{\bR^d} \nabla \varphi(x) \cdot (\Rpl x - x) \,p(x)dx + [\nabla \varphi]_{\alpha} \int_{\bR^d} \norm{\Rpl x-x}_2^{1+\alpha}\, p(x)dx.
    \label{eq: smt bound}
\end{align}
The second term can be handled as 
\begin{align*}
    \int_{\bR^d} \norm{\Rpl x-x}_2^{1+\alpha}\, p(x)dx &\leq \norm{\mathbf{I} - \Rpl}_{\mathrm{op}}^{1+\alpha} \int_{\bR^d} \norm{x}_2^{1+\alpha} \, p(x)dx \\
    &=  \norm{\mathbf{I} - \Rpl}_{\mathrm{op}}^{1+\alpha} \, \bE_{X \sim p} \norm{X}_2^{1+\alpha} \\
    &\leq   \norm{\mathbf{I} - \Rpl}_{\mathrm{op}}^{1+\alpha} \, \left( \bE_{X \sim p} \norm{X}_2^2 \right)^{\frac{1+\alpha}{2}} \\
    &= \norm{\mathbf{I} - \Rpl}_{\mathrm{op}}^{1+\alpha} \, \left( \mathrm{tr}\,\VarX \right)^{\frac{1+\alpha}{2}},
\end{align*}
since $1 + \alpha \leq 2$.

Now consider the first term in (\ref{eq: smt bound}). Note that $\nabla p(x)=- \nabla\varphi(x)p(x)$, and as a result,
\begin{equation*}
\int_{\mathbb{R}^d} \nabla\varphi(x) \cdot (\mathbf{R}x-x)\, p(x)dx = \int_{\mathbb{R}^d} - \nabla p(x) \cdot (\mathbf{R}x-x)\,dx
\end{equation*}
is finite by the integrability conditions in \nameref{para: S1}. We use integration by parts, and write
\begin{equation*}
\begin{aligned}
\int_{\mathbb{R}^d} - \nabla p(x) \cdot (\mathbf{R}x-x)\,dx = \int_{\mathbb{R}^d}\,p(x) \nabla \cdot (\mathbf{R}-\mathbf{I})x\,dx + \lim_{\rho\to \infty} \int_{\partial \mathbb{B}_{\rho}} -p(x) \,(\mathbf{R}x-x)\cdot \nu(x) \,dS(x),
\end{aligned}
\end{equation*}
where $\mathbb{B}_{\rho}$ is the Euclidean ball of radius $\rho$ centered at the origin, $\nu(x)=x /\|x\|_{2}$ is the outward normal vector field on $\partial \mathbb{B}_{\rho}$, and $S$ is the $(d-1)$-dimensional surface area measure. We claim that the boundary term is zero; to see this, define
\begin{equation*}
Q(\rho) := \int_{\partial\mathbb{B}_{\rho}} \|x\|_{2}\,p(x)\,dS(x) = \int_{\mathbb{S}^{d-1}} p(\rho\theta)\,\rho^d\,dS(\theta)
\end{equation*}
for $\rho >0$, and note that $Q(\rho)$ controls the boundary term as
\begin{equation*}
\begin{aligned}
\left| \int_{\partial \mathbb{B}_{\rho}} -p(x) \,(\mathbf{R}x-x)\cdot \nu(x) \,dS(x) \right| &\leq \int_{\partial \mathbb{B}_{\rho}}\,p(x) \|\mathbf{R}x-x\|_{2}\,dS(x)\\
&\leq \op{\mathbf{I}-\mathbf{R}} \,Q(\rho).
\end{aligned}
\end{equation*}
It follows from the integrability conditions in \nameref{para: S1} that $Q$ and its derivative $Q'$ are integrable on $(0, \infty)$, and as a consequence, $\lim_{\rho \to \infty} Q(\rho)=0$ (see \Cref{res: boundary Q bound} for details).
% We use integration by parts, and claim that the boundary term is zero. To see this, define
% \begin{equation*}
% Q(\rho) := \int_{\partial\mathbb{B}_{\rho}} \|x\|_{2}\,p(x)\,dS(x) = \int_{\mathbb{S}^{d-1}} p(\rho\theta)\,\rho^d\,dS(\theta)
% \end{equation*}
% where $\rho >0$ and $\mathbb{B}_{\rho}$ is the Euclidean ball of radius $\rho$ centered at the origin. $Q(\rho)$ controls the boundary term, and we would like to show that $\lim_{\rho \to \infty} Q(\rho) = 0$. But this is a consequence of the facts that $Q$ is differentiable, and $Q$, $Q'$ are both integrable on $(0, \infty)$, which in turn follow from the integrability assumptions on $p$.

Hence,
\begin{align*}
    \int_{\bR^d} - \nabla p(x) \cdot (\Rpl x - x) \, dx &= \int_{\bR^d} p(x) \, \nabla \cdot (\Rpl - \mathbf{I})x \, dx \\
    &= \int_{\bR^d} p(x) \mathrm{tr}(\Rpl - \mathbf{I}) \, dx = \mathrm{tr}(\Rpl) - \mathrm{tr}(\mathbf{I}).
\end{align*}
But note that for any orthogonal $\Rpl$,
\begin{align*}
    \mathrm{tr}(\Rpl) = \sum_{i=1}^d e_i \cdot \Rpl e_i \leq \sum_{i=1}^d \norm{e_i}_2 \norm{\Rpl e_i}_2 = d = \mathrm{tr}(\mathbf{I}),
\end{align*}
where $e_1, \dots, e_d$ are the standard basis vectors in $\bR^d$. We conclude that the the first term in (\ref{eq: smt bound}) is non-positive, and put these bounds together with $\mathrm{tr}\,\VarX \leq d \op{\VarX}$ to get the desired result.
\end{proof}

\subsection{Proof of \Cref{res: linear stability of TV}}
\label{prf: linear stability of TV}
We recall the statement below.
\paragraph{Statement} Let $p$ be a zero-mean probability density on $\bR^d$, satisfying \nameref{para: S2}. Let $\mathbf{A}\in \bR^{d \times d}$  be invertible with $\op{\mathbf{A}^{-1}} \leq B$. Then,
\begin{align*}
\mathrm{TV}(p, \mathbf{A}_{\sharp}p) \equiv \frac{1}{2}
\norm{\mathbf{A}_{\sharp}p-p}_{L^1}\leq \frac{1}{2}\left[(1+B)\norm{\nabla p}_{L^1}\sqrt{d} + d\right]\op{\VarX}^{1/4}\,\op{\mathbf{I}-\mathbf{A}}^{1/2}.
\end{align*}

\begin{proof}
Let $q = \mathbf{A}_{\sharp}p$ be the transformed density. Consider the standard Gaussian density $g(x) =(2\pi)^{-d/2}\, e^{-\frac{1}{2}\norm{x}_2^2}$ on $\bR^d$, and for any $\epsilon >0$ (to be fixed later), denote the scaled version by $g_\epsilon(x) =\epsilon^{-d}g(x/\epsilon)$.  We can decompose
\begin{align}
\label{eq: smoothing decomp}
\norm{q-p}_{L^1} &\leq \norm{q-q*g_{\epsilon}}_{L^1} + \norm{q*g_{\epsilon}-p*g_{\epsilon}}_{L^1} + \norm{p*g_\epsilon-p}_{L^1}.
\end{align}
To bound the second term of (\ref{eq: smoothing decomp}), we use \Cref{res: total variation on smoothed densities} and \Cref{res: translational gaussian}, and get 
\begin{align}
\label{eq: Bound by W1}
\norm{q*g_{\epsilon}-p*g_{\epsilon}}_{L^1} \leq \frac{\sqrt{d}}{\epsilon}\,W_1(q,p).
\end{align}
The right-hand side is now a problem of stability in the Wasserstein-1 distance, and is handled by \Cref{res: stability in W1}. Noting that $\mathrm{tr}(\VarX) \leq \op{\VarX}d$, \Cref{res: stability in W1} together with (\ref{eq: Bound by W1}) give the bound
\begin{align*}
    \norm{q*g_{\epsilon}-p*g_{\epsilon}}_{L^1} \leq \frac{d}{\epsilon} \op{\VarX}^{1/2} \op{\mathbf{I}- \mathbf{A}},
\end{align*}
which concludes the analysis of the second term of (\ref{eq: smoothing decomp}). Now, we handle the first and third terms of (\ref{eq: smoothing decomp}), making use of the assumed smoothness (S2) of $p$.
\Cref{res: mollification error} directly bounds the third term of (\ref{eq: smoothing decomp}). 
To address the first term, we apply \Cref{res: mollification error} to $q(x) = |\det \mathbf{A}^{-1}| \,p(\mathbf{A}^{-1}x)$, noting that $\norm{\nabla q}_{L^1} \leq \op{\mathbf{A}^{-1}} \norm{\nabla p}_{L^1}$. This finally yields the bound
\begin{align*}
    \norm{q-p}_{L^1} \leq (1+\op{A^{-1}})\norm{\nabla p}_{L^1}\sqrt{d}\,\epsilon + \frac{d}{\epsilon}\op{\VarX}^{1/2}\,{\op{\mathbf{I}-\mathbf{A}}}.
\end{align*}
The following choice completes the proof of \Cref{res: linear stability of TV}.
\begin{equation*}
\epsilon = \op{\VarX}^{1/4}\,\op{\mathbf{I} - \mathbf{A}}^{1/2}.
\end{equation*}
\end{proof}

\subsection{Proof of \Cref{res: smooth sample complexity proposed}}
\label{prf: smooth sample complexity proposed}
We recall the statement below.
\paragraph{Statement (brief)}
Let $p$ be a zero-mean probability density on $\mathbb{R}^d$, satisfying the orthogonal independent components property, and let $\hat{p}$ be the proposed estimator. Then, for any $\epsilon>0$ and $0 < \gamma<1$, we have 
\begin{equation*}
h_{d}^2(\hat{p},p) \leq \epsilon
\end{equation*}
with probability at least $1-\gamma$ whenever
\begin{subequations}
\begin{equation*}
N \geq \sup_{s \in \mathbb{S}^{d-1}}N^*\left( \epsilon /4d,\, \gamma /2d;\, p_{s} \right),
\end{equation*}
and
\begin{equation*}
M \geq \begin{cases}
    M^*\left(\frac{1}{4} d^{-\frac{2+\alpha}{1+\alpha}}\,[\nabla \varphi]_{\alpha}^{- \frac{1}{1+\alpha}}\,\op{\mathbf{\Sigma}}^{-1/2}\,\epsilon^{\frac{1}{1+\alpha}}\,,\, \gamma/2; \,p \right) & \text{if S1 holds} \\
    M^*\left(4^{-4} \left[ \|\nabla p\|_{L^1} d^{7/4} + d^{9/4} \right]^{-2} \op{\mathbf{\Sigma}}^{-1/2} \epsilon^{2}\,,\, \gamma/2; \,p\right) & \text{if S2 holds}. 
\end{cases}
\end{equation*}
\end{subequations}

\begin{proof}
The inequality for $N$ is the same as in \Cref{res: generic sample complexity proposed}, and follows the same argument (see \ref{prf: generic sample complexity proposed}). Now consider the inequalities for $M$. Suppose \nameref{para: S1} holds. Then,
\begin{equation*}
\begin{aligned}
h_{d}^2(\hat{\mathbf{R}}_{\sharp}p, p) &\leq \frac{1}{2} [\nabla \varphi]_\alpha \, d^{(1+\alpha)/2} \, \op{\mathbf{\Sigma}}^{(1+\alpha)/2}\, \norm{\mathbf{I} - \hat{\mathbf{R}}}^{1+\alpha}_{\mathrm{op}} \\
& \leq \frac{1}{2} [\nabla \varphi]_{\alpha} \, d^{1+\alpha} \op{\mathbf{\Sigma}}^{(1+\alpha)/2}\,\left( d^{-1} \textstyle{\sum_{i=1}^d \|\hat{w}_{i}-w_{i}\|_{2}^2} \right)^{\frac{1+\alpha}{2}},
\end{aligned}
\end{equation*}
and similarly for $h_{d}^2\left( \hat{\mathbf{R}}^T_{\sharp}p, p \right)$. For any $\tilde{\epsilon}>0$, note that $M \geq M^*(\tilde{\epsilon}, \gamma /2; p)$ samples are sufficient to give
\begin{equation*}
\sqrt{d}\,h_d\left(\hat{\mathbf{R}}_{\sharp}p \, , \, p \right) + h_d\left(\hat{\mathbf{R}}^T_{\sharp}p \, , \, p \right) \leq  \frac{\sqrt{ d }+1}{2} d^{(1+\alpha)/2}\, [\nabla \varphi]_{\alpha}^{1/2}  \op{\mathbf{\Sigma}}^{(1+\alpha)/4}\,\tilde{\epsilon}^{(1+\alpha)/2}
\end{equation*}
with probability at least $1 - \gamma /2$. Setting
\begin{equation*}
\tilde{\epsilon} = \frac{1}{4} d^{-\frac{2+\alpha}{1+\alpha}}\,[\nabla \varphi]_{\alpha}^{- \frac{1}{1+\alpha}}\,\op{\mathbf{\Sigma}}^{-1/2}\,\epsilon^{\frac{1}{1+\alpha}}
\end{equation*}
then gives
\begin{equation*}
\mathbb{P}\left\{ \sqrt{d}\,h_d\left(\hat{\mathbf{R}}_{\sharp}p \, , \, p \right) + h_d\left(\hat{\mathbf{R}}^T_{\sharp}p \, , \, p \right) > \sqrt{ \epsilon } /2 \right\} \leq \gamma /2
\end{equation*}
as desired. Now, if \nameref{para: S2} holds, then
\begin{equation*}
\begin{aligned}
h_{d}^2(\hat{\mathbf{R}}_{\sharp}p, p) &\leq \left[ \|\nabla p\|_{L^1} \sqrt{ d } + d \right] \,d^{1/4}\,\op{\mathbf{\Sigma}}^{1/4}\,\left( d^{-1} \textstyle{\sum_{i=1}^d \|\hat{w}_{i}-w_{i}\|_{2}^2} \right)^{1/4}.
\end{aligned}
\end{equation*}
As before, we consider $\tilde{\epsilon}>0$ and note that $M \geq M^*(\tilde{\epsilon}, \gamma /2; p)$ samples are sufficient to give
\begin{equation*}
\sqrt{d}\,h_d\left(\hat{\mathbf{R}}_{\sharp}p \, , \, p \right) + h_d\left(\hat{\mathbf{R}}^T_{\sharp}p \, , \, p \right) \leq 2\, \left[ \|\nabla p\|_{L^1} d^{7/4} + d^{9/4} \right]^{1/2} \op{\mathbf{\Sigma}}^{1/8}\,\tilde{\epsilon}^{1/4}
\end{equation*}
with probability at least $1 - \gamma /2$. Setting
\begin{equation*}
\tilde{\epsilon} = 4^{-4} \left[ \|\nabla p\|_{L^1} d^{7/4} + d^{9/4} \right]^{-2} \op{\mathbf{\Sigma}}^{-1/2} \epsilon^{2}
\end{equation*}
concludes the proof.

\end{proof}

\subsection{Proof of \Cref{res: basic err bound misspecification}}
  \label{prf: basic err bound misspecification}
We recall the statement below.
\paragraph{Statement}
Let $P$ be a zero-mean distribution in $\mathcal{P}_{d}$, satisfying the orthogonal independent components property with independent directions $w_{1}, \dots, w_{d}$. Let $\hat{p}$ be the proposed estimator computed from samples $X^{(1)}, \dots, X^{(N)}, Y^{(1)}, \dots, Y^{(M)} \overset{\mathrm{iid}}{\sim}P$. We have the error bound
\begin{align*}
h_{d}(\hat{p}, \psi_{d}^*(P)) \leq \sqrt{ d } & \left( \max_{i \in [d]} h_{1}(\hat{p}_{\hat{w}_{i}}, \psi_{1}^*(P_{\hat{w}_{i}})) + \max_{i \in[d]} h_{1}(\psi_{1}^*(P_{\hat{w}_{i}}), \psi_{1}^*(P_{w_{i}})) \right)  \\
 & + h_{d}(\psi_{d}^*(\hat{\mathbf{R}}_{\sharp} P), \psi_{d}^*(P)),
\end{align*}
where $\hat{\mathbf{R}}= \hat{\mathbf{W}}^T \mathbf{W}$.

\begin{proof}
Since $P$ satisfies the orthogonal independent components property with independent directions $w_{1}, \dots, w_{d}$, it follows from Theorem 2 of \citet{samworth2012independent} that the density $\psi_{d}^*(P)$ factorizes as
\begin{equation*}
\psi_{d}^*(P) (x) = \prod_{i} \psi_{1}^*(P_{w_{i}})(w_{i}\cdot x).
\end{equation*}
Similar to the proof of \Cref{res: basic error bound}, we then decompose
\begin{equation*}
\begin{aligned}
h_{d}&(\hat{p}, \psi_{d}^*(P))U = h_{d}\left( \prod_{i} \hat{p}_{\hat{w}_{i}} \circ \pi_{\hat{w}_{i}}, \prod_{i}\psi_{1}^*(P_{w_{i}}) \circ \pi_{w_{i}} \right) \\
&\leq  \underbrace{ h_{d}\left( \prod_{i} \hat{p}_{\hat{w}_{i}} \circ \pi_{\hat{w}_{i}}, \prod_{i}\psi_{1}^*(P_{\hat{w}_{i}}) \circ \pi_{\hat{w}_{i}} \right) }_{ (\mathrm{I}) } + \underbrace{ h_{d}\left(\prod_{i}\psi_{1}^*(P_{\hat{w}_{i}}) \circ \pi_{\hat{w}_{i}}, \prod_{i}\psi_{1}^*(P_{{w}_{i}}) \circ \pi_{\hat{w}_{i}}  \right) }_{ (\mathrm{II}) } \\
&\phantom{=} \, + \underbrace{ h_{d}\left( \prod_{i}\psi_{1}^*(P_{{w}_{i}}) \circ \pi_{\hat{w}_{i}}, \prod_{i}\psi_{1}^*(P_{{w}_{i}}) \circ \pi_{{w}_{i}} \right) }_{ (\mathrm{III}) } .
\end{aligned}
\end{equation*}
By the tensorization argument of \Cref{lemma: h oracle}, 
\begin{equation*}
(\mathrm{I}) \leq \sqrt{ d }\,\max_{i \in [d]} h_{1}(\hat{p}_{\hat{w}_{i}}, \psi_{1}^*(P_{\hat{w}_{i}})).
\end{equation*}
An identical calculation also yields
\begin{equation*}
(\mathrm{II}) \leq \sqrt{ d }\,\max_{i \in [d]} h_{1}(\psi_{1}^*(P_{\hat{w}_{i}}), \psi_{1}^*(P_{w_{i}})).
\end{equation*}
Finally, noting that $\prod_{i} \psi_{1}^*(P_{w_{i}})\circ \pi_{\hat{w}_{i}}(x) = \prod_{i} \psi_{1}^*(P_{w_{i}})(\hat{w}_{i}\cdot x) = \psi_{d}^*(P)(\hat{\mathbf{R}}^Tx)$, we can rewrite
\begin{align*}
(\mathrm{I I I}) = h_{d}(\hat{\mathbf{R}}_{\sharp}\psi_{d}^*(P), \psi_{d}^*(P)).
\end{align*}
Since the log-concave projection $\psi_{d}^*$ commutes with affine maps \cite{dumbgen_ApproximationLogConcaveDistributions_2011}, we have that $\hat{\mathbf{R}}_{\sharp}\psi_{d}^*(P)=\psi_{d}^*(\hat{\mathbf{R}}_{\sharp}P)$ which concludes the proof.
\end{proof}

\subsection{Proof of \Cref{res: sample complexity misspecification} }
\label{prf: sample complexity misspecification}
We recall the statement below.
\paragraph{Statement (brief)}
Let $P$ be a zero-mean distribution in $\mathcal{P}_{d}$ satisfying the orthogonal independent components property. Suppose that \nameref{para: M2} is satisfied, $\eta_{P}>0$, and $L_{q} < \infty$ for some $q > 1$. Let $\hat{p}$ be the proposed estimator, invoking log-concave MLE and ICA. Then, for any $\epsilon>0$ and $6d^{-3} < \gamma<1$, we have 
\begin{equation*}
h_{d}^2(\hat{p},\psi_{d}^*(P)) \leq \epsilon
\end{equation*}
with probability at least $1-\gamma$ whenever
\begin{equation*}
\frac{N}{\log^{\frac{3q}{q-1}}N} \geq \left( K_{q} \sqrt{ \frac{L_{q}}{\eta_{P}} }\, \frac{8d^2}{\epsilon\gamma} \right)^{\frac{2q}{q-1}} ,
\end{equation*}
and
\begin{equation*}
M \geq \frac{K' C_{d}'' L_{1}^2}{\eta_{P}^2\, \epsilon^4} \log (6 /\gamma) \,\vee\, K'' d^2 \log^2(6 /\gamma) \,\vee\, (6 /\gamma)^{8/9}.
\end{equation*}

\begin{proof}
The proof follows the same pattern as that of \Cref{res: generic sample complexity proposed}. We use the error bound from \Cref{res: basic err bound misspecification}:
\begin{equation*}
\begin{aligned}
h_{d}(\hat{p}, \psi_{d}^*(P)) \leq& \underbrace{ \sqrt{ d } \max_{i \in [d]} h_{1}(\hat{p}_{\hat{w}_{i}}, \psi_{1}^*(P_{\hat{w}_{i}})) }_{ (\mathrm{I}) } + \underbrace{ \sqrt{ d }\,\max_{i \in[d]} h_{1}(\psi_{1}^*(P_{\hat{w}_{i}}), \psi_{1}^*(P_{w_{i}})) }_{ (\mathrm{II}) }  \\
&+ \underbrace{ h_{d}(\psi_{d}^*(\hat{\mathbf{R}}_{\sharp} P), \psi_{d}^*(P)) }_{ (\mathrm{III}) }.
\end{aligned}
\end{equation*}

First, we show that $\sqrt{ d }\,\max_{i \in [d]} h_{1}(\hat{p}_{\hat{w}_{i}}, \psi_{1}^*(P_{\hat{w}_{i}})) \leq \sqrt{ \epsilon } /2$ holds with probability at least $1-\gamma /2$, provided $N$ is large enough. By a conditioning argument on $\mathbf{Y}=(Y^{(1)}, \dots,Y^{(M)})$ as before, we can treat $\hat{w}_{1}, \dots, \hat{w}_{d}$ as deterministic for this part of the proof.

Define the empirical distribution $\hat{P}^X:= N^{-1} \sum_{j=1}^{N}\delta_{X^{(j)}}$, and denote it's $s$-marginal by $\hat{P}^X_{s}$ for $s \in \mathbb{S}^{d-1}$. Since we used log-concave MLE to estimate the marginals in this setting, we can express $\hat{p}_{\hat{w}_{i}}=\psi_{1}^*(\hat{P}^X_{\hat{w}_{i}})$. Noting that $\hat{P}^X_{\hat{w}_{i}}$ is itself an empirical distribution of $P_{\hat{w}_{i}}$ for each $i=1,\dots,d$, Theorem 5 of \citet{barberLocalContinuityLogconcave2021} gives the bound
\begin{equation*}
\mathbb{E}\,h_{1}^2(\hat{p}_{\hat{w}_{i}}, \psi_{1}^*(P_{\hat{w}_{i}})) \leq K_{q} \sqrt{ \frac{L_{q}}{\eta_{P}} } \, \frac{\log^{3/2} N}{N^{\frac{1}{2} - \frac{1}{2q}}},
\end{equation*}
for a constant $K_{q}>0$ depending only on $q$. (In applying the theorem, we have used the simple facts that $(\mathbb{E}\,|\hat{w}_{i}\cdot X|^{q})^{1/q} \leq L_{q}$ and $\eta_{P_{\hat{w}_{i}}} \geq \eta_{P}$ for all $i$). A direct application of Markov's inequality also gives the tail bound
\begin{equation*}
\mathbb{P}\left\{ h_{1}^2(\hat{p}_{\hat{w}_{i}}, \psi_{1}^*(P_{\hat{w}_{i}})) \geq \tilde{\epsilon}  \right\} \leq \frac{1}{\tilde{\epsilon}} \cdot K_{q} \sqrt{ \frac{L_{q}}{\eta_{P}} } \, \frac{\log^{3/2} N}{N^{\frac{1}{2} - \frac{1}{2q}}}
\end{equation*}
for any $\tilde{\epsilon}>0$. The right-hand side is bounded above by some $\tilde{\gamma} \in (0,1)$ provided 
\begin{equation*}
\frac{N}{\log^{\frac{3q}{q-1}}N} \geq \left( K_{q} \sqrt{ \frac{L_{q}}{\eta_{P}} }\, \frac{1}{\tilde{\epsilon}\tilde{\gamma}} \right)^{\frac{2q}{q-1}}.
\end{equation*}
We set $\tilde{\epsilon}=\epsilon /4d$ and $\tilde{\gamma}=\gamma /2d$, and use a union bound over $i=1,\dots,d$ to finally get
\begin{equation*}
\mathbb{P}\left\{ \sqrt{ d }\,\max_{i \in [d]} h_{1}(\hat{p},p) > \sqrt{ \epsilon } /2 \right\} \leq \gamma /2.
\end{equation*}
Now consider term (II). By Theorem 2 of \citet{barberLocalContinuityLogconcave2021}, we have that
\begin{equation*}
h_{1}(\psi_{1}^*(P_{\hat{w}_{i}}), \psi_{1}^*(P_{w_{i}})) \leq C_{1} \left[ \frac{W_{1}(P_{\hat{w}_{i}}, P_{w_{i}})}{\eta_{P}} \right]^{1/4}
\end{equation*}
for an absolute constant $C_{1}>0$, where one can further simplify
\begin{equation*}
W_{1}(P_{\hat{w}_{i}}, P_{w_{i}}) \leq \mathbb{E}_{X \sim P}\,|\hat{w}_{i}\cdot X - w_{i}\cdot X| \leq \left( \mathbb{E}_{X \sim P} \|X\|_{2} \right)\, \|\hat{w}_{i}-w_{i}\|_{2} = L_{1} \|\hat{w}_{i}-w_{i}\|_{2}.
\end{equation*}
Theorem 2 of \citet{barberLocalContinuityLogconcave2021} can also be used to bound term (III) as
\begin{equation*}
h_{d}(\psi_{d}^*(\hat{\mathbf{R}}_{\sharp}P), \psi^*_{d}(P)) \leq C_{d} \left[ \frac{W_{1}(\hat{\mathbf{R}}_{\sharp}P, P)}{\eta_{P}} \right]^{1/4},
\end{equation*}
where $C_{d}>0$ depends only on $d$, and we note that
\begin{equation*}
W_{1}(\hat{\mathbf{R}}_{\sharp}P, P) \leq \mathbb{E}_{X \sim P}\,\|\hat{\mathbf{R}}X - X\|_{2} \leq \left( \mathbb{E}_{X \sim P}\|X\|_{2} \right) \, \op{\mathbf{I} - \hat{\mathbf{R}}} = L_{1} \fro{\hat{\mathbf{W}}-\mathbf{W}}.
\end{equation*}
Putting these inequalities together gives
\begin{equation*}
	(\mathrm{I I}) + (\mathrm{I I I}) \leq C_{d}' \left( \frac{L_{1}}{\eta_{P}} \right)^{1/4} \left( d^{-1} \textstyle{\sum_{i=1}^d \|\hat{w}_{i}-w_{i}\|_{2}^2} \right)^{1/8}
\end{equation*}
for some $C_{d}'>0$ depending only on $d$. Applying \Cref{res: W estimation ICA} allows us to conclude that $(\mathrm{I I})+ (\mathrm{I I I}) \leq \sqrt{ \epsilon } /2$ with probability at least $1-\gamma /2$ provided
\begin{equation*}
M \geq \frac{C_{d}'' K' L_{1}^2}{\eta_{P}^2 \,\epsilon^4} \log (6 /\gamma) \,\vee\, K'' d^2 \log^2(6 /\gamma) \,\vee\, (6 /\gamma)^{8/9}
\end{equation*}
for a suitably chosen $C_{d}''>0$ depending only on $d$.
\end{proof}

\newpage
\section{Auxiliary lemmas}
\label{sec: aux lemmas}
Here, we prove the various lemmas invoked in \Cref{sec: app A}.

\begin{lemma}
    \label{lem: hellinger 1d bounded by d}
Let $p(x)= \prod_{i=1}^d p_{w_i}(w_i \cdot x)$, where each $p_{w_i}$ is a univariate probability density, and consider any other probability density $q:\bR^d \to [0, \infty)$. Then,
    \begin{align*}
        h_1^2 (p_{w_k}, q_{w_k}) \leq h_d^2(p,q),
    \end{align*}
    for $k\in [d]$.
\end{lemma}
The above lemma bounds the 1-dimensional Hellinger distance between marginals by the $d$-dimensional Hellinger distance between the full densities. This is a special case of the \emph{data processing inequality} for $f$-divergences, but we nevertheless provide a proof below.

\begin{proof}
Writing
    \begin{align*}
        1 - h_d^2(p,q) = \int_{\bR^d}\sqrt{p(x)q(x)} dx = \int_{\bR^d}\sqrt{\prod_{i}p_{w_i}(w_i \cdot x)\, q(x)} dx,
    \end{align*}
    and changing variables $z = \eigmat x$ gives
    \begin{align*}
        \int_{\bR^d}\sqrt{\prod_{i}p_{w_i}(z_i)\, \bar{q}(z)} dz,
    \end{align*}
    where $\bar{q}(z) := q(\eigmat^Tz)$. Splitting the components of $z$ as $z_k$ and $z_{{[d]\backslash k}}$ allows us to rewrite the above expression as an iterated integral, and use Cauchy-Schwarz on the inner integral:
    \begin{align}
        & \int_{\bR} \int_{\bR^{d-1}} \sqrt{\prod_{i\neq k}p_{w_i}(z_i)\, \bar{q}(z_k, z_{{[d]\backslash k}})} \, dz_{{[d]\backslash k}} \,\, \sqrt{p_{w_k}(z_k)}\,dz_k \nonumber \\
        & \leq \int_{\bR} \left\{ \int_{\bR^{d-1}}\prod_{i\neq k}p_{w_i}(z_i) \, dz_{{[d]\backslash k}} \right\}^{1/2} \,
        \left\{ \int_{\bR^{d-1}} \bar{q}(z_k, z_{{[d]\backslash k}})\, dz_{{[d]\backslash k}} \right\}^{1/2} \, \sqrt{p_{w_k}(z_k)}\,dz_k. \nonumber
    \end{align}
    Since each $p_i$ is a probability density,
    \begin{align*}
       \int_{\bR^{d-1}}\prod_{i\neq k}p_{w_i}(z_i) \, dz_{{[d]\backslash k}} =  \prod_{i\neq k} \int_{\bR} p_{w_i}(z_i)\, dz_i = 1.
    \end{align*}
    On the other hand, $\bar{q}$ is being marginalized and 
    \begin{align*}
        \bar{q}_{e_k}(z_k) := \int_{\bR^{d-1}} \bar{q}(z_k, z_{{[d]\backslash k}})\, dz_{{[d]\backslash k}}
    \end{align*}
    is the marginal of $\bar{q}$ along the direction $e_k$. Noting that $\bar{q}_{e_k} = q_{w_k}$ and chaining together the above inequalities gives the desired result:
    \begin{align*}
        1 - h_d^2(p,q) \leq \int_{\bR} \sqrt{q_{w_k}(z_k) p_{w_k}(z_k)}\, dz_k = 1 - h_1^2(p_{w_k}, q_{w_k}).
    \end{align*}
\end{proof}

\begin{lemma}[Jensen gap for univariate log-concave densities]
    \label{res: jensen gap univariate log-concave}
Let $f:\mathbb{R} \to[0, \infty)$ be any univariate log-concave probability density, and suppose $X \sim f$. Then,
\begin{equation*}
c\,\sqrt{ \mathrm{Var}X }  \leq \mathbb{E}|X-\mathbb{E}X| \leq \sqrt{ \mathrm{Var}X }
\end{equation*}
holds for $c = 1 /1600$.
\end{lemma}
\begin{proof}
Define the `standardized' random variable $Z := \sigma^{-1}(X-a)$ where $a$ and $\sigma$ are the mean and standard deviation of $X$ respectively. Denote the density of $Z$ by $g$, and note that $g$ is an isotropic log-concave density. By Theorem 5.14(a) of \citet{geom_logconcave_boosting} specialized to the case $d=1$, we have the lower bound $g(z) \geq 2^{-9|z|}g(0)$ for $|z| \leq 1 /9$, which can be used to estimate
\begin{equation*}
\mathbb{E}|Z| = \int_{\mathbb{R}}|z|g(z)\,dz \geq \int_{-1 /9}^{1/9} |z|g(z) \,dz \geq g(0) \int_{- 1 /9}^{1/9}|z| 2^{-9|z|}\,dz \geq \frac{g(0)}{200} .
\end{equation*}
Additionally, by Lemma 5.5(b) of \citet{geom_logconcave_boosting}, $g(0) \geq \frac{1}{8}$. We conclude that $\mathbb{E}|Z| \geq c$ for $c = 1 /1600$, which can be transformed to give
\begin{equation*}
\mathbb{E}|X - \mathbb{E}X| = \sigma\, \mathbb{E}|Z| \geq c\sigma = c \sqrt{ \mathrm{Var}X }.
\end{equation*}
The upper bound on $\mathbb{E}|X-\mathbb{E}X|$ is immediate from Jensen's inequality or the Cauchy-Schwarz inequality.
\end{proof}

\begin{lemma}[lower bound on $\eta$ for a log-concave density]
\label{res: lower bd eta}
Let $p$ be any log-concave density on $\mathbb{R}^d$, with mean $\mu$ and covariance matrix $\mathbf{\Sigma}$. Then,
\begin{equation*}
\eta_{p} \equiv \inf_{s \in \mathbb{S}^{d-1}} \mathbb{E}_{X \sim p} \,|s \cdot (X- \mu)| \geq c\, \lambda_{\min}(\mathbf{\Sigma})^{1/2}
\end{equation*}
holds for $c=1 /1600$.     
\end{lemma}

\begin{proof}
Let $s_{*} \in \arg \min_{s \in \mathbb{S}^{d-1}} \mathbb{E}_{X \sim p}\,|s \cdot (X-\mu)|$ (which exists by compactness and continuity). 
For $X \sim p$, note that $s_{*}\cdot X$ has a univariate log-concave density $p_{s_{*}}$, so that one can apply \Cref{res: jensen gap univariate log-concave} to conclude that
\begin{align*}
\eta_{p}=\mathbb{E}\,|s_{*} \cdot X - s_{*}\cdot\mu| \geq c \, \mathrm{Var}(s_{*}\cdot X)^{1/2} \geq c\,\lambda_{\min}(\mathbf{\Sigma})^{1/2}.
\end{align*}
\end{proof}

\begin{lemma}[Stability in $W_1$]
\label{res: stability in W1}
Let $p$ be a probability density on $\bR^d$ with mean zero and covariance matrix $\VarX$, and let $\mathbf{A}\in \bR^{d\times d}$.  Then, 
$$
W_1(\mathbf{A}_{\sharp}p,\, p) \leq \sqrt{\mathrm{tr}(\VarX)}\,\op{\mathbf{A}-\mathbf{I}}. 
$$
\end{lemma}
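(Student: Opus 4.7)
The plan is to construct an explicit coupling between $p$ and $\mathbf{A}_{\sharp}p$ using the random variable $X\sim p$ and its transformation $\mathbf{A}X\sim \mathbf{A}_\sharp p$. Since $W_1$ is defined as an infimum over all couplings, any particular coupling immediately yields an upper bound.

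First I would let $X\sim p$, so that $(X,\mathbf{A}X)$ has marginals $p$ and $\mathbf{A}_\sharp p$. The corresponding joint law $\gamma$ is therefore in $\Gamma(p,\mathbf{A}_\sharp p)$. Plugging this coupling into the Kantorovich formulation yields
\begin{align*}
W_1(\mathbf{A}_\sharp p,\, p) \;\leq\; \int_{\bR^d\times \bR^d} \|x-y\|_2\, d\gamma(x,y) \;=\; \bE \|\mathbf{A}X - X\|_2.
\end{align*}

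Next I would use the operator norm bound $\|(\mathbf{A}-\mathbf{I})X\|_2 \leq \op{\mathbf{A}-\mathbf{I}}\,\|X\|_2$, followed by Cauchy--Schwarz (or equivalently concavity of the square root via Jensen's inequality) to pass from $\bE\|X\|_2$ to $\sqrt{\bE\|X\|_2^2}$. Since $p$ has mean zero, $\bE\|X\|_2^2 = \mathrm{tr}(\VarX)$, giving
\begin{align*}
\bE \|\mathbf{A}X - X\|_2 \;\leq\; \op{\mathbf{A}-\mathbf{I}}\,\bE \|X\|_2 \;\leq\; \op{\mathbf{A}-\mathbf{I}}\,\sqrt{\mathrm{tr}(\VarX)},
\end{align*}
which is exactly the claim.

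There is no real obstacle here; the lemma is essentially a one-line consequence of choosing the synchronous coupling $(X,\mathbf{A}X)$, together with the elementary operator-norm and Cauchy--Schwarz estimates. The only mild subtlety is remembering to invoke the mean-zero hypothesis when identifying $\bE \|X\|_2^2$ with $\mathrm{tr}(\VarX)$ rather than $\mathrm{tr}(\VarX) + \|\bE X\|_2^2$.
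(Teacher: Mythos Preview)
Your proof is correct and follows essentially the same approach as the paper: the paper uses the Monge formulation (choosing the transport map $T=\mathbf{A}$) while you use the equivalent Kantorovich formulation (choosing the coupling $(X,\mathbf{A}X)$), and both then apply the operator-norm bound followed by $\bE\|X\|_2\leq\sqrt{\mathrm{tr}(\VarX)}$.
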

\begin{proof}
A direct calculation yields
\begin{align*}
W_1(\mathbf{A}_{\sharp}p,\, p) &\leq \inf_{\substack{T:\bR^d\to\bR^d\\ \textrm{measurable}}} \left\{\int_{\bR^d}\norm{T(x)-x}_2\, p(x)dx\,:\,T_{\sharp}p = \mathbf{A}_{\sharp}p\right\} \\
&\leq \int_{\bR^d} \norm{\mathbf{A}x - x}_2\,p(x)dx \\
&\leq \op{\mathbf{I} - \mathbf{A}}\int_{\bR^d}\norm{x}_2\,p(x)dx \\
&\leq \op{\mathbf{I} - \mathbf{A}} \,\bE_{X\sim p} \norm{X}_2\,.
\end{align*}
The result then follows because $\bE \norm{X}_2 \leq \sqrt{\bE \norm{X}_2^2} = \sqrt{\mathrm{tr}(\VarX)}$.
\end{proof}

\begin{lemma}[Controlling the boundary integral]
\label{res: boundary Q bound}
Let $p$ be a continuously differentiable probability density on $\mathbb{R}^d$, such that $\int_{\mathbb{R}^d} \|x\|_{2}p(x)\,dx < \infty$ and $\int_{\mathbb{R}^d} \|x\|_{2} \|\nabla p(x)\|_{2}\,dx < \infty$. Then, the function
\begin{equation*}
Q(\rho) := \int_{\partial\mathbb{B}_{\rho}} \|x\|_{2}\,p(x)\,dS(x) = \int_{\mathbb{S}^{d-1}} p(\rho\theta)\,\rho^d\,dS(\theta)
\end{equation*}
is differentiable on $(0,\infty)$, and moreover, $Q$ and $Q'$ are integrable on $(0,\infty)$. Consequently, we also have that $\lim_{\rho \to \infty}Q(\rho)=0$.
\end{lemma}
\begin{proof}
Using spherical integration, we immediately get that
\begin{equation*}
\int_{0}^\infty \left| Q(\rho) \right| \,d\rho = \int_{0}^\infty \int_{\mathbb{S}^{d-1}}\,p(\rho\theta) \rho^d\,dS(\theta)\,d\rho = \int_{\mathbb{R}^d}\,p(x) \|x\|_{2}\,dx < \infty.
\end{equation*}
To get $Q'$, we differentiate under the integral sign as
\begin{equation*}
Q'(\rho) = \int_{\mathbb{S}^{d-1}}\,\left\{\rho^d\, (\nabla p(\rho\theta)\cdot\theta) + (\rho^{d-1}d)\,p(\rho\theta)\right\} \,dS(\theta),
\end{equation*}
which is justified because the differentiated integrand is continuous in $(\rho,\theta)$, and as a result, can be uniformly bounded in magnitude by a constant over all $\theta \in \mathbb{S}^{d-1}$ and all $\rho$ in any compact subset of $(0,\infty)$. Now, integrating $\left| Q' \right|$ gives
\begin{equation*}
\begin{aligned}
\int_{0}^\infty \left| Q'(\rho) \right| \,d\rho &\leq \int_{0}^\infty \int_{\mathbb{S}^{d-1}}\, \left| \rho^d\, (\nabla p(\rho\theta)\cdot\theta) + (\rho^{d-1}d)\,p(\rho\theta) \right| \,dS(\theta)\,d\rho \\
&= \int_{\mathbb{R}^d} \left| \nabla p(x)\cdot x + p(x)d \right| \,dx \\
&\leq \int_{\mathbb{R}^d} \|x\|_{2}\|\nabla p(x)\|_{2}\,dx + d < \infty.
\end{aligned}
\end{equation*}

What remains, finally, is to show that $\lim_{\rho \to \infty}Q(\rho)=0$. The integrability of $Q'$ implies that the limit
\begin{equation*}
\lim_{\rho \to \infty}\left( Q(\rho)-Q(1) \right) =\int_{1}^\infty Q'(r)\,dr
\end{equation*}
exists finitely. But if $L:= \lim_{\rho\to \infty}Q(\rho)$ were a non-zero number, then $Q$ would not be integrable over $(0,\infty)$, leading to a contradiction. Hence, we may conclude that $L=0$ as required.
\end{proof}

For the next few lemmas, recall that $g(x) =(2\pi)^{-d/2}\, e^{-\frac{1}{2}\norm{x}_2^2}$ and $g_\epsilon(x) =\epsilon^{-d}g(x/\epsilon)$.

\begin{lemma}[$L^1$ bound on smoothed densities]
\label{res: total variation on smoothed densities}
Let $p$ and $q$ be probability densities on $\bR^d$, and denote by $W_1(p,q)$ the Wasserstein-1 distance between them. For any $\epsilon >0$, it holds that
$$
\norm{p*g_\epsilon - q*g_{\epsilon}}_{L^1} \leq \sup_{\substack{s,t\in \bR^d \\ s\neq t}}\frac{\norm{\tau_s g_\epsilon - \tau_tg_\epsilon}_{L^1}}{\norm{s-t}_2}\,W_1(p,\,q),
$$
where $\tau_s h(x) := h(x-s)$ denotes a translation for any function $h$ on $\bR^d$.
\end{lemma}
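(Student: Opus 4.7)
The natural strategy is to use the coupling (primal) formulation of the Wasserstein-1 distance, which says that
$W_1(p,q) = \inf_{\pi} \int \|y-z\|_2 \, d\pi(y,z)$, where the infimum runs over couplings $\pi$ of $p$ and $q$. The key observation is that convolving with $g_\epsilon$ turns the pointwise difference $p-q$ into an expression that can be written in terms of translates of $g_\epsilon$ along each pair of coupled points, which is exactly where the numerator on the right-hand side comes from.

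Concretely, the plan is as follows. First, I would fix an arbitrary coupling $\pi$ of $p$ and $q$ and write
\begin{align*}
(p*g_\epsilon - q*g_\epsilon)(x) = \int [g_\epsilon(x-y) - g_\epsilon(x-z)] \, d\pi(y,z),
\end{align*}
using that $\pi$ has $p$ and $q$ as its marginals. Then I would take absolute values, move them inside the integral via the triangle inequality, integrate in $x$, and swap the order of integration by Fubini (everything is nonnegative, so this is justified). The inner $x$-integral is then exactly $\|\tau_y g_\epsilon - \tau_z g_\epsilon\|_{L^1}$, so I obtain
\begin{align*}
\|p*g_\epsilon - q*g_\epsilon\|_{L^1} \leq \int \|\tau_y g_\epsilon - \tau_z g_\epsilon\|_{L^1} \, d\pi(y,z).
\end{align*}

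Next I would bound the integrand by the supremum factor in the statement times $\|y-z\|_2$, pull the constant out of the integral, and finally take the infimum over couplings $\pi$ to arrive at the claimed bound with $W_1(p,q)$. There is no real obstacle: the argument is essentially a direct verification, and the only minor subtlety is making sure Fubini applies (which it does since $g_\epsilon \in L^1$) and that the restriction $y \neq z$ in the supremum is harmless (on the diagonal $y=z$ the integrand vanishes, so those pairs contribute zero).
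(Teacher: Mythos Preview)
Your proposal is correct and follows essentially the same argument as the paper: represent the difference via an arbitrary coupling, push the absolute value and the $x$-integral inside by Fubini to obtain $\int \|\tau_y g_\epsilon - \tau_z g_\epsilon\|_{L^1}\,d\pi(y,z)$, bound by the supremum times $\|y-z\|_2$, and take the infimum over couplings. Your remarks on Fubini and on the diagonal $y=z$ are fine and in fact slightly more careful than the paper's version.
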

\begin{proof}
Denote by $\Gamma(p, q)$ the set of all couplings (or transport plans) between $p$ and $q$, and let $\gamma \in \Gamma(p,q)$. Note that one can express 
\begin{align*}
p*g_\epsilon(x) - q*g_\epsilon(x) &= \int_{\bR^d}g_\epsilon(x-s)p(s)ds - \int_{\bR^d}g_\epsilon(x-t)q(t)dt \\
&= \int_{\bR^d \times \bR^d} \left(g_\epsilon(x-s) - g_\epsilon(x-t)\right)\,d\gamma(s,t).
\end{align*}
A direct calculation involving an exchange of integrals and a Holder bound gives
\begin{align*}
\norm{p*g_\epsilon - q*g_{\epsilon}}_{L^1} &= \int_{\bR^d}\left\vert p*g_\epsilon(x) - q*g_\epsilon(x)\right\vert \,dx \\
&\leq \int_{\bR^d} \int_{\bR^d \times \bR^d}\left\vert g_\epsilon(x-s) - g_\epsilon(x-t)\right\vert \,d\gamma(s,t)\,dx \\
&= \int_{\bR^d \times \bR^d} \int_{\bR^d} \left\vert \tau_sg_\epsilon(x) - \tau_tg_\epsilon(x) \right\vert\,dx\,d\gamma(s,t) \\
&= \int_{\bR^d \times \bR^d} \norm{\tau_sg_\epsilon - \tau_tg_\epsilon}_{L^1}\,d\gamma(s,t) \\
&= \int_{\bR^d \times \bR^d} \frac{\norm{\tau_sg_\epsilon - \tau_tg_\epsilon}_{L^1}}{\norm{s-t}_2}\,\norm{s-t}_2\,d\gamma(s,t) \\
&\leq \sup_{\substack{s,t\in \bR^d \\ s\neq t}}\frac{\norm{\tau_s g_\epsilon - \tau_tg_\epsilon}_{L^1}}{\norm{s-t}_2} \int_{\bR^d \times \bR^d} \norm{s-t}_2 \, d\gamma(s,t).
\end{align*}
Taking an infimum over all $\gamma \in \Gamma(p,q)$ gives the desired result.

\end{proof}
\begin{lemma}[Translational stability of Gaussian kernels in $L^1$]
\label{res: translational gaussian}
Let $s, t \in \bR^d$, and $\epsilon>0$. Then,
$$
\norm{\tau_s g_\epsilon - \tau_tg_\epsilon}_{L^1} \leq  \frac{\sqrt{d}}{\epsilon} \norm{s-t}_2.
$$
\end{lemma}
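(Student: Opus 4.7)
The plan is to use the fundamental theorem of calculus along the straight line segment connecting $s$ and $t$ in order to express the pointwise difference $g_\epsilon(x-s) - g_\epsilon(x-t)$ as an integral of directional derivatives of $g_\epsilon$. Concretely, setting $\gamma(u) = ut + (1-u)s$ for $u\in [0,1]$, the chain rule gives
\begin{equation*}
g_\epsilon(x-s) - g_\epsilon(x-t) = -\int_0^1 \nabla g_\epsilon(x - \gamma(u))\cdot (t-s)\, du.
\end{equation*}
Taking absolute values, integrating in $x$, swapping the order of integration via Fubini, and using the translation invariance of Lebesgue measure (so that $\int_{\bR^d} \norm{\nabla g_\epsilon(x-\gamma(u))}_2\, dx = \norm{\nabla g_\epsilon}_{L^1}$ independently of $u$) yields
\begin{equation*}
\norm{\tau_s g_\epsilon - \tau_t g_\epsilon}_{L^1} \leq \norm{s-t}_2\, \norm{\nabla g_\epsilon}_{L^1}.
\end{equation*}

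The remaining step is to evaluate (or at least upper bound) $\norm{\nabla g_\epsilon}_{L^1}$. A rescaling $y = x/\epsilon$ gives $\norm{\nabla g_\epsilon}_{L^1} = \epsilon^{-1}\norm{\nabla g}_{L^1}$ since one factor of $\epsilon^{-1}$ comes from differentiating and the Jacobian cancels the $\epsilon^{-d}$. For the unit-scale Gaussian, the identity $\nabla g(x) = -x\, g(x)$ lets us rewrite $\norm{\nabla g}_{L^1} = \int_{\bR^d} \norm{x}_2 g(x)\, dx = \bE\norm{X}_2$ for $X \sim \mathcal{N}(0,\mathbf{I})$. Jensen's inequality then gives $\bE\norm{X}_2 \leq \sqrt{\bE\norm{X}_2^2} = \sqrt{d}$, so $\norm{\nabla g_\epsilon}_{L^1} \leq \sqrt{d}/\epsilon$, and combining with the previous display concludes the proof.

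No step here is genuinely hard: the argument is a translation-to-derivative reduction followed by a scaling calculation. The only mildly delicate point is keeping the powers of $\epsilon$ straight in the rescaling $\nabla g_\epsilon(x) = \epsilon^{-d-1}\nabla g(x/\epsilon)$, since both the prefactor $\epsilon^{-d}$ in the definition of $g_\epsilon$ and the factor $\epsilon^{-1}$ from differentiating contribute; after the change of variables, the Jacobian $\epsilon^d$ exactly absorbs the former, leaving the clean $1/\epsilon$ that produces the claimed bound.
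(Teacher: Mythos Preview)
Your proof is correct and follows essentially the same approach as the paper: a fundamental-theorem-of-calculus reduction to $\norm{\nabla g_\epsilon}_{L^1}$, followed by the computation $\norm{\nabla g_\epsilon}_{L^1} = \epsilon^{-2}\,\bE_{X\sim N(0,\epsilon^2 \mathbf{I})}\norm{X}_2 \leq \sqrt{d}/\epsilon$ (you do the latter via a rescaling to the standard Gaussian, the paper differentiates $g_\epsilon$ directly, but the two are equivalent). There is a harmless sign slip in your displayed chain-rule identity---the right-hand side should carry the opposite sign---but since you immediately pass to absolute values this has no effect on the argument.
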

\begin{proof}
Fix some $x\in \bR^d$ and consider the smooth function $[0,1]\ni \xi \mapsto g_\epsilon(x-s + \xi(s-t))$ where $\xi$ linearly interpolates between $x-s$ and $x-t$. This lets us write
$$
g_\epsilon(x-t) - g_\epsilon(x-s) = \int_0^1 \nabla g_\epsilon(x-s + \xi(s-t))\cdot (s-t) \, d\xi.
$$
Integrating, we get that
\begin{align*}
\int_{\bR^d} \left\vert g_\epsilon(x-t) - g_\epsilon(x-s) \right\vert \, dx &\leq \int_{\bR^d} \int_0^1 \norm{\nabla g_\epsilon(x-s + \xi(s-t))}_2\,\norm{s-t}_2 \, d\xi \, dx \\
&\leq \int_0^1 \norm{s-t}_2 \int_{\bR^d} \norm{\nabla g_\epsilon(x-s + \xi(s-t))}_2 \,dx \, d\xi \\
&=  \int_{\bR^d} \norm{\nabla g_\epsilon(x)}_2 \,dx \, \norm{s-t}_2.
\end{align*}
What remains is to bound the integral $\int_{\bR^d} \norm{\nabla g_\epsilon(x)}_2 \,dx$, and this is straightforward. Noting that 
$$
\frac{\partial g_\epsilon}{\partial x_j}(x) = -\frac{x_j}{\epsilon^2}g_\epsilon(x),
$$
we get
\begin{align*}
\int_{\bR^d}\norm{\nabla g_\epsilon(x)}_2\,dx &= \frac{1}{\epsilon^2}\int_{\bR^d}\norm{x}_2\,g_\epsilon(x)\,dx
= \frac{1}{\epsilon^2}\,\bE_{X\sim N(0,\epsilon^2)}\norm{X}_2
\leq \frac{\sqrt{d}}{\epsilon},
\end{align*}
which completes the proof of the lemma.

\end{proof}

\begin{lemma}[$L^1$ error due to mollification]
\label{res: mollification error}
Let $p$ be a probability density on $\bR^d$ satisfying \nameref{para: S2}. Then, for any $\epsilon>0$,
$$
\norm{p*g_\epsilon - p}_{L^1} \leq \norm{\nabla p}_{L^1}\, \sqrt{d}\,\epsilon.
$$
\end{lemma}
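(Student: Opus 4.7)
The plan is to use the classical mollification identity together with the assumed $L^1$-integrability of $\nabla p$. The main idea is: writing $p*g_\epsilon(x)-p(x)=\int_{\bR^d}(p(x-y)-p(x))\,g_\epsilon(y)\,dy$, we interpolate $p$ along the segment from $x$ to $x-y$ via
\[
p(x-y)-p(x)=-\int_0^1 \nabla p(x-ty)\cdot y\,dt,
\]
which is valid even for weak gradients after a standard smooth-approximation argument (any $p$ with $\nabla p\in L^1$ lies in $W^{1,1}(\bR^d)$ and can be approximated in $W^{1,1}$ by smooth functions, so the identity extends by passing to the limit). Taking norms in the integrand gives the pointwise bound
\[
|p*g_\epsilon(x)-p(x)|\le \int_{\bR^d}\int_0^1\|\nabla p(x-ty)\|_2\,\|y\|_2\,dt\,g_\epsilon(y)\,dy.
\]

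Next, I would integrate in $x$ and use Fubini to swap the order of the $x$, $y$, and $t$ integrals. The inner $x$-integral $\int_{\bR^d}\|\nabla p(x-ty)\|_2\,dx$ is translation-invariant in $x$ and equals $\|\nabla p\|_{L^1}$, independently of $t$ and $y$. The $t$-integral then contributes a factor of $1$, leaving
\[
\|p*g_\epsilon-p\|_{L^1}\le \|\nabla p\|_{L^1}\int_{\bR^d}\|y\|_2\,g_\epsilon(y)\,dy.
\]

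Finally, to bound $\int_{\bR^d}\|y\|_2\,g_\epsilon(y)\,dy=\bE_{Y\sim N(0,\epsilon^2 \mathbf{I})}\|Y\|_2$, I apply Jensen's (or Cauchy--Schwarz): $\bE\|Y\|_2\le \sqrt{\bE\|Y\|_2^2}=\sqrt{d}\,\epsilon$. Chaining the bounds gives the desired $\|p*g_\epsilon-p\|_{L^1}\le \|\nabla p\|_{L^1}\sqrt{d}\,\epsilon$.

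The only mildly delicate step is justifying the interpolation identity when $p$ is only assumed to have a weak gradient; this is handled by mollifying $p$ itself, applying the identity to the smooth approximation (where it holds classically), and passing to the limit in $L^1$. Everything else is routine Fubini and a Gaussian moment computation, so I do not expect a genuine obstacle.
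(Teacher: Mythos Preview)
Your proposal is correct and follows essentially the same approach as the paper: both use the interpolation identity $p(x-y)-p(x)=-\int_0^1 \nabla p(x-ty)\cdot y\,dt$, translation invariance of $\|\nabla p\|_{L^1}$, the Gaussian moment bound $\bE\|Y\|_2\le\sqrt{d}\,\epsilon$, and a smooth approximation in $W^{1,1}$ to handle weak gradients. The only cosmetic difference is that the paper first isolates the intermediate bound $\|\tau_y p - p\|_{L^1}\le \|\nabla p\|_{L^1}\|y\|_2$ before integrating against $g_\epsilon$, whereas you apply Fubini to all three integrals at once.
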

\begin{proof}
We first relate the approximation error above to the translational stability in $L^1$:

\begin{align*}
\norm{p*g_\epsilon - p}_{L^1} &= \int_{\bR^d} \left\lvert \int_{\bR^d} \left(p(x-y) - p(x)\right) g_\epsilon(y)\,dy\right\rvert \,dx \\
&\leq \int_{\bR^d} \int_{\bR^d}|\tau_yp(x) - p(x)|\,dx\,g_\epsilon(y)\,dy \\
&= \int_{\bR^d} \norm{\tau_yp - p}_{L^1}\,g_\epsilon(y)\,dy.
\end{align*}
This translational stability $\norm{\tau_yp - p}_{L^1}$ is controlled via smoothness. Suppose first that $p \in \mathcal{C}^1(\bR^d)$, so that for $x,y \in \bR^d$,
$$
p(x-y) - p(x) = \int_0^1 \nabla p(x - \xi y)\cdot (-y)\,d\xi.
$$
Integrating gives
\begin{align*}
\norm{\tau_yp - p}_{L^1} &\leq \int_{\bR^d}\int_0^1 \norm{\nabla p(x - \xi y)}_2\,\norm{y}_2\,d\xi \, dx \\
&= \int_0^1 \norm{y}_2 \int_{\bR^d} \norm{\nabla p(x - \xi y)}_2\, dx\, d\xi \\
&= \int_{\bR^d} \norm{\nabla p(x)}_2\,dx\,\norm{y}_2 = \norm{\nabla p}_{L^1}\,\norm{y}_2.
\end{align*}
If $p\notin \mathcal{C}^1(\bR^d)$, one can use the fact that $p$ lives in the Sobolev space $\mathcal{W}^{1,1}(\bR^d)$ to get an approximating sequence $p_k\in \mathcal{C}^1(\bR^d) \cap \mathcal{W}^{1,1}(\bR^d)$ satisfying $\norm{p_k - p}_{L^1}\to 0$ and $\norm{\nabla p_k - \nabla p}_{L^1}\to 0$ \cite{HWdensity}.  This allows us to extend the bound
$$
\norm{\tau_y p - p}_{L^1} \leq \norm{\nabla p}_{L^1} \norm{y}_2
$$
to all $p\in \mathcal{W}^{1,1}(\bR^d)$. Combining, we get
\begin{align*}
\norm{p*g_\epsilon - p}_{L^1} &\leq \int_{\bR^d} \norm{\tau_yp - p}_{L^1}\,g_\epsilon(y)\,dy
\leq \norm{\nabla p}_{L^1} \int_{\bR^d}\norm{y}_2 \, g_\epsilon(y)\,dy
\leq \norm{\nabla p}_{L^1}\, \sqrt{d}\,\epsilon.
\end{align*}

\end{proof}
\newpage

\section{Computational details and further numerical results}
\label{sec: app B}

\subsection{Monte Carlo integration for computing squared Hellinger distances}
\label{subsec: app monte carlo}
Recall that the squared Hellinger between densities $p$ and $q$ on $\bR^d$ is defined by the integral
\begin{align}
    h_d^2(p,q) := \frac{1}{2} \int_{\bR^d} \left(\sqrt{q(x)} - \sqrt{p(x)} \right)^2 dx.
    \label{eq: hellinger def}
\end{align}
If $d$ is larger than 4, computing the integral by discretizing on a grid can prove prohibitively expensive. Instead, one can use simple Monte Carlo integration to approximate \eqref{eq: hellinger def}. Notice that
\begin{align*}
    h_d^2(p,q) = \frac{1}{2} \int_{\bR^d} \left( \sqrt{q(x)/p(x)} - 1\right)^2 p(x) dx = \bE_{X\sim p} \, \frac{1}{2} \left( \sqrt{q(X)/p(X)} - 1\right)^2,
\end{align*}
where the expectation on the right-hand side can be approximated using an empirical average. We simulate samples $S^{(1)}, S^{(2)}, \dots, S^{(K)}\overset{\mathrm{iid}}{\sim} p$, and compute
\begin{align*}
    \frac{1}{K} \sum_{k=1}^K \frac{1}{2} \left( \sqrt{q(S^{(k)})/p(S^{(k)})} - 1\right)^2
\end{align*}
which converges almost surely to $h_d^2(p,q)$ as $K\to \infty$. In practice, we use $K=10000$. Additionally, we repeat the procedure 50 times and ensure that the resulting spread is not too large.

\subsection{The EM algorithm and the re-sampling heuristic}
\label{subsec: EM resampling}
Recall the finite mixture densities discussed in \Cref{subsec: mixtures}
\begin{align*}
    p(x) = \sum_{k=1}^K \pi_k p_k(x),
\end{align*}
and consider samples $X^{(1)}, X^{(2)}, \dots, X^{(n)}\overset{\mathrm{iid}}{\sim} p$.
We briefly describe the EM algorithm following \citet{cule2010maximum}. Given current iterates of the mixture proportions $\tilde\pi_1, \dots, \tilde\pi_K$ and component densities $\tilde p_1, \dots, \tilde p_K$, compute the posterior probabilities
\begin{align}
    \label{eq: posterior prob EM}
    \tilde\theta_{jk} \leftarrow \frac{\tilde\pi_k \tilde p_k(X^{(j)})}{\sum_{\ell=1}^K \tilde\pi_\ell \tilde p_\ell(X^{(j)})}.
\end{align}
Then, for each component $k\in \{1, \dots, K\}$, update the density $\tilde p_k$ by maximizing a weighted likelihood over a suitable class of densities $\mathcal{F}$ (such as log-concave densities):
\begin{align}
    \label{eq: weighted ML max EM}
    \tilde p_k \leftarrow \argmax_{q_k\in \mathcal{F}} \sum_{j=1}^n \tilde\theta_{jk} \log q_k(X^{(j)}).
\end{align}
Finally, update the mixture proportions as
\begin{align}
    \label{eq: mix prop EM}
    \tilde\pi_k \leftarrow \frac{1}{n} \sum_{j=1}^n \tilde\theta_{j,k}
\end{align}
for each $k$. These three step can be iterated until convergence.

When combining the EM algorithm with LC-IC, \eqref{eq: posterior prob EM}
 and \eqref{eq: mix prop EM} remain unchanged. Only the maximum likelihood step \eqref{eq: weighted ML max EM} needs to be replaced, where one needs to ``fit" a density to samples weighted by $\tilde\theta_{jk}$.
 Since weighted samples are not immediately compatible with our sample splitting scheme, we propose the following heuristic re-sampling scheme.
From the observed samples $X^{(1)}, X^{(2)}, \dots, X^{(n)}$, construct the weighted empirical distribution 
\begin{align*}
     \tilde\nu_k := \frac{\sum_{j=1}^n \tilde\theta_{jk}\, \delta_{X^{(j)}}}{\sum_{j=1}^n \tilde\theta_{jk}} 
\end{align*}
for each $k$. Notice that $\tilde\nu_k$ depends on the current iterates. Next, (computationally) generate iid samples from $\tilde\nu_k$ (hence the term \emph{re-sampled}), and feed them into \Cref{alg: main algo}, using $M$ of those re-sampled samples for the unmixing matrix estimation stage, and $N$ for the marginal estimation stage. The output of \Cref{alg: main algo} is then the updated iterate of $\tilde p_k$. Unlike the sample splitting scheme discussed before, the re-sampling heuristic involves no compromise between $M$ and $N$; one can generate as many samples from $\tilde\nu_k$ as they like. The experiments in \Cref{subsec: mixtures} used $N = M = 4n$. 

\begin{figure}[!b]
    \centering
    \includegraphics[width=0.9\textwidth]{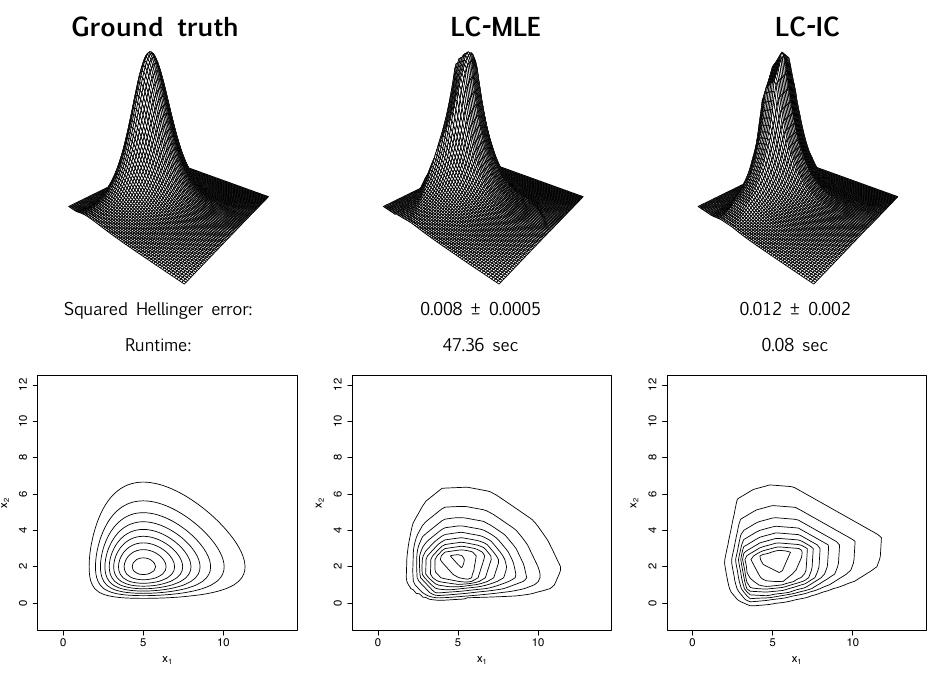}
    \caption{Comparing density estimation performance of LC-IC with LC-MLE on simulated Gamma-distributed data in $d=2$. The ground truth and estimated densities are visualized in the top row, and the corresponding level curves are plotted in the bottom row.}
    \label{fig:compare-visually-gamma}
\end{figure}

However, it is important to note that we do not have a theoretical backing for this heuristic yet. Another issue in this strategy is the generation of spuriously zeros values of $\tilde\theta_{jk}$, which can cause division-by-zero errors and likelihoods of $-\infty$. We address this by adding a small positive number such as $10^{-7}$ to the zero values of  $\tilde\theta_{jk}$.

\subsection{Comparison of performance on Gamma-distributed data}
\label{subsec: gamma comparison tests}
Here, we repeat the experiments from \Cref{subsec: gaussian comparison tests}, but for Gamma distributed data. Consider the bivariate case first. Let $X = \eigmat^T Z$ with $\eigmat = \mathbf{I}$ and $Z=(Z_1, Z_2)$, such that $Z_1 \sim \mathrm{Gamma}(6,1)$ and $Z_2 \sim \mathrm{Gamma}(3,1)$. With $n=1000$ iid samples of $X$ generated, equally split $M=N=500$, and compute the LC-IC estimate using PCA for unmixing matrix estimation and \texttt{logcondens} for marginal estimation. From the same $n=1000$ samples, also compute the LC-MLE estimate with \texttt{LogConcDEAD}. \Cref{fig:compare-visually-gamma} shows the ground truth and estimated densities together with their contour plots, and lists the estimation errors in squared Hellinger distance along with the algorithm runtimes.

Then, we conduct systematic tests similar to \Cref{subsec: gaussian comparison tests} for $d=2,3,4$ and various $n$. Set $X = \eigmat^T Z$, where $\eigmat = \mathbf{I}$ and $Z = (Z_1, \dots, Z_d)$, with $Z_i \sim \mathrm{Gamma}(6 - (i-1), 1)$. PCA is used for estimating $\eigmat$. Each experiment is repeated 5 times, and the average metrics are computed. \Cref{fig:gamma-curves} plots the estimation errors in squared Hellinger distance, as well as the algorithm runtimes, for both LC-MLE and LC-IC. The results are consistent with those from \Cref{subsec: gaussian comparison tests}.

\begin{figure}[h]
    \centering
    \includegraphics[width=\textwidth]{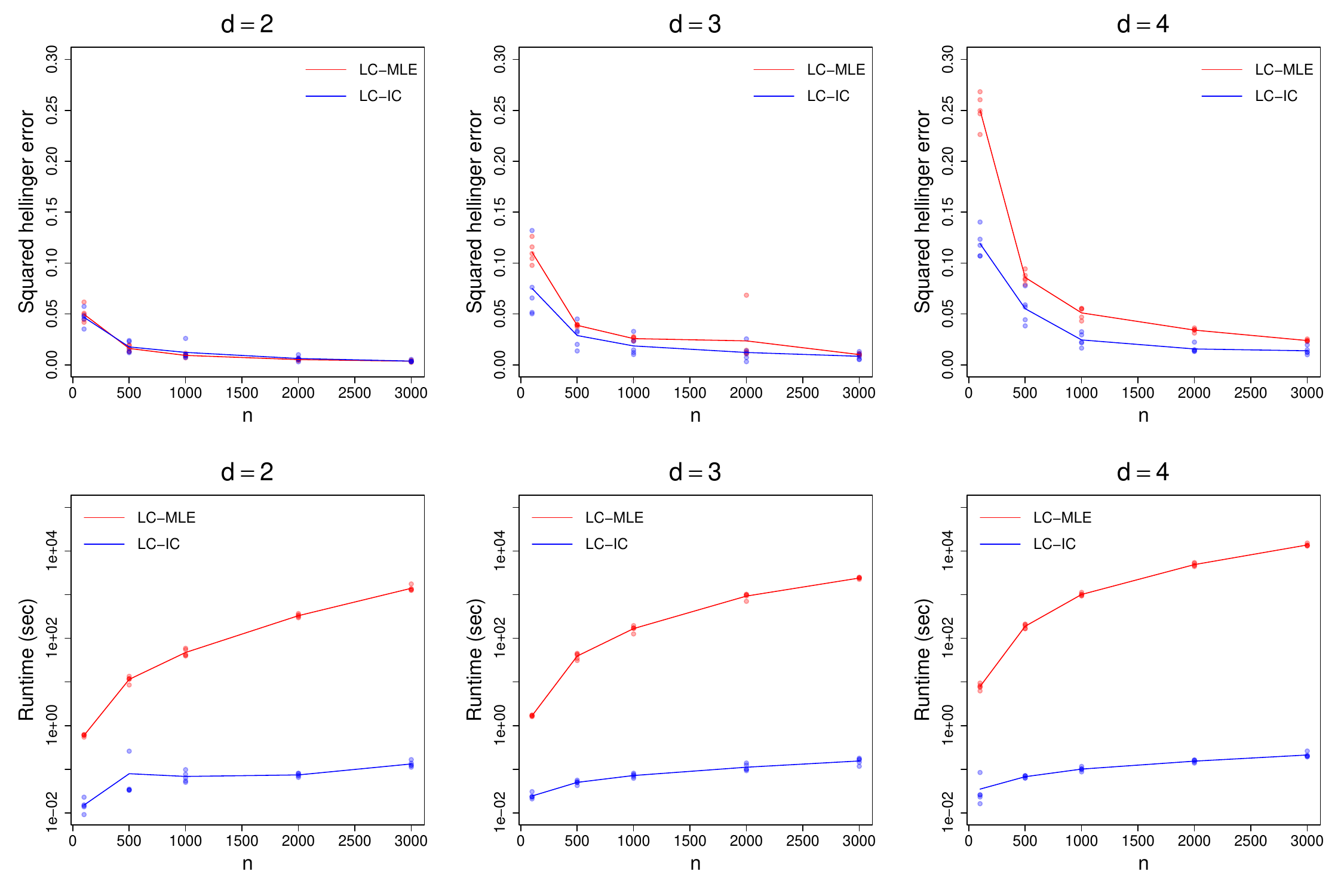}
    \caption{Comparing estimation errors in squared Hellinger distance (top row) and algorithm runtimes (bottom row) of LC-IC with LC-MLE, on simulated Gamma distributed data. The dots correspond to independent experiments, and the lines represent averages. Here, $n$ is the number of samples, and $d$ is the dimension.}
    \label{fig:gamma-curves}
\end{figure}

% Finally, we test LogConICA initialized with LC-IC on simulated $\mathrm{Unif}([-1,1]^d)$ data. The results in \Cref{fig: LogConICA init} show this scheme to have the best performance.

\end{document}